\newtheorem{theorem}{Theorem}[section]
\newtheorem*{theorem*}{Theorem}
\newtheorem{lemma}{Lemma}[section]
\newtheorem{corollary}{Corollary}[section]
\theoremstyle{remark}
\newtheorem{remark}{Remark}[section]
\DeclareMathOperator*{\argmin}{argmin}
\DeclareMathOperator{\sign}{sign}
\DeclareMathOperator{\diag}{diag}
\title{Randomized Kaczmarz with geometrically smoothed momentum}
\author[S. J. Alderman]{Seth J. Alderman}
\address{Department of Mathematics, University of Maryland, College Park, Maryland}
\email{ald@umd.edu}
\author[R. W. Luikart]{Roan W. Luikart}
\address{Department of Mathematics, Oregon State University, Corvallis, Oregon}
\email{luikartr@oregonstate.edu}
\author[N. F. Marshall]{Nicholas F. Marshall}
\address{Department of Mathematics, Oregon State University, Corvallis, Oregon}
\email{marsnich@oregonstate.edu}
\keywords{Momentum, randomized Kaczmarz, stochastic gradient descent}
\subjclass[2020]{Primary 65F10; Secondary 37N30, 60H25, 65F20}
\begin{document}

\begin{abstract}
This paper studies the effect of adding geometrically smoothed momentum to the randomized Kaczmarz algorithm, which is an instance of stochastic gradient descent on a linear least squares loss function.  We prove a result about the expected error in the direction of singular vectors of the matrix defining the least squares loss. We present several numerical examples illustrating the utility of our result and pose several questions.
\end{abstract}

\maketitle

\section{Introduction}
\subsection{Introduction} \label{sec:intro}
Let $A \in \mathbb{R}^{m \times n}$ be a tall ($m \ge n$) matrix,  $b \in \mathbb{R}^m$ be a column vector, and $f : \mathbb{R}^n \rightarrow \mathbb{R}$ be the least squares loss function
\begin{equation} \label{lossfunc}
f(x) = \frac{1}{2} \|A x - b\|_2^2 = \frac{1}{2} \sum_{i=1}^m ( \langle a_i, x \rangle - b_i)^2 = \sum_{i=1}^m f_i(x),
\end{equation}
where $\|\cdot\|_2$ is the $\ell^2$-norm, $\langle \cdot, \cdot \rangle$ is the standard inner product, $a_i^\top$ is $i$-th row of $A$, $b_i$ is the $i$-th entry of $b$, and $f_i(x) := (1/2) (\langle a_i, x \rangle - b_i)^2$.
For a positive probability vector $p \in \mathbb{R}^m$,
stochastic gradient descent, with batch size 1, is the iteration
\begin{equation} \label{sgdformula}
x_{k+1} = x_k - \alpha_k \nabla \frac{1}{p_{i_k}}  f_{i_k}(x_{k}) = x_k - \alpha_k \frac{\langle a_{i_k}, x_k \rangle - b_{i_k}}{p_{i_k}} a_{i_k},
\end{equation}
where $\alpha_k$ is the learning rate, and $i_k$ is chosen independently and randomly from $\{1,\ldots,m\}$ such that each $i$ is chosen with probability $p_i$. Setting $p_i = \|a_i\|_2^2/\|A\|_F^2$ and $\alpha_k= 1/\|A\|_F^2$, where $\|\cdot\|_F$ is the Frobenius norm, results in the randomized Kaczmarz algorithm
\begin{equation} \label{kaczmarz}
x_{k+1} = x_k  +  \frac{ b_{i_k} - \langle a_{i_k}, x_k \rangle}{\|a_{i_k}\|_2^2} a_{i_k},
\end{equation}
see \cite{Needell2015}.
The randomized Kaczmarz algorithm has a geometric interpretation: at iteration $k$, we perform an orthogonal projection of $x_k$ on the affine hyperplane $\{ y \in \mathbb{R}^n: \langle a_{i_k}, y \rangle = b_{i_k} \}$ defined by the $i_k$-th equation. If $A x = b$ is a consistent linear system of equations, meaning that there exists a solution $x$ satisfying $\langle a_i, x \rangle = b_i$ for all $i \in \{1,\ldots,m\}$, then the randomized Kaczmarz algorithm has the following convergence rate result established in 2009 by Strohmer and Vershynin: by  \cite[Theorem 2]{StrohmerVershynin2009} we have
\begin{equation} \label{kaczmarzresult}
\mathbb{E} \|x_k - x\|_2^2 \le ( 1 - \eta)^k \|x - x_0\|_2^2,
\end{equation}
where $\eta := \sigma_n^2/\|A\|_F^2$, where $\sigma_n$ denotes the smallest singular value of $A$. In 2021, Steinerberger \cite{Steinerberger2021} proved several results about how the error decays in different directions. Let $v_l$ be the right singular vector of $A$ associated with the $l$-th largest singular value $\sigma_l$ of $A$. Then
by \cite[Theorem 1.1]{Steinerberger2021} we have
\begin{equation} \label{kaczmarzstef}
\mathbb{E} \langle x_k - x, v_l \rangle  = ( 1 - \eta_{ l})^k \langle x_0 - x, v_l \rangle,
\end{equation}
where $\eta_l := \sigma_l^2/\|A\|_F^2$.
 Moreover, the direction in which the vectors approach the solution remains roughly constant: By \cite[Theorem 1.3]{Steinerberger2021}, we have
 \begin{equation} \label{stefanthm3}
\mathbb{E} \left\langle \frac{x_k - x}{\|x_k - x\|_2}, \frac{x_{k+1} - x}{\|x_{k+1} - x\|_2} \right\rangle^2 = 1 - \frac{1}{\|A\|_F^2} \left\| A \frac{x_k - x}{\|x_k - x\|_2} \right\|_2^2,
\end{equation}
assuming that $x_k,x_{k+1} \not = x$. Moreover, Steinerberger poses the following question based on these results:
\begin{quote}
``Once $x_k$ is mainly comprised of small singular vectors, its direction does
not change very much anymore  \ldots Could this property be used for
convergence acceleration?''  \cite[Page 4]{Steinerberger2021}.
\end{quote}

Our main result addresses this question: We show that adding geometrically smoothed momentum to the randomized Kaczmarz algorithm can accelerate convergence in the direction of small singular vectors. 
More precisely, we define the randomized Kaczmarz with geometrically smoothed momentum (KGSM) algorithm, see \eqref{eq:our-method}, and establish Theorem \ref{thm1}, which extends
\eqref{kaczmarzstef} to a setting with geometrically smoothed momentum. Our method is interpretable and illustrated by several numerical examples in \S \ref{numerics}.

Before going into the details of the algorithm and results, we present a numerical example 
to illustrate the accelerated convergence and interesting dynamics of KGSM \eqref{eq:our-method} compared to the randomized Kaczmarz \eqref{kaczmarz}, see Figure \ref{fig01}.

\begin{figure}[h!]
    \centering
    \includegraphics[width=.65\textwidth]{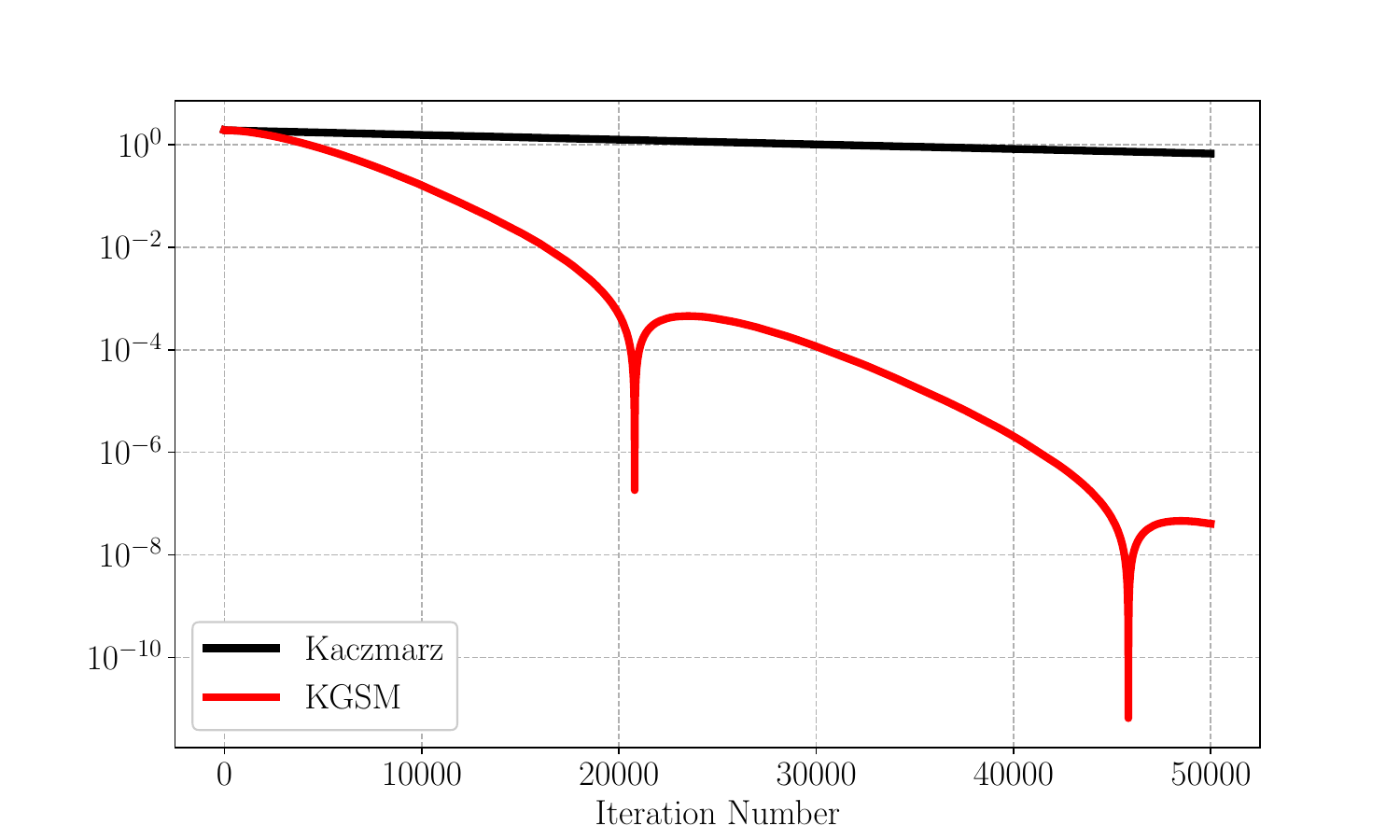}
    \caption{The error $|\langle x_k - x, v_n \rangle|$ in the direction of the smallest singular vector $v_n$ for randomized Kaczmarz \eqref{kaczmarz} and KGSM \eqref{eq:our-method} (see \S \ref{numericscomplex} for a precise description of this numerical example).}
    \label{fig01}
\end{figure}


\subsection{Motivation}
We are motivated by Polyak's Heavy Ball Momentum \cite{polyak1964}, which is the iteration
\begin{equation} \label{polyak}
x_{k+1} = x_k - \alpha_k \nabla f(x_k)  + \beta_k (x_k - x_{k-1}),
\end{equation}
where $f$ is the loss function; the name momentum comes from interpreting $\beta_k$ and $x_k - x_{k-1}$  as mass and velocity, respectively. One potential idea is to translate \eqref{polyak} directly into the following modified randomized Kaczmarz algorithm
\begin{equation} \label{beta0}
\left\{
\begin{array}{ccl}
   x_{k+1}& = & x_k +\displaystyle\frac{{b}_{i_k} - \langle x_k, a_{i_k} \rangle}{||a_{i_k}||^2_2}a_{i_k}   +M y_{k} \\[10pt]
   y_{k+1}& = & x_{k+1}-x_k,
\end{array} \right.
\end{equation}
where $y_k$ represents velocity and $M$ represents mass. However, 
numerical results, see  \S \ref{heavyballmomentumbatchsize1fails},
and theoretical results, see \cite{bollapragada2023fast,Kidambi2018}, indicate that Heavy Ball Momentum with batch size $1$  is not effective; in particular, \cite{bollapragada2023fast} explains:
\begin{quote}
``We study minibatching in the randomized Kaczmarz method as a necessary algorithmic structure for unlocking the fast convergence rate of HBM. \ldots Minibatch-HBM with a batch size of $B = 1$ provably fails to achieve faster convergence than SGD'' 
\cite[Page 5]{bollapragada2023fast}.
\end{quote}

Roughly speaking, the issue is that the momentum term is too noisy. One potential way to address this issue is to consider a minibatch version of Heavy Ball Momentum, which is studied by Bollapragada, Chen, and Ward \cite{bollapragada2023fast}, who prove that the convergence is accelerated when the minibatch size is large enough but at an increased computational cost per iteration, where minibatch size refers to the number of equations used to form the stochastic gradient, see \cite[Eq. 1.2]{bollapragada2023fast}.

In this paper, we take a different approach: instead of using a larger minibatch, we geometrically smooth the momentum term. For $\beta \in [0,1)$ and $M \in [0,1]$, we define
Kaczmarz with geometrically smoothed momentum (KGSM) by
\begin{equation} \label{eq:our-method}
\left\{
\begin{array}{ccl}
   x_{k+1}& = & x_k +\displaystyle\frac{{b}_{i_k} - \langle x_k, a_{i_k} \rangle}{||a_{i_k}||^2_2}a_{i_k}   +M y_{k} \\[10pt]
   y_{k+1}& = & \beta y_k + (1-\beta)(x_{k+1}-x_k) .
\end{array} \right.
\end{equation}
Note that setting $\beta = 0$ results in \eqref{beta0}, while setting $\beta$ close to $1$ results in a high level of smoothing. Our main result is an analog of \eqref{kaczmarzstef} 
which has an improved convergence rate for the signed error in the direction of small singular vectors; see Theorem \ref{thm1}. We demonstrate numerically that this expected signed error in the direction of the singular vectors is an effective model for the absolute numerical error in the direction of the singular vectors 
in some situations. Furthermore, we pose several questions about the dynamics of KGSM in \S \ref{discussion}.

\subsection{Related work}
The Kaczmarz algorithm first appeared in the literature in a 1937 paper of Kaczmarz \cite{Kaczmarz1937}. It was rediscovered 
in 1970 in the context of electron microscopy under the name algebraic reconstruction techniques (ART) \cite{GordonBenderHerman1970}. 
In 2009, Strohmer and Vershynin \cite{StrohmerVershynin2009} established the linear convergence result \eqref{kaczmarzresult} for the randomized Kaczmarz algorithm for consistent linear systems of equations; this analysis was extended to noisy linear systems by Needell  \cite{Needell2010}. Moreover, the block Kaczmarz methods corresponding to minibatch stochastic gradient descent have been considered by several authors, see \cite{bollapragada2023fast, derezinski2023sharp, Necoara2019,needell2014paved, Needell2015,
Needell2015b}.

In 2023, Marshall and Mickelin \cite{Marshall2023}, considered the problem of determining an optimal learning rate schedule $\alpha_k$ for the randomized Kaczmarz algorithm 
$$
x_{k+1} = x_k + \alpha_k \frac{\tilde{b}_{i_k}  - \langle a_{i_k}, x_k \rangle}{\|a_{i_k}\|^2_2} a_{i_k},
$$
for the case where $A x = b$ is a consistent linear system of equations, and $\tilde{b} = b + \varepsilon$, where $\varepsilon$ has independent mean zero random entries. The main result of \cite{Marshall2023} shows that there is an optimal learning rate schedule that depends on two hyperparameters. A similar learning rate problem for block Kaczmarz problems was subsequently studied in \cite{Tondji2023}. 

Part of the motivation for the current paper was to understand adaptive learning rates in the context of the randomized Kaczmarz algorithm. In particular, we were motivated by trying to understand ADAM \cite{Kingma2014}, a popular stochastic optimization method that adaptively controls the learning rate. The geometric smoothing of the momentum in \eqref{eq:our-method} is somewhat reminiscent of the geometric averaging used in ADAM. We note that the idea of using weighted averaging (without momentum) was considered in the context of the randomized Kaczmarz algorithm by \cite{Moorman2020}. 

Several authors have considered Kaczmarz algorithms that can be interpreted in terms of adaptive learning rates in the context of solving linear systems of equations with sparse corrupted equations
\cite{coria2024quantile,Haddock2020,jarman2021quantilerk,Steinerberger2022}.
 Here, the challenge is to find a way to ignore the corrupted equations in a hard or soft way, which can be interpreted as using an adaptive learning rate that de-emphasizes certain equations.

Another line of related research is the accelerated randomized Kaczmarz (ARK) \cite{liu2015}, which applies Nesterov's accelerated procedure, which defines $x_k,y_k,v_k$ iteratively by
$$
\left\{
\begin{array}{ccl}
y_k &=& \alpha_k v_k + (1-\alpha_k) x_k, \\[5pt]
x_{k+1} &=& y_k - \theta_k \nabla f(y_k), \\[5pt]
v_{k+1} &=& \beta_k v_k + (1-\beta_k) y_k - \gamma_k \nabla f(y_k), \\[5pt]
\end{array} \right.
$$
see \cite{Nesterov2004}.
Modifications and extensions of ARK have been considered by several authors; see \cite{gupta2023achieving,  zeng2023adaptive,  zuo2023fast}. While our motivation is different, the iteration considered in this paper has some similarities to Nesterov's accelerated procedure. 
In the general setting, several authors have considered momentum in stochastic iteration in different contexts; in particular, see \cite{Loizou2020,Morshed2021}.

Another related work by Han, Su, Xie \cite{han2022randomized}, 
incorporates momentum into randomized Douglas-Rachford methods, which are related to the randomized Kaczmarz algorithm and use iterative reflections. In this work, the authors use a sequence of reflections followed by one momentum-based step, which can be viewed as another way to achieve smoothing of the momentum term.

\subsection{Main contributions} Before presenting our main result and its corollaries, we outline the contributions of the paper. The main contributions are:
\begin{enumerate}
\item Introducing KGSM
\eqref{eq:our-method}, a Kaczmarz algorithm with geometrically smoothed momentum: The simplicity of this algorithm helps isolate the effect of geometrically smoothed momentum, while our theoretical and numerical results demonstrate that, despite its simplicity, the iterates of KGSM have a rich structure. 

\item Addressing a question raised by Steinerberger \cite{Steinerberger2021} (see \S \ref{sec:intro} above) about the possibility of using results on the convergence of the Kaczmarz algorithm along singular vectors to accelerate the convergence: Our main result extends  \cite[Theorem 1.1]{Steinerberger2021}. To the best of our knowledge, our work is the first to study the interplay between momentum and singular vectors in the context of the Kaczmarz algorithm. Our results provide insight into the $(M,\beta)$ parameter space, which governs the dynamical system of iterates produced by KGSM.

\item Building towards a precise understanding of stochastic optimization methods in the context of linear algebra: Our results describe the behavior of geometrically smoothed momentum in relation to the singular values and vectors of the matrix defining a linear least squares loss function. These results represent a step towards developing a complete and precise understanding of the algorithms used in practice (such as ADAM \cite{Kingma2014}) in the linear algebra setting. Our work opens several new directions for possible inquiry, including extending our analysis to more sophisticated momentum algorithms; see \S \ref{discussion} for further discussion.
\end{enumerate}

\subsection{Main result} \label{sec:mainresult}
Our main result establishes a convergence result for the signed error in the direction of singular vectors for randomized Kaczmarz with geometrically smoothed momentum (KGSM). Let $\beta \in [0,1)$  and $M \in [0,1]$ be given parameters, suppose that $x_0 \in \mathbb{R}^n$ is a given initial vector, and let $y_0 = 0 \in \mathbb{R}^n$ be the zero vector. Define
\begin{equation} \label{eq:our-method2}
\left\{
\begin{array}{ccl}
   x_{k+1}& = & x_k +\displaystyle\frac{{b}_{i_k} - \langle x_k, a_{i_k} \rangle}{||a_{i_k}||^2_2}a_{i_k}   +M y_{k} \\[10pt]
   y_{k+1}& = & \beta y_k + (1-\beta)(x_{k+1}-x_k) ,
\end{array} \right. 
\end{equation}
for $k = 0,1,\ldots$ 
where $i_k$ is chosen randomly and independently from $\{1,\ldots,m\}$ such that $i$  is chosen with probability $\|a_i\|_2^2/\|A\|_F^2$.

\begin{theorem}[Main result] \label{thm1}
Fix $\beta \in [0,1)$, $M \in [0,1]$, and ${ l} \in \{1,\ldots,n\}$. 
Suppose that $x_k$ is defined by  
KGSM \eqref{eq:our-method2}. For all $k \ge 0$ we have
\begin{equation} \label{maineq}
 \mathbb{E} \langle x_{k+1} - x, v_l \rangle =   
    \begin{bmatrix}
        r \\
        \zeta \\
    \end{bmatrix}^\top
    \begin{bmatrix}
        r & \zeta \\
        -1 & \beta \\
    \end{bmatrix}^k
    \begin{bmatrix}
   1 \\ 
   \frac{-1}{1-\beta}
\end{bmatrix}
 \langle x_0 -x, v_l \rangle,
\end{equation}
where
\begin{equation*}
r := 1 - \frac{\sigma_l^2}{\|A \|_F^2}+M(1-\beta), \quad \text{and} \quad
\zeta := M(1-\beta)^2.
\end{equation*}
\end{theorem}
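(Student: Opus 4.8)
The plan is to reduce \eqref{maineq} to a two-dimensional scalar linear recursion and then recognize the matrix $\mathcal{T} := \begin{bmatrix} r & \zeta \\ -1 & \beta \end{bmatrix}$ as its transition operator.

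First I would pass to the error $e_k := x_k - x$. Since $Ax=b$ is consistent we have $b_{i_k} = \langle a_{i_k}, x\rangle$, so the first line of \eqref{eq:our-method2} becomes $e_{k+1} = \bigl(I - a_{i_k} a_{i_k}^\top / \|a_{i_k}\|_2^2\bigr) e_k + M y_k$. Letting $\mathcal{F}_k := \sigma(i_0,\dots,i_{k-1})$, the vectors $e_k$ and $y_k$ are $\mathcal{F}_k$-measurable while $i_k$ is independent of $\mathcal{F}_k$, so taking the conditional expectation and using $\mathbb{E}\bigl[a_{i_k} a_{i_k}^\top/\|a_{i_k}\|_2^2\bigr] = A^\top A/\|A\|_F^2$ (the computation underlying \eqref{kaczmarzresult}), then taking total expectations, gives $\mathbb{E} e_{k+1} = (I - A^\top A/\|A\|_F^2)\,\mathbb{E} e_k + M\,\mathbb{E} y_k$ and $\mathbb{E} y_{k+1} = \beta\,\mathbb{E} y_k + (1-\beta)(\mathbb{E} e_{k+1} - \mathbb{E} e_k)$. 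Pairing with $v_l$ and using $A^\top A v_l = \sigma_l^2 v_l$, the scalars $E_k := \mathbb{E}\langle x_k - x, v_l\rangle$ and $Y_k := \mathbb{E}\langle y_k, v_l\rangle$ satisfy
\[
E_{k+1} = (1-\eta_l) E_k + M Y_k, \qquad Y_{k+1} = \beta Y_k + (1-\beta)(E_{k+1}-E_k),
\]
with $E_0 = \langle x_0 - x, v_l\rangle$ and $Y_0 = 0$ (since $y_0 = 0$), where $\eta_l := \sigma_l^2/\|A\|_F^2$. This is the analog of \eqref{kaczmarzstef} with momentum, and it already reads off the natural transition matrix $\bigl[\begin{smallmatrix} 1-\eta_l & M \\ -(1-\beta)\eta_l & \beta+(1-\beta)M\end{smallmatrix}\bigr]$.

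Next I would put this in the form of the theorem. The key observation is that $[\,r\ \ \zeta\,]$ is precisely the first row of $\mathcal{T}$, i.e. $[\,r\ \ \zeta\,] = e_1^\top \mathcal{T}$ with $e_1 = (1,0)^\top$; hence the right-hand side of \eqref{maineq} equals $e_1^\top \mathcal{T}^{k+1} d\,E_0$ with $d = (1, -1/(1-\beta))^\top$. So it suffices to exhibit a sequence $u_k \in \mathbb{R}^2$ with $u_{k+1} = \mathcal{T} u_k$, $u_0 = d\,E_0$, and $(u_k)_1 = E_k$; then $E_{k+1} = (u_{k+1})_1 = e_1^\top \mathcal{T}^{k+1} u_0$, which is the claim. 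The correct choice is $(u_k)_1 := E_k$ and $(u_k)_2 := \dfrac{Y_k}{(1-\beta)^2} - \dfrac{E_k}{1-\beta}$, which is well defined since $\beta \in [0,1)$; then $u_0 = d\,E_0$ because $Y_0 = 0$. Using the identities $r - \zeta/(1-\beta) = 1-\eta_l$ and $\zeta/(1-\beta)^2 = M$ one checks $(u_{k+1})_1 = E_{k+1} = (1-\eta_l)E_k + MY_k = r(u_k)_1 + \zeta(u_k)_2$; and substituting the $Y$-recursion into $(u_{k+1})_2$, the $\beta Y_k$ term separates and the remainder collapses via $(\beta-1)/(1-\beta) = -1$ to give $(u_{k+1})_2 = -E_k + \beta(u_k)_2 = -(u_k)_1 + \beta(u_k)_2$. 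Both computations are short algebra valid for every $M \in [0,1]$, and the $k=0$ instance is consistent since \eqref{maineq} then reduces to $E_1 = (1-\eta_l)E_0$.

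The only genuinely non-mechanical step is the change of variables in the last paragraph — equivalently, conjugating the bare transition matrix from the first step into the normalized form $\mathcal{T}$ so that its first row reproduces the vector $(r,\zeta)$ appearing in \eqref{maineq}. Once the ansatz $(u_k)_2 = Y_k/(1-\beta)^2 - E_k/(1-\beta)$ is written down, the rest is forced bookkeeping. An essentially equivalent route is to prove \eqref{maineq} directly by induction on $k$ with $(E_k,Y_k)$ as the induction data, but the matrix reformulation makes the role of the first row of $\mathcal{T}$ transparent and avoids guessing a closed form.
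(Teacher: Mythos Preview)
Your proof is correct. Both your argument and the paper's reduce to the same two-dimensional linear recursion governed by $\mathcal{T}$, but you arrive there more directly. The paper first unrolls $y_k$ into an explicit geometric sum $y_k = (1-\beta)\sum_{j=1}^k \beta^{k-j}(x_j - x_{j-1})$, massages it to isolate the auxiliary quantity $S_{k-1} := \sum_{j=1}^{k-1}\beta^{k-1-j}\langle x_j, v_l\rangle + \tfrac{\beta^{k-1}}{1-\beta}\langle x_0, v_l\rangle$, and then runs a backward induction on conditional expectations (via the tower property) to show that the pair $(\langle x_k, v_l\rangle, -S_{k-1})$ evolves under $\mathcal{T}$. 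Your second coordinate $(u_k)_2 = Y_k/(1-\beta)^2 - E_k/(1-\beta)$ is exactly $-\mathbb{E}\,S_{k-1}$, so the two proofs are tracking the same object; but by taking total expectations immediately and working with the deterministic pair $(E_k, Y_k)$ you bypass both the geometric-sum manipulation and the tower-property induction. Your route is shorter and more elementary. The paper's conditional formulation does retain slightly more information (the recursion holds pathwise before averaging), which could in principle feed into variance or concentration analyses, though the paper does not exploit this.
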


The proof of Theorem \ref{thm1} is given in \S \ref{proofmainresult}. 
We illustrate the result with several corollaries and numerical examples.
First, in the following corollary, we show that setting $M = 0$ recovers Theorem \ref{thm1} of Steinerberger \cite{Steinerberger2021}.

\begin{corollary}[$M = 0$] \label{corstefan} Additionally, assume $M = 0$, then \eqref{eq:our-method2} reduces to randomized Kaczmarz \eqref{kaczmarz}, and 
\eqref{maineq} reduces to
\begin{equation} \label{resultM0} 
 \mathbb{E} \langle x_{k+1} - x, v_l \rangle =   
    \begin{bmatrix}
        1-\eta_l \\
        0 \\
    \end{bmatrix}^\top
    \begin{bmatrix}
        1-\eta_l & 0 \\
        -1 & \beta \\
    \end{bmatrix}^{k}
    \begin{bmatrix}
   1 \\ 
   -\frac{1}{1-\beta}
\end{bmatrix}
 \langle x_0 -x, v_l \rangle,
\end{equation}
where we used the notation $\eta_l := \sigma_l^2/\|A\|_F^2$. Recall that 
$$
\begin{bmatrix}
a & 0 \\
b & c 
\end{bmatrix}^n 
= 
\begin{bmatrix}
a^n & 0 \\
x_n & c^n
\end{bmatrix},
$$
where $x_n = b (a^n + a^{n-1} c + \cdots + a c^{n-1} + c^n)$. Thus, \eqref{resultM0} reduces to
\begin{equation*} 
 \mathbb{E} \langle x_{k+1} - x, v_l \rangle =   (1-\eta_l)^{k+1} \langle x_0 - x, v_l \rangle,
\end{equation*} which is Theorem \ref{thm1} of Steinerberger \cite{Steinerberger2021}.
\end{corollary}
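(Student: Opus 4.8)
The plan is to verify the corollary's three assertions in turn, each by direct substitution into quantities already in hand. First, setting $M = 0$ in KGSM \eqref{eq:our-method2} removes the term $M y_k$ from the update for $x_{k+1}$, so the $x$-iteration becomes exactly randomized Kaczmarz \eqref{kaczmarz} (the auxiliary sequence $y_k$ continues to evolve but no longer influences $x_k$). Second, substituting $M = 0$ into the definitions in Theorem \ref{thm1} gives $\zeta = M(1-\beta)^2 = 0$ and $r = 1 - \sigma_l^2/\|A\|_F^2 + M(1-\beta) = 1 - \eta_l$, with $\eta_l := \sigma_l^2/\|A\|_F^2$; plugging these into \eqref{maineq} yields \eqref{resultM0} verbatim.

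Third, I would evaluate the matrix product on the right-hand side of \eqref{resultM0}. Since $\zeta = 0$, the $2 \times 2$ matrix there is lower triangular, so by the recalled identity (with $a = 1-\eta_l$, $b = -1$, $c = \beta$) its $k$-th power has the form $\begin{bmatrix} (1-\eta_l)^k & 0 \\ s & \beta^k \end{bmatrix}$ for some scalar $s$ whose exact value is immaterial. Multiplying on the right by $(1,\, -1/(1-\beta))^\top$ and then on the left by the row vector $(1-\eta_l,\, 0)$, the zero in the second slot of that row vector annihilates the entire bottom row, leaving $(1-\eta_l)\cdot(1-\eta_l)^k\cdot 1 = (1-\eta_l)^{k+1}$. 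Hence $\mathbb{E}\langle x_{k+1} - x, v_l\rangle = (1-\eta_l)^{k+1}\langle x_0 - x, v_l\rangle$, which is \eqref{kaczmarzstef} after relabeling $k+1 \mapsto k$, i.e.\ Steinerberger's Theorem 1.1.

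I do not expect a genuine obstacle, since Theorem \ref{thm1} is assumed and everything else is bookkeeping; the only points needing care are that the exponent is $k+1$ rather than $k$ (because the left-hand side of \eqref{maineq} is $\mathbb{E}\langle x_{k+1}-x,v_l\rangle$, not $\mathbb{E}\langle x_k-x,v_l\rangle$), and that, because $\zeta = 0$, the lower-triangular part of the matrix power never actually contributes to the scalar result, so the explicit formula for its $(2,1)$-entry is not needed.
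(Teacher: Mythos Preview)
Your proposal is correct and follows essentially the same approach as the paper: substitute $M=0$ to obtain $r=1-\eta_l$ and $\zeta=0$, then exploit the lower-triangular structure and the zero in the row vector $(1-\eta_l,0)$ to collapse the matrix product to $(1-\eta_l)^{k+1}$. Your observation that the explicit $(2,1)$-entry of the matrix power is immaterial (since it is annihilated by the zero) is exactly the point, and the paper's own treatment is equally brief.
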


%

Our next corollary optimizes the value of the smoothing parameter $\beta$ by minimizing the maximum magnitude eigenvalue of the $2 \times 2$ matrix
\begin{equation} \label{eqB}
B :=
    \begin{bmatrix}
        r & \zeta \\
        -1 & \beta \\
    \end{bmatrix}
\end{equation}
from  \eqref{maineq} in Theorem \ref{thm1}. The eigenvalues of $B$ are
\begin{equation} \label{eqeigenvalues}
\lambda_1 := \frac{r + \beta + \sqrt{(r-\beta)^2 - 4 \zeta}}{2}
\quad \text{and} \quad
\lambda_2 := \frac{r + \beta - \sqrt{(r-\beta)^2 - 4 \zeta}}{2}.
\end{equation}
Since $r + \beta > 0$, it follows that $|\lambda_1| \ge |\lambda_2|$ for all $\beta,M \in [0,1]$.
In the following, we consider the problem of minimizing $|\lambda_1|$. Fix $M \in [0,1]$, and consider the eigenvalue $\lambda_1 = \lambda_1(\beta)$ as a function of the smoothing parameter $\beta$.

\begin{corollary}[Minimizing $\lambda_1$] \label{cormin}  Fix $M \in [0,1]$. We have
\[
\argmin_{\beta \in [0,1]} {|\lambda_1(\beta)|} =
\left\{\begin{array}{cl}
\displaystyle
     1 - \frac{\eta_l}{(1-\sqrt{M})^2} & \text{if} \quad 0 \le M \le (1-\sqrt{\eta_l})^2, \\[10pt]
     0 & \text{if} \quad  (1-\sqrt{\eta_l})^2 < M \le 1-\eta_l, \\[10pt]
\displaystyle
     1 - \frac{\eta_l}{(1+\sqrt{M})^2}  & \text{if} \quad 1-\eta_l \le M \le 1.
\end{array}\right.
\] 
\end{corollary}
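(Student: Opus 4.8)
The plan is to reduce the minimization of $|\lambda_1(\beta)|$ to a study of the spectral radius $\rho(B)$ as $\beta$ varies, exploiting that the discriminant of the characteristic polynomial of $B$ factors nicely; throughout, $\eta_l = \sigma_l^2/\|A\|_F^2 \in (0,1]$. Write $\tau := r + \beta = (1-\eta_l+M) + (1-M)\beta$ for the trace and $\delta := r\beta + \zeta = M + (1-\eta_l-M)\beta$ for the determinant of $B$, so that $\lambda_{1,2} = (\tau \pm \sqrt{\Delta})/2$ with $\Delta := \tau^2 - 4\delta = (r-\beta)^2 - 4\zeta$. Since $r + \beta > 0$ we have $|\lambda_1| \ge |\lambda_2|$ (as noted before the statement), so $|\lambda_1| = \rho(B)$; moreover $\lambda_1 = (\tau + \sqrt\Delta)/2 \ge \tau/2 \ge 0$ when $\Delta \ge 0$, and $|\lambda_1|^2 = \delta$ when $\Delta \le 0$.

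First I would compute $\Delta$ after substituting $s := 1-\beta \in [0,1]$. Using $r - \beta = (1+M)s - \eta_l$, expansion gives $\Delta = (1-M)^2 s^2 - 2(1+M)\eta_l s + \eta_l^2 = (1-M)^2(s-s_-)(s-s_+)$ with $s_- := \eta_l/(1+\sqrt M)^2$ and $s_+ := \eta_l/(1-\sqrt M)^2$ (interpreting $s_+ = +\infty$ when $M = 1$). Thus $\{\beta \in [0,1] : \Delta(\beta) \le 0\}$ is a nonempty closed interval $J$ with right endpoint $1 - s_-$ (nonempty since $s_- \le \eta_l \le 1$), and with left endpoint $1 - s_+$ when $s_+ \le 1$, i.e. when $M \le (1 - \sqrt{\eta_l})^2$, and left endpoint $0$ otherwise; this dichotomy produces the first case of the statement.

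Next I would pin down how $\lambda_1$ behaves on the parts of $[0,1]$ where $\Delta > 0$, using the one-line algebraic identity $\big((1-M)^2 s - (1+M)\eta_l\big)^2 - (1-M)^2\Delta = 4M\eta_l^2$. Differentiating $\lambda_1 = (\tau + \sqrt\Delta)/2$ in $s$ and using this identity to bound $\big|(1-M)^2 s - (1+M)\eta_l\big|$ below by $(1-M)\sqrt\Delta$ gives $d\lambda_1/ds \le 0$ for $s < s_-$ and $d\lambda_1/ds \ge 0$ for $s > s_+$; equivalently, $\lambda_1$ is nondecreasing in $\beta$ on the real branch to the right of $J$ and nonincreasing in $\beta$ on the real branch to the left of $J$ (when the latter is nonempty). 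Combined with the fact that $|\lambda_1|^2 = \delta$ is affine in $\beta$ on $J$, this forces the minimum of $|\lambda_1|$ over $[0,1]$ to be attained at an endpoint of $J$. At $\Delta = 0$ one has $\lambda_1 = \tau/2$; a short computation gives $\lambda_1 = 1 - \eta_l/(1+\sqrt M)$ at $s = s_-$ and, when $s_+ \le 1$, $\lambda_1 = 1 - \eta_l/(1-\sqrt M)$ at $s = s_+$, while $|\lambda_1| = \sqrt M$ at $\beta = 0$.

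It remains to compare these finitely many candidate values. The slope of $|\lambda_1|^2 = \delta$ on $J$ equals $1-\eta_l-M$, so the sub-cases are governed by its sign. Using $(1-\sqrt{\eta_l})^2 \le 1-\eta_l$, the inequality $1 - \eta_l/(1-\sqrt M) \le 1 - \eta_l/(1+\sqrt M)$, the fact that $1 - \eta_l/(1-\sqrt M) \ge 0$ whenever $M \le (1-\sqrt{\eta_l})^2$, and the equivalence $\sqrt M \le 1 - \eta_l/(1+\sqrt M) \iff M \le 1-\eta_l$, one obtains that the minimizer is $\beta = 1 - s_+ = 1 - \eta_l/(1-\sqrt M)^2$ when $M \le (1-\sqrt{\eta_l})^2$, is $\beta = 0$ when $(1-\sqrt{\eta_l})^2 < M \le 1-\eta_l$, and is $\beta = 1 - s_- = 1 - \eta_l/(1+\sqrt M)^2$ when $M \ge 1-\eta_l$. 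The two expressions agree at $M = (1-\sqrt{\eta_l})^2$ (both give $0$), and at $M = 1-\eta_l$ both $\beta = 0$ and $\beta = 1 - \eta_l/(1+\sqrt M)^2$ minimize, so the set-valued $\argmin$ is consistent with the stated piecewise formula. I expect the main obstacle to be keeping the case analysis organized — tracking which real branches are present, and separately handling the degenerate values $M \in \{0,1\}$ where $\Delta$ is a perfect square or linear in $s$ — rather than any individual computation; the one genuinely nonobvious ingredient is the discriminant identity above.
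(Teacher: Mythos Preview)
Your proposal is correct and follows essentially the same approach as the paper: both arguments locate the zeros of the discriminant $\Delta$, use the identity $\bigl((1-M)^2 s - (1+M)\eta_l\bigr)^2 - (1-M)^2\Delta = 4M\eta_l^2$ (which is exactly the paper's identity $(1-M)^2 D - (\partial_\beta D/2)^2 = -4\eta_l^2 M$ in your $s=1-\beta$ variable) to force monotonicity on the real-eigenvalue branches, and use that $|\lambda_1|^2 = \delta$ is affine with slope $1-\eta_l-M$ on the complex interval. Your packaging is slightly tidier---organizing the case split around the endpoints of the interval $J$ and treating both real branches uniformly via the identity, where the paper instead bounds the rightmost real branch directly in its Case~1c---but the substance is identical.
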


The proof of Corollary \ref{cormin} is given in \S \ref{proofcors}.
Our next corollary restates Theorem \ref{thm1} for the case where $\beta$ is chosen using the optimized value of Corollary \ref{cormin} for the case $0 \le M \le (1-\sqrt{\eta_l})^2$, which is the numerical region of interest, see \S \ref{explorembeta}.

\begin{corollary} \label{coropt} In addition to the hypotheses of Theorem \ref{thm1} assume that $0 \le M \le (1-\sqrt{\eta_l})^2$ and set
\begin{equation} \label{eqbetaf}
\beta = 1 - \frac{\eta_l}{(1-\sqrt{M})^2}. 
\end{equation}
Then, 
$$
 \mathbb{E} \langle x_{k+1} - x, v_l \rangle =   
\left(1 - \frac{\eta_l}{1-\sqrt{M}} \right)^k \left( 1 +  \eta_l\frac{\sqrt{M}(k+1))-1}{1- \sqrt{M}}  \right)
 \langle x_0 -x, v_l \rangle.
 $$
\end{corollary}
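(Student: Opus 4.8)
The plan is to specialize the matrix identity \eqref{maineq} of Theorem \ref{thm1} to the prescribed smoothing parameter and exploit the fact that, at this value, the $2\times 2$ matrix $B$ from \eqref{eqB} has a repeated eigenvalue. First I would substitute $\beta = 1 - \eta_l/(1-\sqrt M)^2$, equivalently $1-\beta = \eta_l/(1-\sqrt M)^2$, into $r = 1 - \eta_l + M(1-\beta)$ and $\zeta = M(1-\beta)^2$, and check by a short direct computation—this is also what underlies the first case of Corollary \ref{cormin}—that the discriminant $(r-\beta)^2 - 4\zeta$ vanishes. Hence $B$ has the single eigenvalue $\lambda := (r+\beta)/2$, and one verifies $\lambda = 1 - \eta_l/(1-\sqrt M)$. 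Note also that $0 \le M \le (1-\sqrt{\eta_l})^2$ is exactly the condition under which $\beta \in [0,1)$, so Theorem \ref{thm1} applies.

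Since the characteristic polynomial of $B$ is $(t-\lambda)^2$, Cayley--Hamilton gives $(B-\lambda I)^2 = 0$, so for every $k \ge 0$
\[
B^k = \big(\lambda I + (B-\lambda I)\big)^k = \lambda^k I + k\lambda^{k-1}(B - \lambda I) = k\lambda^{k-1} B - (k-1)\lambda^k I .
\]
Plugging this into \eqref{maineq} reduces the problem to evaluating the two scalars $u^\top w$ and $u^\top B w$, where $u := (r,\zeta)^\top$ and $w := (1, -\tfrac{1}{1-\beta})^\top$. A one-line computation gives $u^\top w = r - M(1-\beta) = 1 - \eta_l$ and $Bw = (1-\eta_l,\, -\tfrac{1}{1-\beta})^\top$, so $u^\top B w = r(1-\eta_l) - M(1-\beta) = (1-\eta_l)^2 - \eta_l M(1-\beta)$. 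Therefore
\[
\mathbb{E}\langle x_{k+1}-x, v_l\rangle = \Big( k\lambda^{k-1}\big[(1-\eta_l)^2 - \eta_l M(1-\beta)\big] - (k-1)\lambda^k(1-\eta_l)\Big)\langle x_0 - x, v_l\rangle .
\]

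It remains to simplify this scalar coefficient. Writing $s = \sqrt M$ and using $1-\beta = \eta_l/(1-s)^2$, one has $\eta_l M(1-\beta) = \big(\eta_l s/(1-s)\big)^2$, so a difference-of-squares factorization together with $\lambda = (1-s-\eta_l)/(1-s)$ yields $(1-\eta_l)^2 - \eta_l M(1-\beta) = \lambda\big(\lambda + 2\eta_l s/(1-s)\big)$. Factoring $\lambda^k$ out of the coefficient and substituting $k\lambda = k - k\eta_l/(1-s)$, the bracket collapses to $1 + \eta_l\big(s(k+1)-1\big)/(1-s)$, which is exactly the asserted expression with $\lambda^k = \big(1 - \eta_l/(1-\sqrt M)\big)^k$. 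The only genuine work is this last bookkeeping; the rest is substitution plus Cayley--Hamilton. As a sanity check, at $k=0$ the formula returns $(1-\eta_l)\langle x_0 - x, v_l\rangle$, matching $u^\top w\,\langle x_0 - x, v_l\rangle$ from Theorem \ref{thm1}.
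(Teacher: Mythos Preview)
Your argument is correct and follows the same high-level strategy as the paper: substitute the optimal $\beta$, observe that $B$ has the repeated eigenvalue $\lambda = 1 - \eta_l/(1-\sqrt{M})$, compute $B^k$ from this, and plug into \eqref{maineq}. The difference is purely in how $B^k$ is obtained. The paper writes down the explicit Jordan decomposition $B = PJP^{-1}$ with concrete $2\times 2$ matrices $P$, $P^{-1}$ and then multiplies out the five-factor product $u^\top P J^k P^{-1} w$. You instead invoke Cayley--Hamilton to get $B^k = k\lambda^{k-1}B - (k-1)\lambda^k I$ directly, which reduces the computation to the two scalars $u^\top w$ and $u^\top Bw$. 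Your route is slightly more economical---it avoids computing a Jordan basis and the associated matrix inversions---while the paper's explicit decomposition has the minor advantage of making the structure of $B^k$ entry-by-entry visible. Both are straightforward once the repeated eigenvalue is identified; your algebraic simplification at the end (the difference-of-squares factorization) is a clean way to close the gap that the paper handles by brute-force matrix multiplication.
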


The proof of Corollary \ref{coropt} is given in \S \ref{proofcors}.
Several numerical experiments illustrating Corollary \ref{coropt},
and its effectiveness at modeling the behavior of KGSM
are included in \S \ref{numerics}. Next, we consider the case where the matrix $B$ defined in \eqref{eqB} has complex eigenvalues.

\begin{corollary} \label{corarg} In addition to the assumptions in Theorem \ref{thm1}, suppose that the eigenvalue $\lambda_1$ defined in  \eqref{eqeigenvalues} is complex with a non-zero imaginary part; that is,
$\lambda_1 = \rho e^{i \theta}$ for $\rho>0$ and $0<\theta < \pi$. Then,
\[
\mathbb{E}\langle x_{k+1}-x,v_l \rangle = C \rho^k \cos(k \theta + \theta_0),
\]
where $C$ and $\theta_0$ are constants that depend on $r,\zeta,\beta,v_l,x,$ and $x_0.$
\end{corollary}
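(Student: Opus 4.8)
The plan is to diagonalize $B$ over $\mathbb{C}$ and read off the closed form from \eqref{maineq}. Under the hypothesis $\lambda_1 = \rho e^{i\theta}$ with $0 < \theta < \pi$, the two eigenvalues of the real matrix $B$ form a genuine complex-conjugate pair, $\lambda_2 = \overline{\lambda_1} = \rho e^{-i\theta}$, and in particular $\lambda_1 \ne \lambda_2$, so $B = P \diag(\lambda_1,\lambda_2) P^{-1}$ for some invertible $P \in \mathbb{C}^{2\times 2}$. Substituting $B^k = P \diag(\lambda_1^k, \lambda_2^k) P^{-1}$ into the identity of Theorem \ref{thm1} gives $\mathbb{E}\langle x_{k+1}-x, v_l\rangle = (c_1 \lambda_1^k + c_2 \lambda_2^k)\langle x_0 - x, v_l\rangle$, where $c_1, c_2 \in \mathbb{C}$ are determined by $P$ together with the two fixed vectors $[r\ \ \zeta]^\top$ and $[1\ \ -1/(1-\beta)]^\top$ appearing in \eqref{maineq}, and hence depend only on $r, \zeta, \beta$, not on $k$.

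Next I would exploit that the left-hand side is real for every $k \ge 0$. Since the sequences $k \mapsto \lambda_1^k$ and $k \mapsto \lambda_2^k$ are linearly independent, reality for all $k$ forces $c_2 = \overline{c_1}$. Therefore $c_1\lambda_1^k + c_2\lambda_2^k = 2\operatorname{Re}(c_1 \rho^k e^{ik\theta}) = 2|c_1|\, \rho^k \cos(k\theta + \arg c_1)$, and the corollary follows with $C := 2|c_1|\, \langle x_0 - x, v_l\rangle$ and $\theta_0 := \arg c_1$; both depend only on $r, \zeta, \beta, v_l, x, x_0$ as claimed.

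A slightly cleaner route that avoids computing $P$: by the Cayley--Hamilton theorem, each entry of $B^k$ — and hence the scalar sequence $s_k := \mathbb{E}\langle x_{k+1}-x, v_l\rangle$ — satisfies the linear recurrence $s_{k+1} = (r+\beta)s_k - (r\beta+\zeta)s_{k-1}$, whose characteristic roots are precisely $\rho e^{\pm i\theta}$. The general real-valued solution of a second-order linear recurrence with a complex-conjugate root pair is $\rho^k(a\cos k\theta + b \sin k\theta) = C \rho^k \cos(k\theta + \theta_0)$, and $a,b$ (equivalently $C, \theta_0$) are pinned down by the explicit values of $s_0$ and $s_1$, which are linear in $\langle x_0 - x, v_l\rangle$ with coefficients depending only on $r,\zeta,\beta$.

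There is essentially no deep obstacle here; the two points that require a word of care are (i) verifying $\lambda_1 \ne \lambda_2$ so that the diagonalization — equivalently, the two-parameter solution family of the recurrence — is legitimate, which is immediate since $\theta \notin \{0, \pi\}$, and (ii) ensuring the constants come out real, which is exactly the content of the conjugacy relation $c_2 = \overline{c_1}$ (equivalently, the fact that a real recurrence with real initial data produces a real sequence).
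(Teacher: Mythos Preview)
Your proposal is correct and follows essentially the same route as the paper: diagonalize $B$ over $\mathbb{C}$ using the conjugate pair $\rho e^{\pm i\theta}$, substitute into \eqref{maineq}, and use reality of the left-hand side to force the two coefficients to be complex conjugates, yielding the $C\rho^k\cos(k\theta+\theta_0)$ form. Your alternative via the Cayley--Hamilton recurrence is a pleasant streamlining not in the paper, but the core argument is the same.
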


The proof of Corollary \ref{corarg} is given in \S \ref{proofofcorarg}. This corollary is further discussed and illustrated with numerical examples in 
\S \ref{numericscomplex} and \S  \ref{periodicspikingbehavior}. Before presenting these numerical examples, we emphasize the limitations of our analysis.

\begin{remark}[Limitations] \label{limitations}
We emphasize that Theorem \ref{thm1} and its corollaries provide formulas for the expected value of the signed quantity $\langle x_k - x, v_l \rangle$. If the sign of this quantity was known to remain constant $\sign \langle x_k - x, v_l \rangle = \sign \langle x_0 - x, v_l \rangle$ for some range of $k$, then these results become effective in describing the error $|\langle x_{k+1} - x, v_l \rangle|$ of $x_k$ to the solution $x$ in direction $v_l$ on that range of $k$. Informally speaking, the sign remaining constant corresponds to $x_k$ not overshooting the solution $x$ in the direction $v_l$ and keeping a constant approach direction. On the other hand, if $\sign \langle x_k - x, v_l \rangle$ is not determined by $\sign \langle x_0 - x, v_l \rangle$, then these results may or may not be effective in describing the error $|\langle x_{k+1} - x, v_l \rangle|$. For example, the $\sign \langle x_k - x, v_l \rangle$ could oscillate between positive and negative values or have an equal chance of becoming a large negative or positive value. One possible approach to understanding this issue would be to establish an absolute convergence result or an analog of \cite[Theorem 1.3]{Steinerberger2021}; see \S \ref{discussion} for further discussion.
\end{remark}

In the following section, we present various numerical examples that illustrate the dynamics of KGSM, including cases where the dynamics are explained by Theorem \ref{thm1} and cases that raise questions, which we discuss in \S \ref{discussion}.

\section{Numerical Examples} \label{numerics}
This section is organized as follows: in \S \ref{numericsprelim}, we introduce notation; in \S \ref{basicex}, we illustrate Corollary \ref{coropt} for a system with one small singular value; in \S \ref{numericscomplex}, we illustrate Corollary \ref{corarg} for a system with one small singular value; in \S 
\ref{explorembeta}, we explore the $(M,\beta)$ parameter space; 
in \S \ref{periodicspikingbehavior}, we discuss the periodic spiking behavior that the error in the direction of a singular vector sometimes exhibits; in \S \ref{exlinear}, we consider a system whose singular values decay linearly; and in \S \ref{manybad}, we consider a system with many small singular values.

Additional examples are included in Appendix \ref{furtherexamplesappendix}. In particular, \S \ref{gaussiansystem} presents an elementary example involving a Gaussian linear system; this example includes detailed pseudocode to make it simple to implement in any numerical programming environment. In \S \ref{heavyballmomentumbatchsize1fails},  \S \ref{additionalspectraldecays}, \S \ref{additionalbetaplots}, we provide supplementary numerical examples referenced throughout the text.

\subsection{Notation and preliminaries} \label{numericsprelim}
Suppose that $A$ is an $m \times n$ matrix with $m \ge n$. Let $\sigma_1 \ge \cdots \ge \sigma_n$ denote the singular vectors of $A$. Using this notation, the Frobenius norm $\|A\|_F$ can be expressed by
$$
\|A\|_F = \sqrt{\sum_{j=1}^n \sigma_j^2}.
$$
In the following, we generate random $m \times n$ matrices with specified singular values  $\sigma = (\sigma_1,\ldots,\sigma_n)$ as follows. First, we construct a random $m \times n$ matrix $U$ with orthonormal columns by starting with an $m \times n$ matrix $G$ with random independent standard normal entries and applying the Gram-Schmidt procedure to obtain an orthonormal basis for the column space. This construction guarantees that the column space of $U$ is sampled uniformly (with respect to the Haar measure) on the Grassmann manifold ${\bf{Gr}}_{n}(\mathbb{R}^m)$ of all $n$-dimensional subspaces of $\mathbb{R}^m$, see \cite[\S 5.2.6]{vershynin2018high}. Second, using the same procedure, we choose a random $n \times n$ matrix $V$ with orthonormal columns and set
$$
A := U \diag(\sigma) V,
$$
where $\diag(\sigma)$ is the $n \times n$ matrix whose $i$-th diagonal entry is $\sigma_i$, and which is zero off the diagonal.

\subsection{Basic Example} \label{basicex}
We generate a matrix with only one small singular value to create a simple example illustrating Corollary \ref{coropt}. More precisely, using the procedure described in \S \ref{numericsprelim}, we generate a random $100 \times 20$ matrix $A$ whose singular values are
$$
\sigma_1 = \cdots = \sigma_{19} = 1 ,\quad \text{and} \quad \sigma_{20} = 1/50.
$$
We choose a random solution vector $x \in \mathbb{R}^n$ (with independent standard normal entries) and define $b:= Ax$. We set
\begin{equation} \label{basicexparam}
M = 0.9, \quad \text{and} \quad \beta = 1 - \frac{\eta_{20}}{(1 - \sqrt{M})^2},
\end{equation}
where $\eta_{20} := \sigma_{20}^2/\|A\|_F^2$, see Remark \ref{settingM} for a discussion about setting the momentum parameter $M$. 

We choose a random initial vector $x_0$ (with independent standard normal entries) and run randomized Kaczmarz \eqref{kaczmarz} and KGSM \eqref{eq:our-method2}. At each iterate,  we compute the absolute error in the direction of the smallest right singular vector $|\langle x - x_k, v_{20} \rangle|$, see Figure \ref{fig02}. For comparison, we plot the theoretical estimates for 
$|\mathbb{E} \langle x_k - x, v_{20} \rangle|$ from \eqref{kaczmarzstef} and Corollary \ref{coropt}, respectively.
\begin{figure}[h!]
\centering
\includegraphics[width=.65\textwidth]{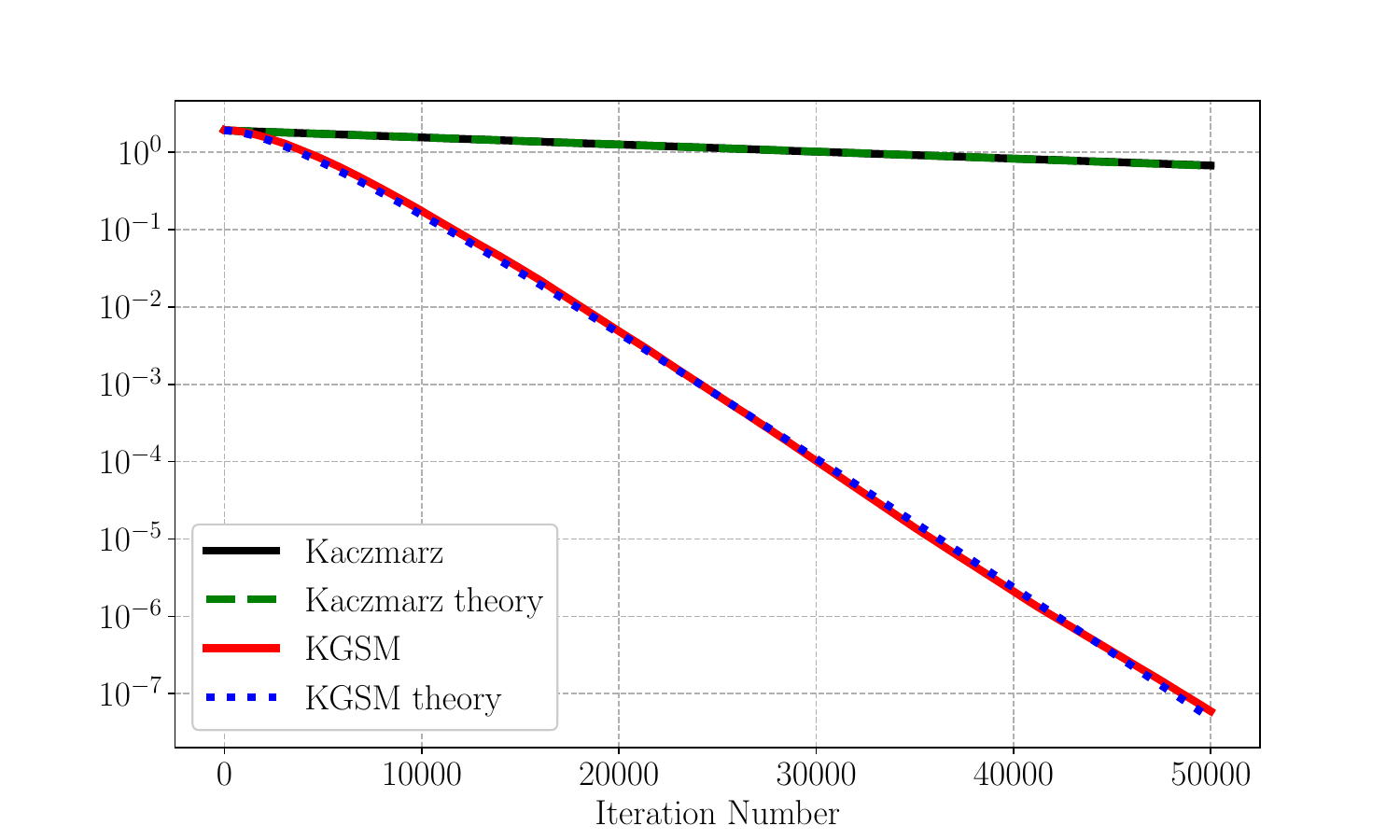}
\caption{The numerical error $|\langle x_k -x,v_{20}\rangle|$ 
for randomized Kaczmarz  \eqref{kaczmarz} and KGSM \eqref{eq:our-method2}, and 
the theoretical estimates for $|\mathbb{E} \langle x_k - x, v_{20} \rangle|$
from  \eqref{kaczmarzstef} and Corollary \ref{coropt}, for the example of \S \ref{basicex}.}
\label{fig02}
\end{figure}

The numerical results in Figure \ref{fig02} are an example of where the theoretical results of 
 \eqref{kaczmarzstef} and Corollary \ref{coropt} for the expected signed error $\mathbb{E} \langle x_k - x, v_{20}\rangle$ provide an accurate model for the absolute numerical error $|\langle x_k - x, v_{20} \rangle|$. Moreover, in this case, the addition of geometrically smoothed momentum drastically increases the convergence rate.

\subsection{Complex pertubation} \label{numericscomplex}
 Recall that from \eqref{eqeigenvalues} that
\begin{equation} \label{eql1d}
\lambda_1 := \frac{r + \beta + \sqrt{(r-\beta)^2 - 4 \zeta}}{2}
\end{equation}
is the largest magnitude eigenvalue of the $2 \times 2$ matrix \eqref{eqB} 
from Theorem \ref{thm1}, where $r = 1 - \sigma_l^2/\|A \|_F^2+M(1-\beta)$ 
and $\zeta = M(1-\beta)^2$. The formula for $\beta$ from Corollary \ref{coropt}
$$
\beta = 1 - \frac{\eta_l}{(1-\sqrt{M})^2},
$$
is the curve where the discriminant $D=(r-\beta)^2 - 4 \zeta$ in \eqref{eql1d} vanishes. 
Above the curve, the eigenvalues are complex, while below the curve, there are two distinct real eigenvalues.

In the following, we repeat the same experiment as in \S \ref{basicex}, but perturb $\beta$ by $0.001$ into the region where the eigenvalues are complex; that is, we set
$$
M = 0.9 \quad \text{and} \quad \beta = 1 - \frac{\eta_{20}}{(1 - \sqrt{M})^2} + 0.001.
$$
In this case, the theoretical estimate for
$|\mathbb{E} \langle x - x_k, v_{20} \rangle|$ for KGSM is computed using 
Theorem \ref{thm1}, and the complex eigenvalues of the matrix in \eqref{maineq} predict
oscillatory behavior, as explained in  Corollary \ref{corarg}, where the error dips at regular intervals,
see Figure \ref{fig03}.

\begin{figure}[h!]
    \centering
    \includegraphics[width=.65\textwidth]{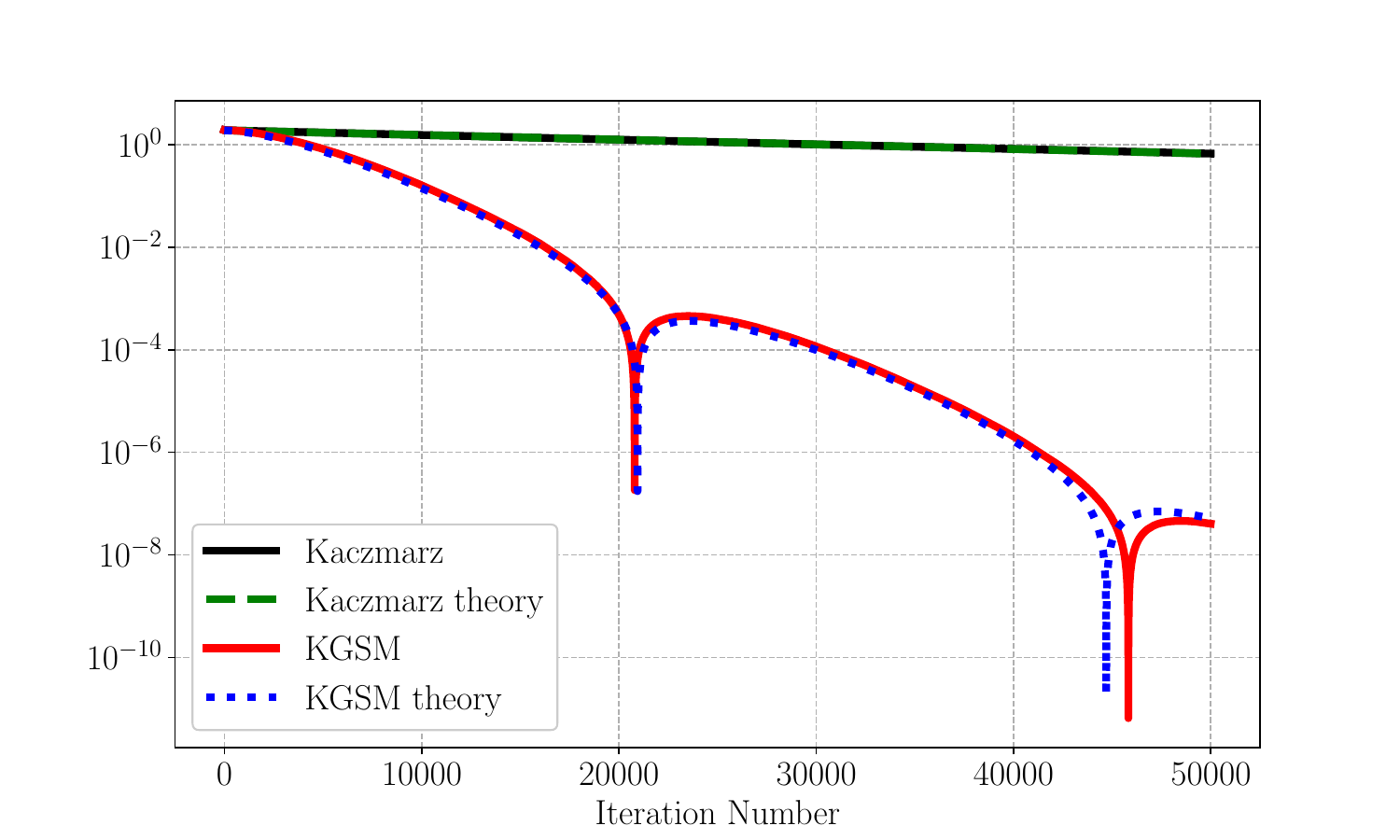}
    \caption{The numerical error $|\langle x_k -x,v_{20}\rangle|$ 
for randomized Kaczmarz 
\eqref{kaczmarz} and KGSM \eqref{eq:our-method2}, and
the theoretical estimates for $|\mathbb{E} \langle x_k - x, v_{20} \rangle|$ from  \eqref{kaczmarzstef} and Theorem \ref{thm1} for the example in \S \ref{numericscomplex}}
\label{fig03}
\end{figure}

Informally speaking, Figure \ref{fig03} illustrates a case where too much momentum is built in a given direction. 
 We further explore this periodic spiking behavior of the error in \S \ref{periodicspikingbehavior}, where we show that the spikes in the numerical error correspond to changes in $\sign \langle x_k - x, v_n \rangle$. 

\subsection{Exploring the $(M,\beta)$ parameter space} \label{explorembeta}
This section further explores the $(M,\beta)$ parameter space. For the same linear system described in \S \ref{basicex}, we run KGSM for four different values of $(M,\beta)$; in particular, we consider
\begin{equation} \label{parameterspace}
(M,\beta) =  (0.85,0.992), (0.9,0.992),\, (0.95,0.992), \text{ and }(0.965,0.932),
\end{equation}
see Figure \ref{fig04}.
\begin{figure}[h!]
\centering
\includegraphics[width = .6\textwidth]{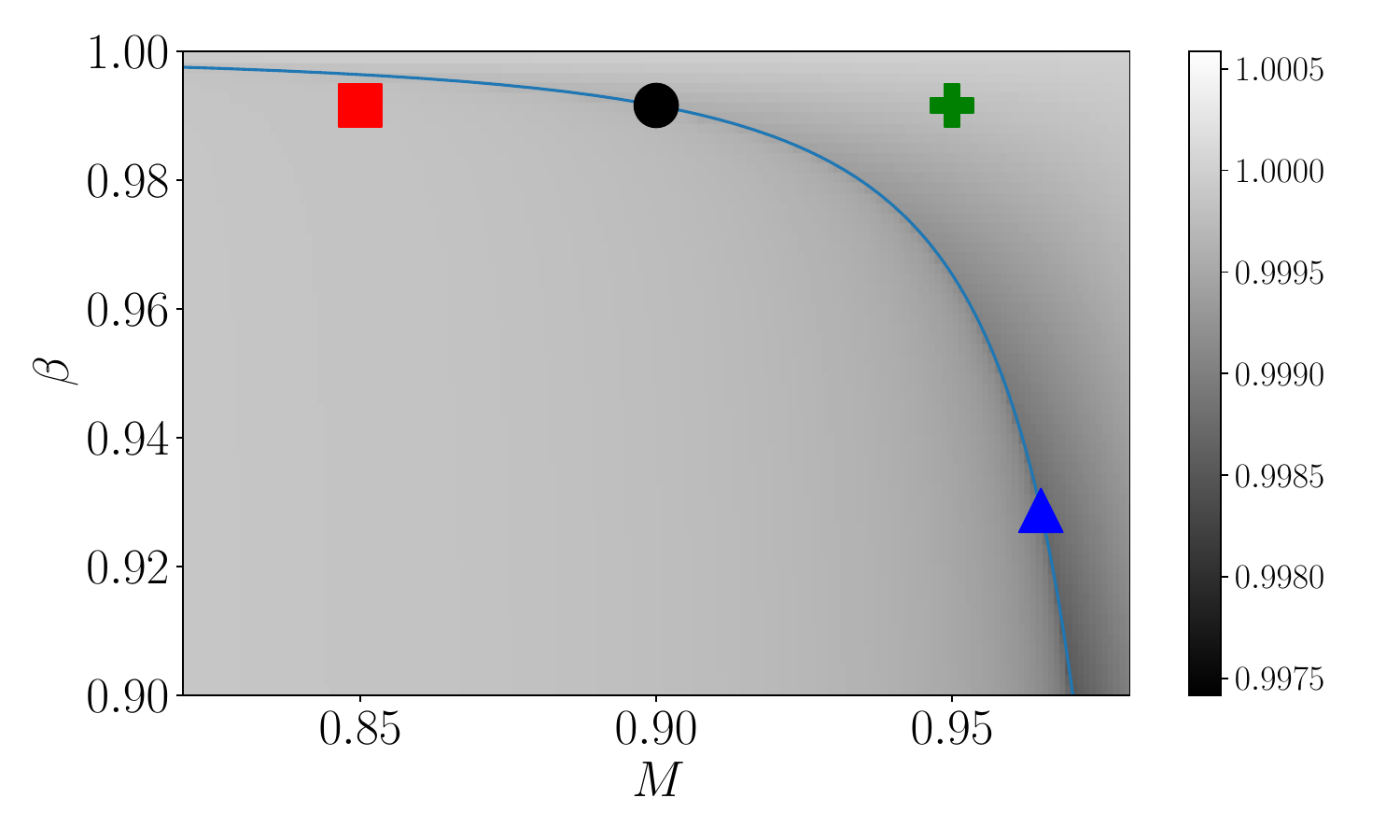}
\caption{Visualization of the values of $(M,\beta)$ from \eqref{parameterspace}. The curve  $\beta = 1 - \eta_{20}/(1 - \sqrt{M})^2$ is plotted for reference in blue.} \label{fig04}
\end{figure}

For each of the four values of $(M,\beta)$ illustrated in Figure \ref{fig04}, we run KGSM \eqref{eq:our-method2}, compute the absolute error in the direction of the smallest right singular vector $|\langle x - x_k, v_{20} \rangle|$, and plot the results in Figure \ref{fig05}.

To interpret the results in Figures \ref{fig05}, recall that the curve $\beta = 1 - \eta_{20}/(1 - \sqrt{M})^2$ in Figure \ref{fig04} divides the $(M,\beta)$ parameter space: above the curve, the eigenvalues of the $2 \times 2$ matrix in Theorem \ref{thm1} are complex; below the curve, the eigenvalues are distinct real values; and on the curve, the eigenvalues are repeated real values, see \S \ref{numericscomplex}.

\begin{figure}[h!]
\centering
\begin{tikzpicture}
\node[anchor=north west] at (0,0) {\includegraphics[width=0.355\textwidth,trim=30 0 30 0]{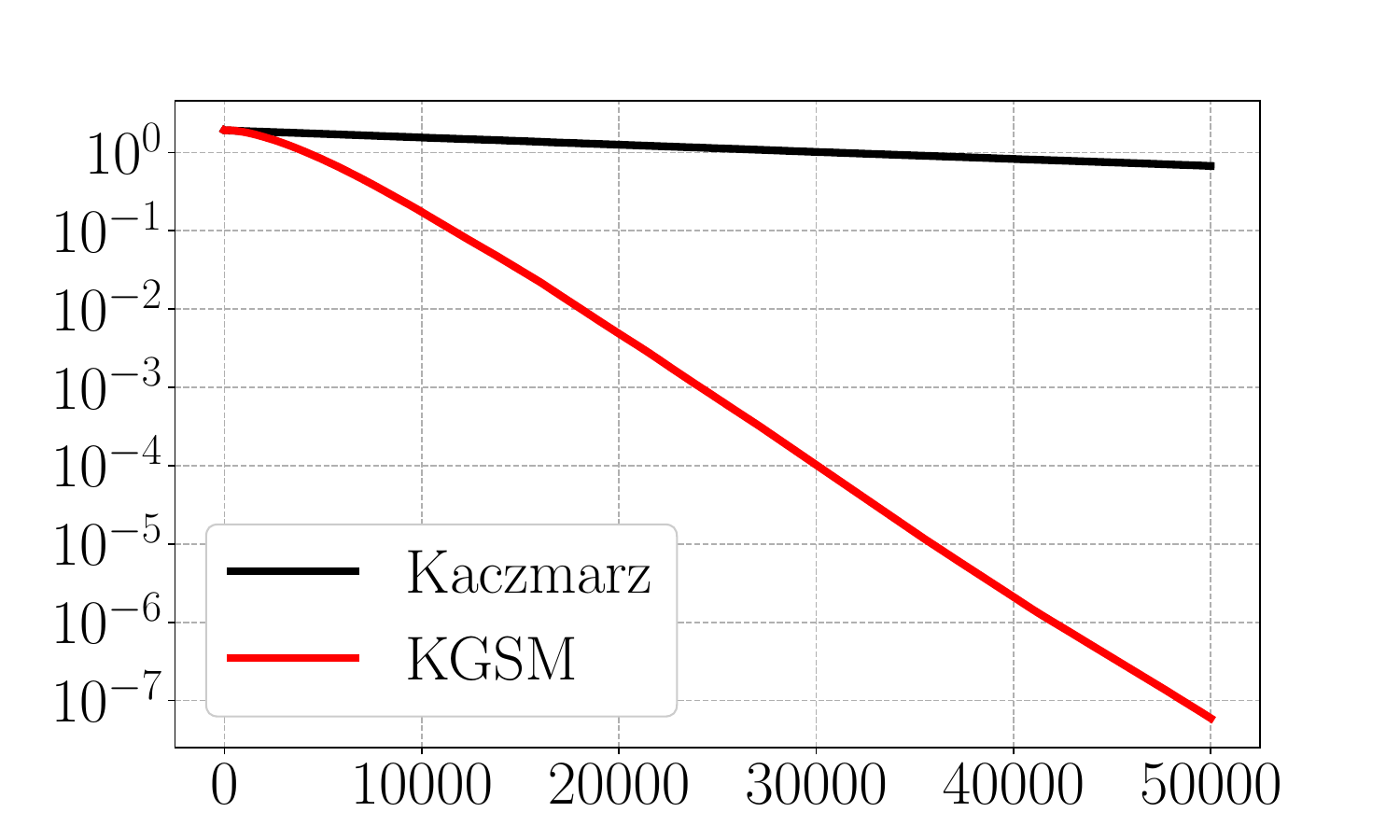}};
\node[anchor=north west] at (0.5\textwidth,0) {\includegraphics[width=0.355\textwidth,trim=30 0 30 0]{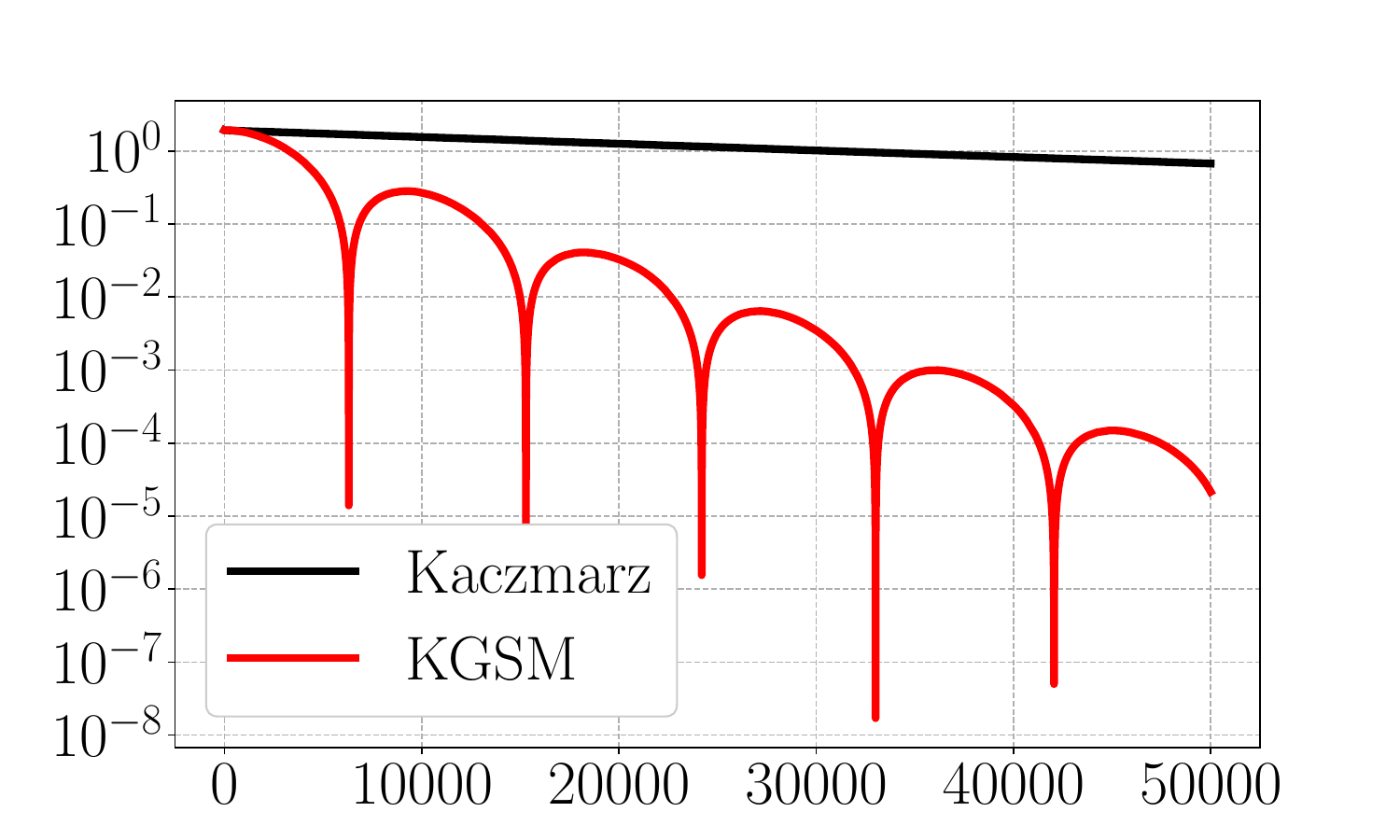}};
\node[anchor=north west] at (0,-0.3\textwidth) {\includegraphics[width=0.355\textwidth,trim=30 0 30 0]{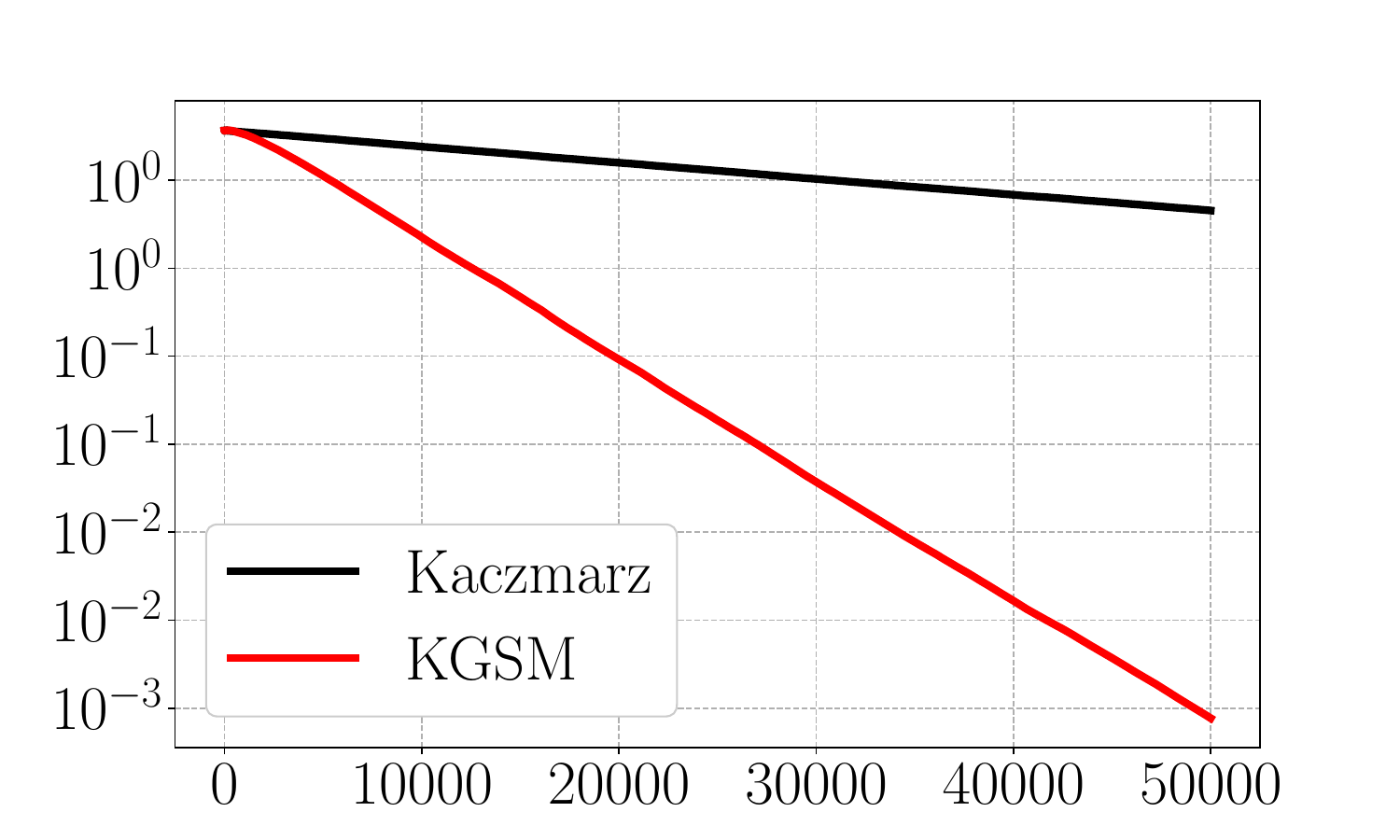}};
\node[anchor=north west] at (0.5\textwidth,-0.3\textwidth) {\includegraphics[width=0.355\textwidth,trim=30 0 30 0]{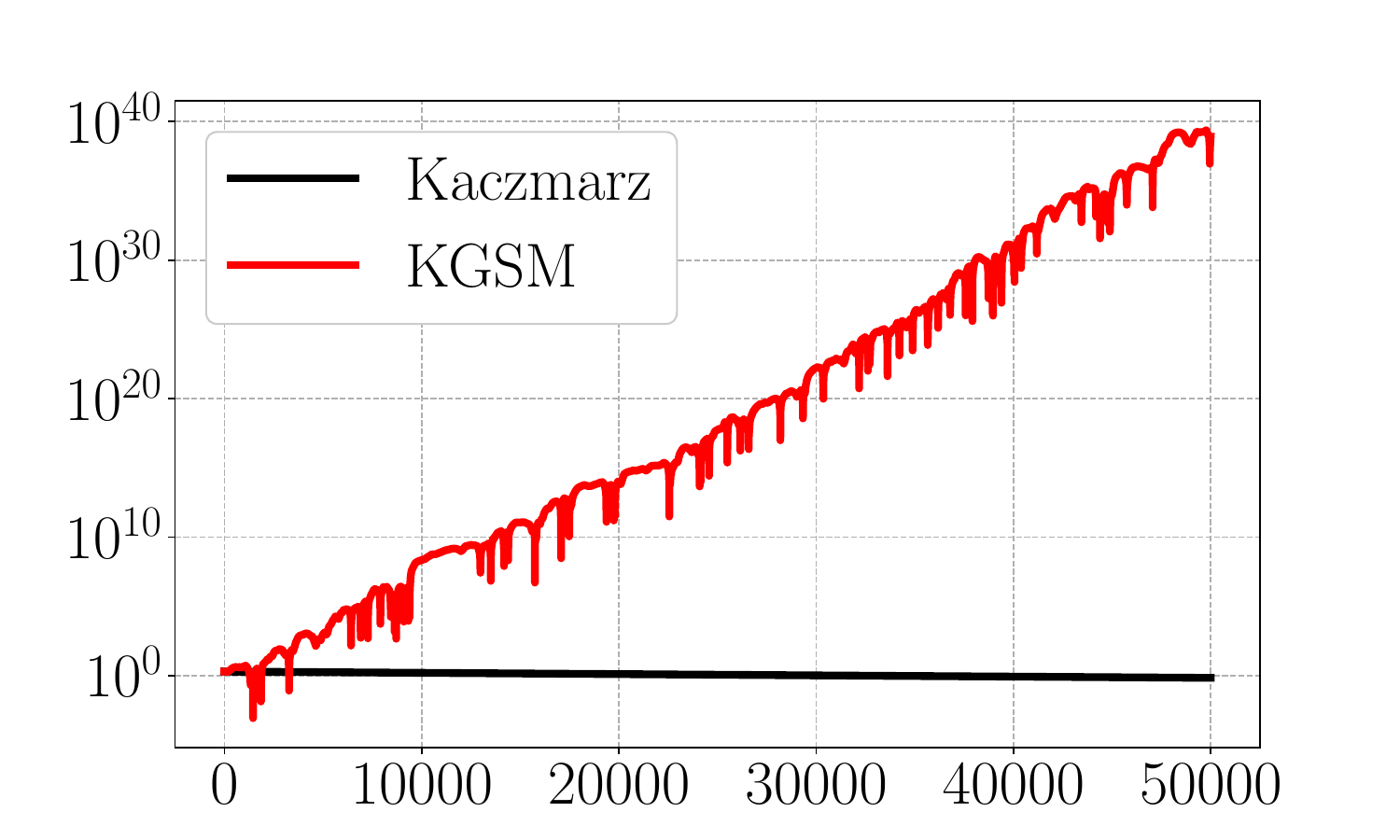}};

\node[draw=black, circle, fill=black, inner sep=0pt, minimum size=3mm] at (-0.01\textwidth, -0.05\textwidth) {};

\definecolor{mygreen}{HTML}{008000}
\begin{scope}[shift={(0.49\textwidth, -0.05\textwidth)}]
    \draw [mygreen, line width=1.3mm] (-0.15,0) -- (0.15,0); 
    \draw [mygreen, line width=1.3mm] (0,-0.15) -- (0,0.15); 
\end{scope}

\definecolor{myred}{HTML}{FF0000}
\node[draw=myred, rectangle, fill=myred, inner sep=0pt, minimum size=3mm] at (-0.01\textwidth, -0.35\textwidth) {};

\definecolor{myblue}{HTML}{0000FF}
\draw[draw=myblue, fill=myblue] (0.48\textwidth, -0.35\textwidth) -- ++(60:3mm) -- ++(-60:3mm) -- cycle;
\end{tikzpicture}
\caption{The error $|\langle x_k -x,v_{20}\rangle|$  for the randomized Kaczmarz \eqref{kaczmarz} and KGSM \eqref{eq:our-method2} for parameters $(M,\beta)$ indicated by markers labeling each plot, which correspond to the markers in Figure \ref{fig04}. For corresponding plots of $\ell^2$-norm error
$\|x_k - x \|_2$ and $|\langle x_k -x, v_{19} \rangle|$, see  \S \ref{additionalbetaplots}.}
\label{fig05}
\end{figure}

In the following, we briefly discuss the plot corresponding to each marker:
\subsubsection*{Black circle}
The parameters denoted by the black circle marker are those from 
\eqref{basicexparam} previously discussed in
\S \ref{basicex} and are included in Figure \ref{fig05} to provide context.

\subsubsection*{Red square} The parameters denoted by the red square marker are in the region where the eigenvalues $\lambda_1$ and $\lambda_2$ are real. The momentum parameter $M$ is smaller compared to the black circle marker, and as a result, the rate of convergence of KGSM is relatively slower; see Figure \ref{fig05}.

\subsubsection*{Green plus} The parameters denoted by the green plus marker are in the region where the eigenvalues $\lambda_1$ and $\lambda_2$ are complex, which results in oscillatory behavior; see Figure \ref{fig05} and the discussion in \S \ref{numericscomplex}. 

\subsubsection*{Blue triangle} The parameters denoted by the blue triangle marker are on the curve $\beta = 1 - \eta_{20}/(1 - \sqrt{M})^2$. The momentum parameter $M$ is larger compared to the black circle marker. Here, one might expect that KGSM converges faster relative to the black circle experiment. Instead, the method breaks down, and the iteration diverges; see Figure \ref{fig05}.  The following remark discusses setting the momentum parameter $M$.

\begin{remark}[Setting the momentum parameter $M$] \label{settingM}
Recall that by Corollary  \ref{coropt}, the expected signed error in the direction of the smallest singular vector $\mathbb{E} \langle x - x_k, v_{20} \rangle$ decreases by a factor of $1 - \eta_l/(1-\sqrt{M})$ each iteration. We can make this factor arbitrarily small by setting $M$ close to $(1-\sqrt{\eta_l})^2$. Numerical results indicate that each linear system has a critical value of $M$ beyond which the KGSM iteration fails to converge. Numerically, an effective value of $M$ can be determined by increasing $M$ until the methods diverge or using bisection to identify such a value. We pose several questions related to the critical value of $M$ in \S \ref{discussion}.
\end{remark}

\subsection{Periodic Spiking behavior} \label{periodicspikingbehavior} In this section, we discuss the periodic spiking behavior of the error $|\langle x_k -x, v_l\rangle|$ observed in Figure \ref{fig03} and Figure \ref{fig05}--Green plus. The existence of this periodic spiking behavior is explained by Corollary \ref{corarg} whose formula for $\mathbb{E} \langle x_k -x, v_l\rangle$ exhibits periodic spikes. Informally speaking, this behavior occurs when too much momentum is built in the direction $v_l$ (or $-v_l$), causing the error $|\langle x_k -x, v_l\rangle|$  to rapidly decrease and then subsequently rapidly increase as $x_k$ overshoots the solution. This overshooting behavior can be visualized by plotting the error $|\langle x_k - x, v_l \rangle|$ together with $\sign \langle x_k - x,v_l \rangle$, see Figure \ref{fig_sign}.

\begin{figure}[h!]
\centering
\begin{tabular}{cc}
\includegraphics[width = 0.45\textwidth,trim=30 0 0 0]{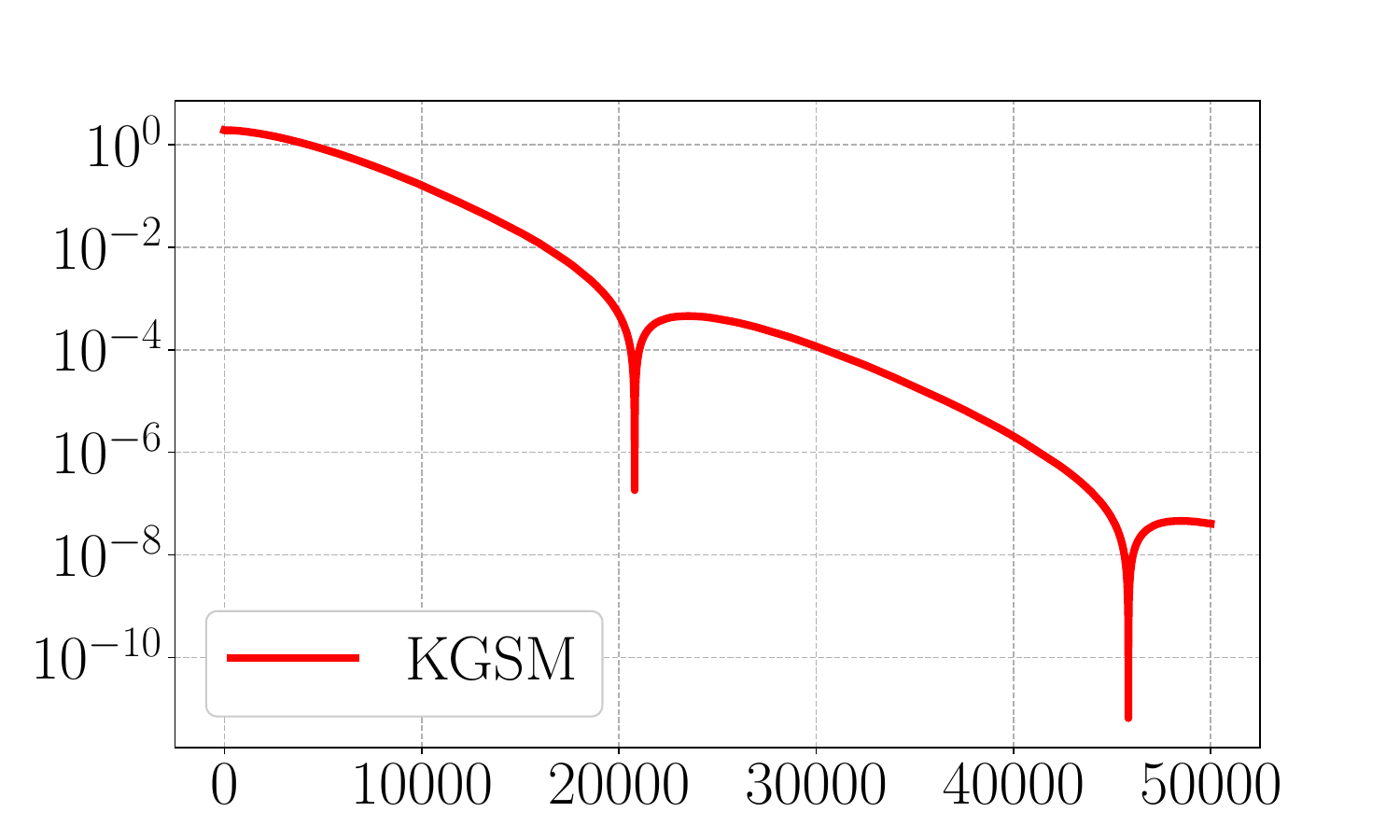}  &
\includegraphics[width = 0.45\textwidth,trim=30 0 0 0]{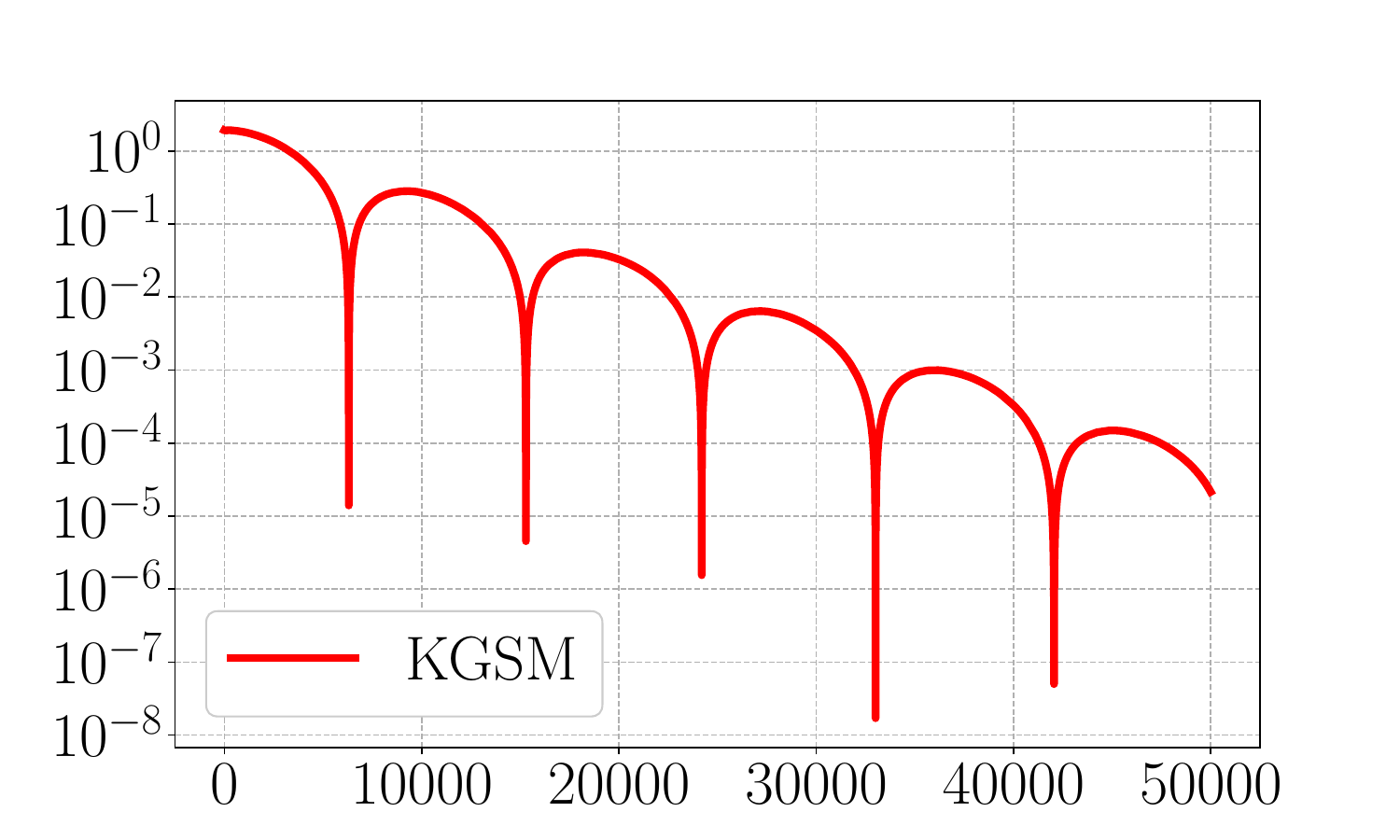} \\
\includegraphics[width = 0.45\textwidth,trim=30 0 0 0]{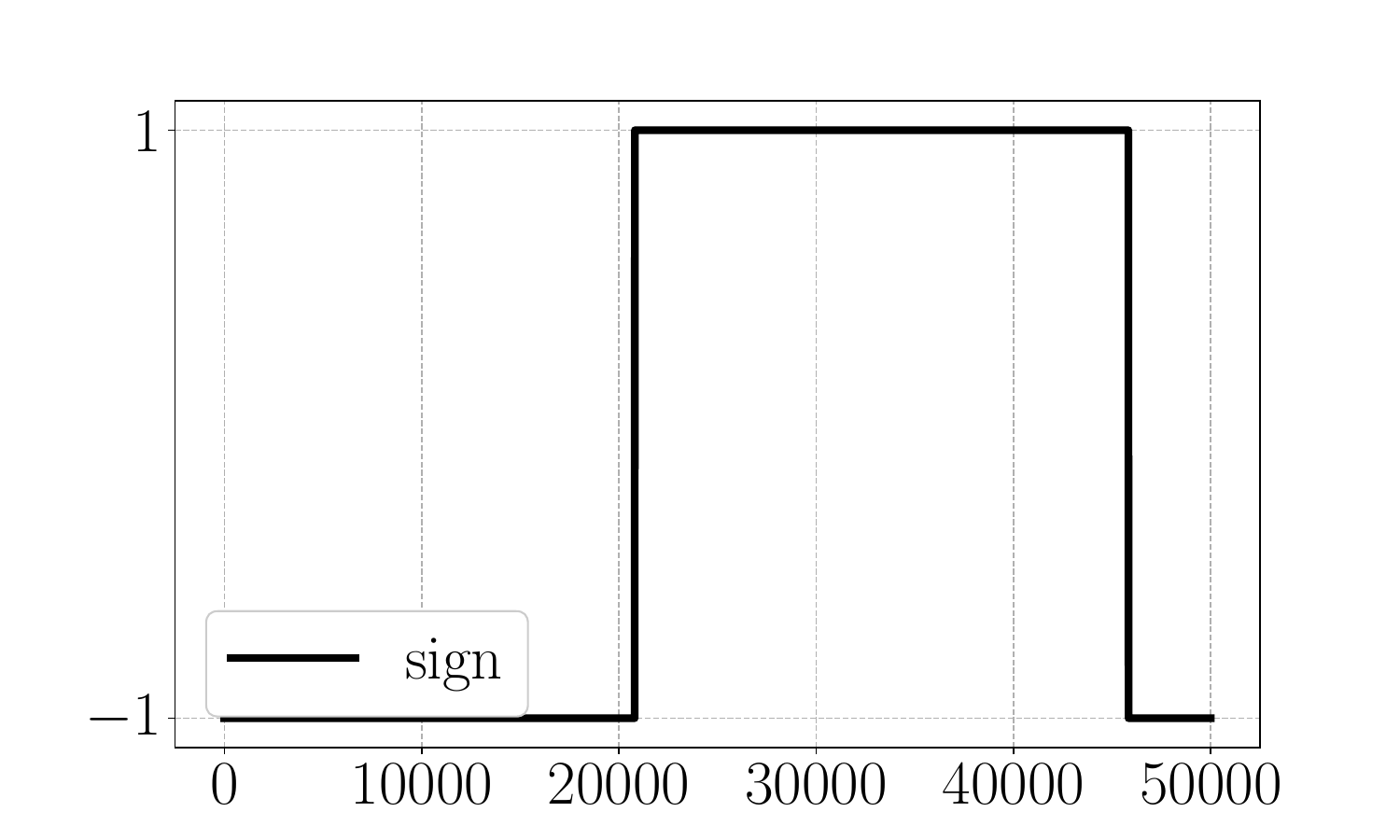} &
\includegraphics[width = 0.45\textwidth,trim=30 0 0 0]{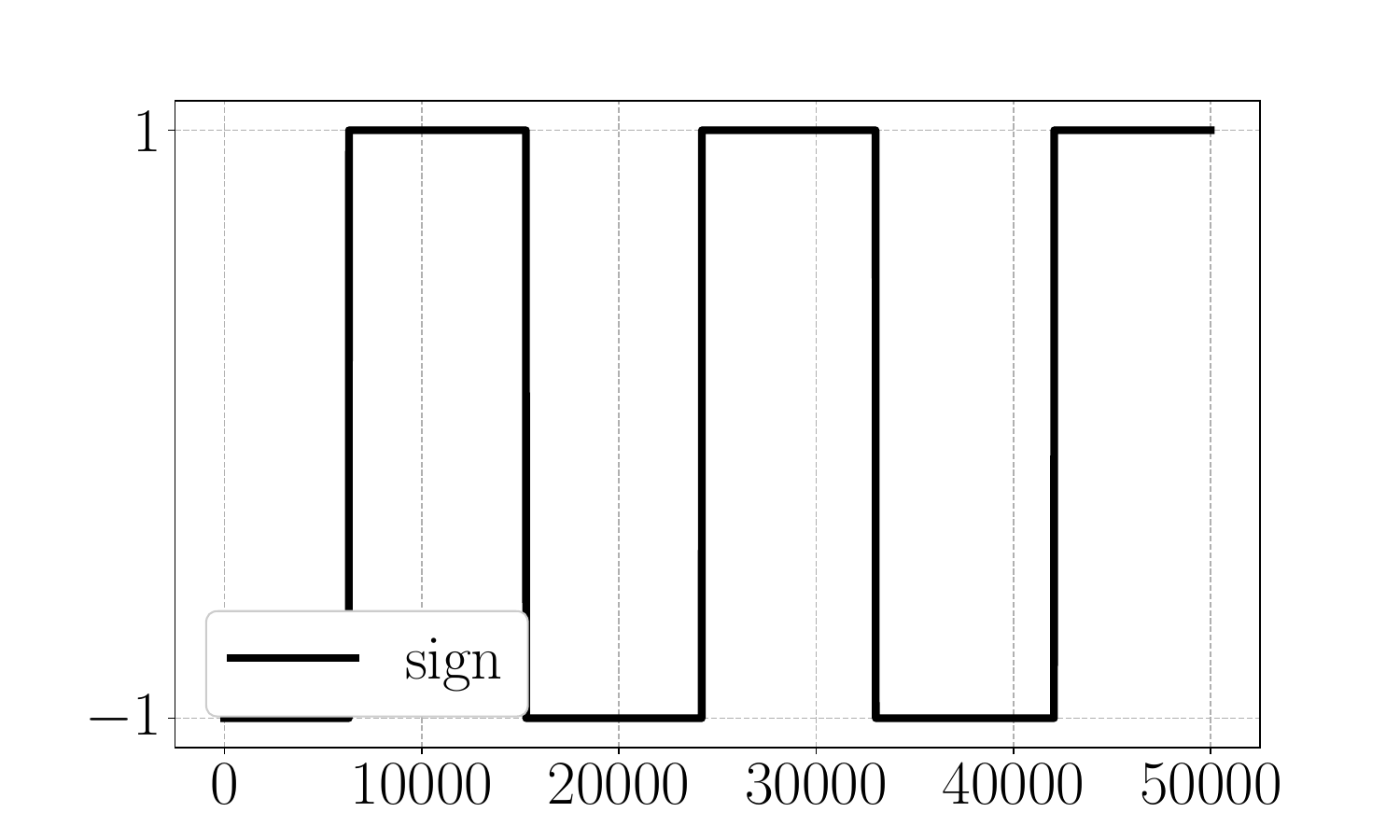} \\
\end{tabular}
\caption{ On the left, we plot  $| \langle x_k - x, v_{20} \rangle|$  for KGSM from the example of Figure \ref{fig03}, and the corresponding value of $\sign  \langle x_k - x, v_{20} \rangle$ . On the right, we plot 
$| \langle x_k - x, v_{20} \rangle|$ for KGSM on the example of Figure \ref{fig05}--Green plus, and the corresponding value of $\sign  \langle x_k - x, v_{20} \rangle$. }
\label{fig_sign}
\end{figure}

Observe that in each example illustrated in Figure \ref{fig_sign}, spikes 
in $| \langle x_k - x, v_{20} \rangle|$  corresponds to a change in 
$\sign  \langle x_k - x, v_{20} \rangle$. Note that this periodic spiking behavior is predicted by Theorem \ref{thm1}, and in turn, Corollary \ref{corarg}, in expectation. However, determining the locations of the spikes using Corollary \ref{corarg} would require the accurate computation of $\theta$ and $\theta_0$, which may be prohibitively computationally expensive. Additionally, it may be difficult to compute the location of the spikes due to numerical error.  For example, in Figure \ref{fig03}, observe that the KGSM theory curve accurately predicts the first spike but has become less accurate by the second spike. It may be possible and interesting to predict the spikes in the error during run time; see \S \ref{discussion} for further discussion.

\subsection{Linear distribution of singular values} \label{exlinear} So far, we have considered the case where the matrix $A$ only has one small singular value. Here, we consider cases where the singular values decay linearly. In this case, the KGSM method is still effective in accelerating the convergence rate, but the dynamics are no longer precisely predicted by Theorem \ref{thm1}.

Using the procedure described in \S \ref{numericsprelim}, we generate a random $100 \times 20$ matrix $A$ whose singular values are
$$
\sigma_1 = 1 , \ \sigma_2 = \frac{19}{20}, \quad \cdots \quad ,\sigma_{20} = \frac{1}{20}.
$$
We set
$$
M = 0.85, \quad \text{and} \quad \beta = 1 - \frac{\eta_{20}}{(1 - \sqrt{M})^2}.
$$
We choose a random initial vector $x_0$ (with independent standard normal entries) and run randomized Kaczmarz \eqref{kaczmarz} and KGSM \eqref{eq:our-method2}. At each iterate,  we compute the absolute error in the direction of the smallest right singular vector $|\langle x - x_k, v_{20} \rangle|$  and the $\ell^2$-norm error $\|x - x_k\|_2$,
see Figure \ref{fig06}. For comparison, we plot the theoretical estimates for 
$|\mathbb{E} \langle x_k - x, v_{20} \rangle|$ from \eqref{kaczmarzstef} and Corollary \ref{coropt}, respectively.

\begin{figure}[h!]
    \centering
    \includegraphics[width=.45\textwidth,trim=30 0 0 0]{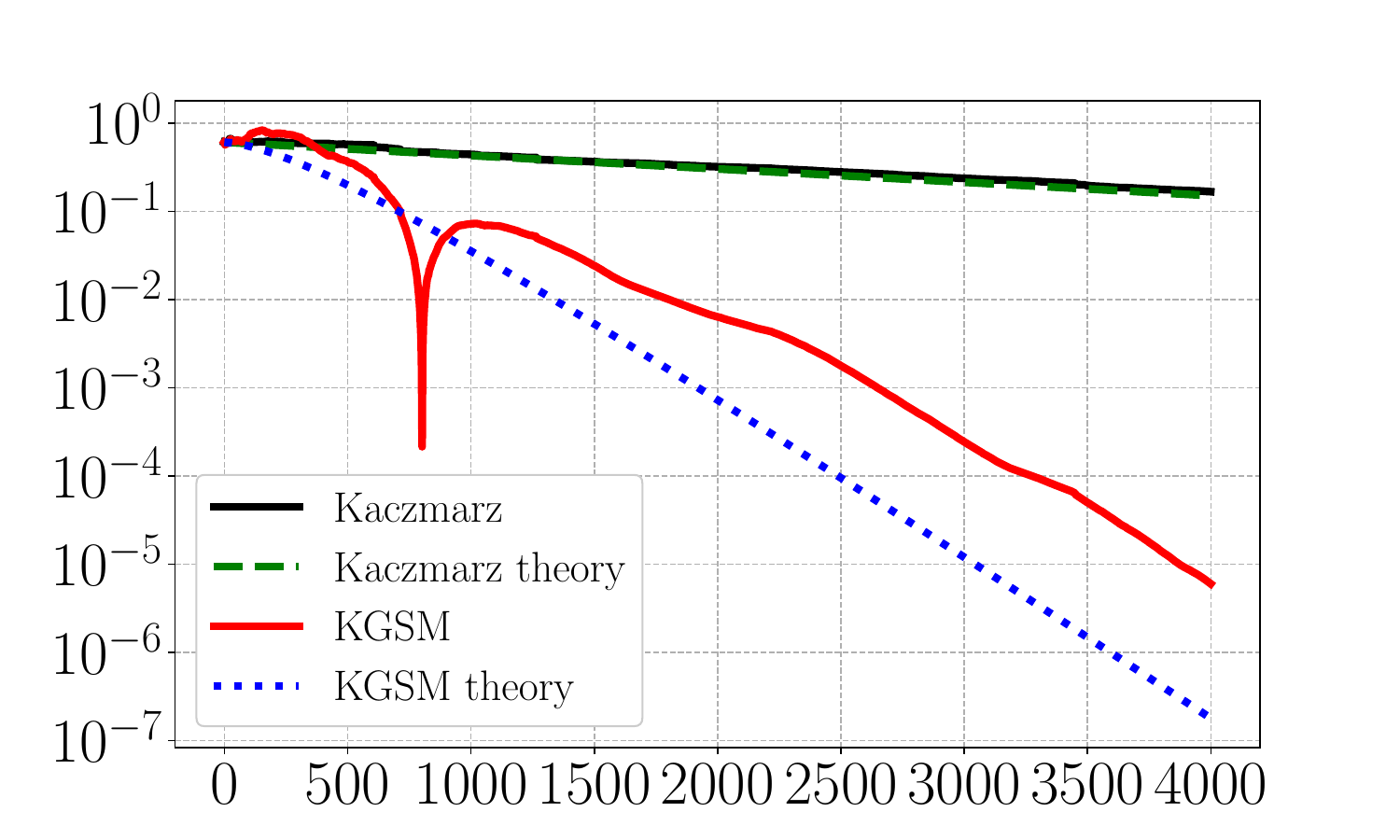}
    \includegraphics[width=.45\textwidth,trim=30 0 0 0]{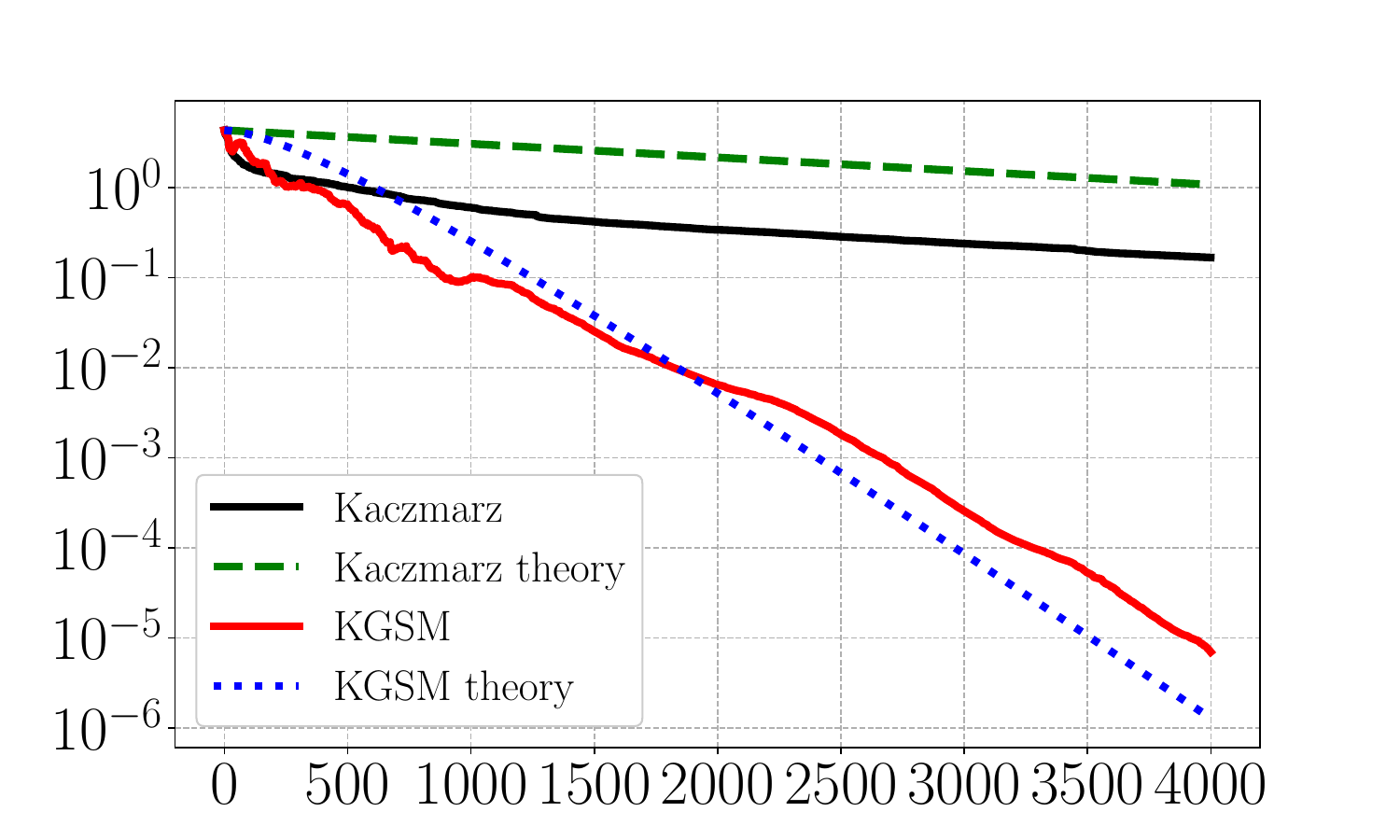}
    \caption{ The numerical error $|\langle x_k -x,v_{20}\rangle|$ (left) and $\|x_k -x\|_2$ (right) 
for randomized Kaczmarz  \eqref{kaczmarz} and KGSM \eqref{eq:our-method2}, and the theoretical estimates for $|\mathbb{E} \langle x_k - x, v_{20} \rangle|$
from  \eqref{kaczmarzstef} and Corollary \ref{coropt}, for the example of \S \ref{exlinear}.}
    \label{fig06}
\end{figure}

\begin{figure}[h!]
    \centering
    \includegraphics[width=.45\textwidth,trim=30 0 0 0]{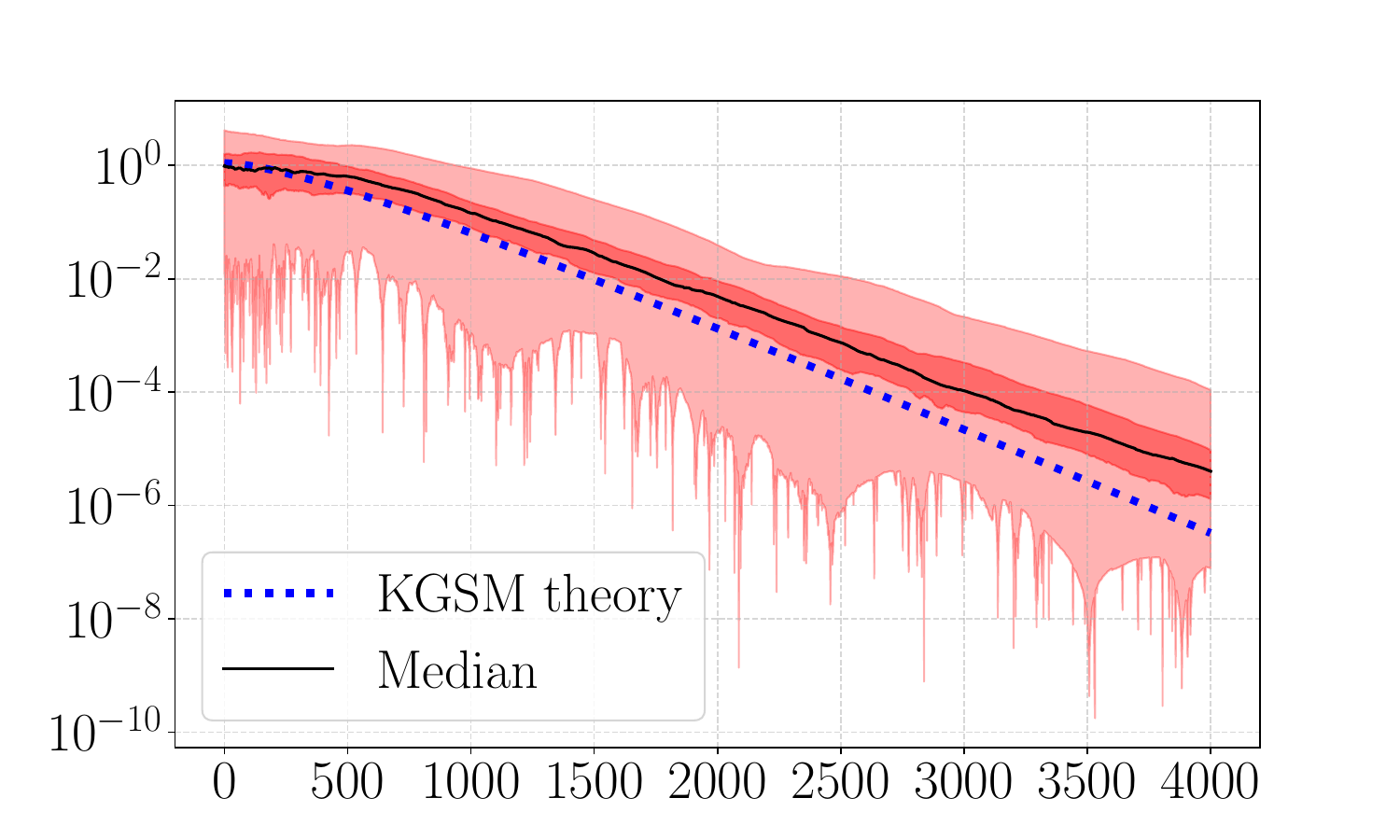}
    \includegraphics[width=.45\textwidth,trim=30 0 0 0]{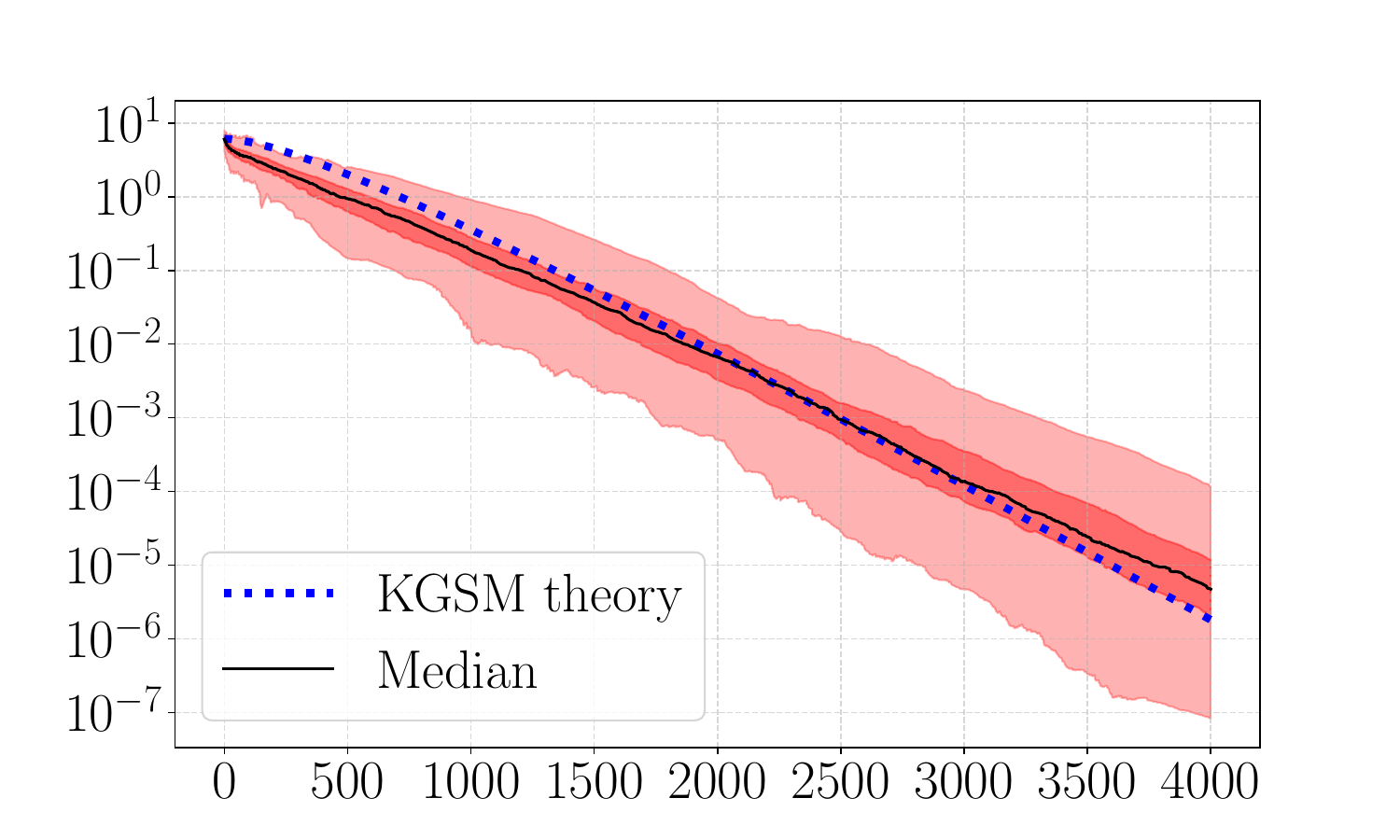}
    \caption{ We plot the 1st, 2nd, 3rd, and 4th quartiles as shaded regions 
   for $|\langle x_k - x, v_{20}\rangle|$ (left) and $\|x-x_k\|_2$ (right)  over $100$ trials of KGSM \eqref{eq:our-method2} on systems described in \S \ref{exlinear}. }
    \label{fig_averaged}
\end{figure}

The error $|\langle x_k -x,v_{20}\rangle|$ in Figure \ref{fig06} exhibits a notable spiking behavior. 
While parameters $(M,\beta)$ are set so that the eigenvalues $\lambda_1$ and $\lambda_2$ are real, the initial behavior of $|\langle x_k -x,v_{20}\rangle|$ is similar to that found in the complex eigenvalue case. In this case, the spiking is not periodic and does not resemble Corollary \ref{corarg}. This indicates that the spiking is due to either a numerical error or some initial behavior of the algorithm. To further explore this phenomenon, we run KGSM $100$ times and plot error quartiles; see Figure \ref{fig_averaged}.

Observe that the 1st quartile in Figure \ref{fig_averaged} shows that spikes in  $|\langle x_k - x, v_{20}\rangle|$ seem to appear more or less randomly. On the other hand, these spikes do not appear in the 4th quartile, indicating that the spikes do not affect the worst-case error.

\subsection{Many small singular values} \label{manybad} 
In the following, we consider the extreme case of a matrix where all of the singular values are small except for one. We repeat the same numerical experiment (with the parameters) described in \S \ref{exlinear} but construct the matrix $A$ to have singular values
$$
\sigma_1 = 1 ,\quad \text{and} \quad \sigma_{2} = \cdots = \sigma_{20} = 1/50.
$$
The results are plotted in Figure \ref{fig07}. 

\begin{figure}[h!]
    \centering
    \includegraphics[width=.45\textwidth,trim=30 0 0 0]{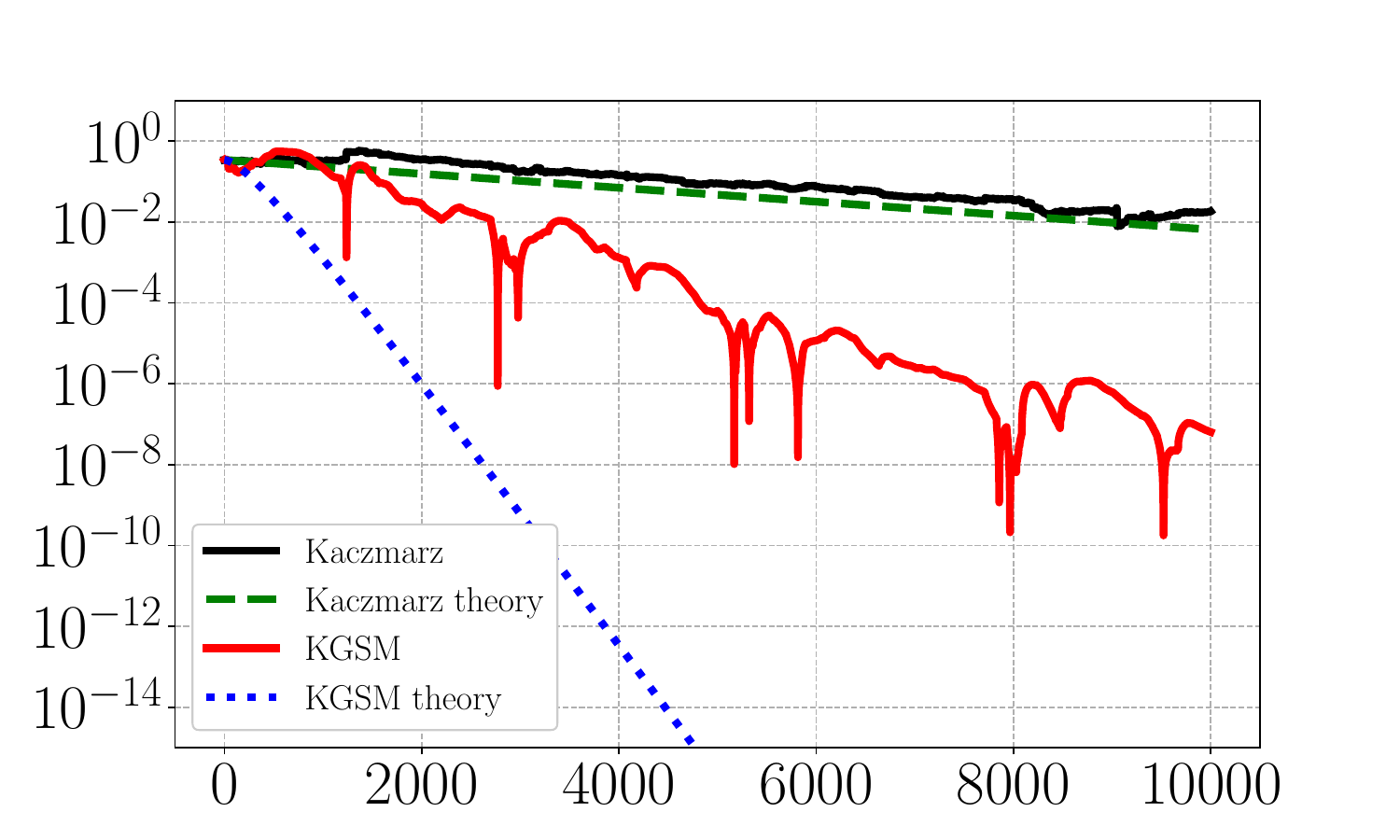}
        \includegraphics[width=.45\textwidth, trim=30 0 0 0]{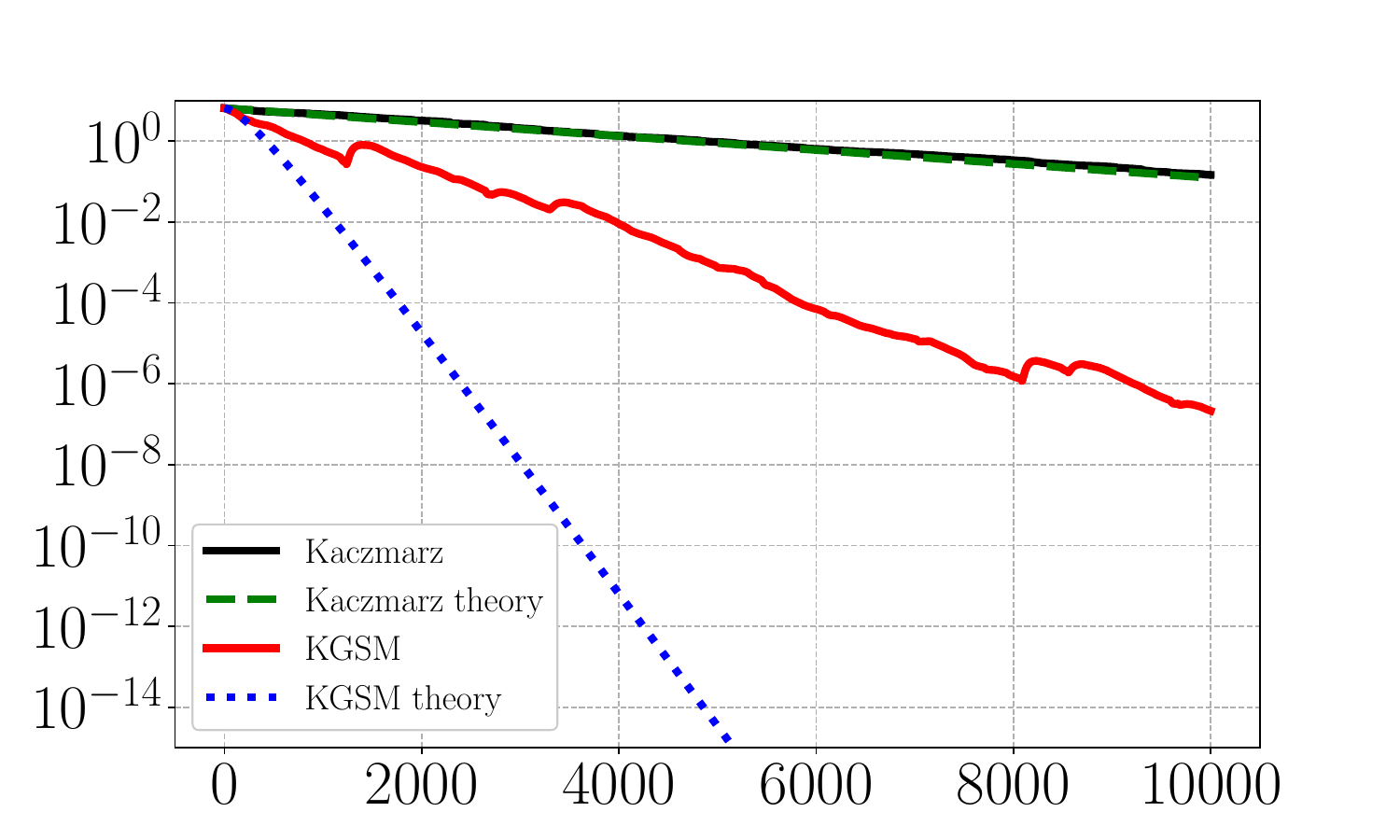}
    \caption{The numerical error $|\langle x_k -x,v_{20}\rangle|$ (left) and $\|x-x_k\|_2$ (right)
for randomized Kaczmarz  \eqref{kaczmarz} and KGSM \eqref{eq:our-method2}, and 
the theoretical estimates for $|\mathbb{E} \langle x_k - x, v_{20} \rangle|$
from  \eqref{kaczmarzstef} and Corollary \ref{coropt}, for the example of \S \ref{manybad}.}
    \label{fig07}
\end{figure}

The numerical example in Figure \ref{fig07} exhibits complicated dynamics. Observe that the theoretical estimate for the signed expected error from Corollary \ref{coropt} no longer seems to match the numerical error $|\langle x - x_k, v_{20} \rangle|$, even for a large number of iterations. Furthermore, the error includes multiple dips similar to the complex dynamics of \S \ref{numericscomplex}, but without a defined period.
This example was designed as a worst-case situation for KGSM.

\begin{remark}[Extending analysis to $\ell^2$-norm error]
Our proof technique does not seem to directly extend to the standard expected $\ell^2$-norm error analysis of the randomized Kaczmarz algorithm \cite{StrohmerVershynin2009}. Figure \ref{fig07} indicates a potential reason why extending the analysis might be challenging: the performance of KGSM seems to depend on the distribution of the singular values of the matrix $A$ rather than just the condition number. That being said, even in this challenging case, KGSM still converges significantly faster compared to the standard Kaczmarz algorithm in the $\ell^2$-norm and along singular vectors associated with the smallest singular value. It could be the case that the expected $\ell^2$-norm error decreases at a similar rate to $|\mathbb{E} \langle x_k -x, v_n \rangle|$ up to a constant for some region of parameters $(M,\beta)$, and that an $\ell^2$-norm error result can be established by identifying the correct way to introduce this constant factor. Another approach to study the $\ell^2$-norm error would be to study the sequence of errors $(\mathbb{E} |\langle x_k - x, v_l \rangle|)_{l=1}^n$ in the direction of all singular vectors simultaneously, which would lead to a detailed understanding of the dynamics, and may be easier than studying $\mathbb{E} |\langle x_k -x, v \rangle|$ for fixed $l$ in isolation. Further discussion of possible directions for extending our results is given in \S \ref{discussion}.
\end{remark}

\subsection{Comparison to $\ell^2$-norm error}
So far, we have been plotting the error 
$|\langle x_k - x, v_n \rangle |$ in the direction of the smallest singular value $v_n$. In this section, we compare 
the singular vector error $|\langle x_k - x, v_n \rangle |$ to the $\ell^2$-norm error $\|x - x_k\|_2$.
We start by considering the numerical example described in \S \ref{numericscomplex} and provide a side-by-side comparison of error in the direction of the smallest singular vector and the $\ell^2$-norm error in Figure \ref{fig08}.

\begin{figure}[h!]
\centering
\begin{tabular}{cc}
\includegraphics[width=.45\textwidth,trim= 30 0 30 0]{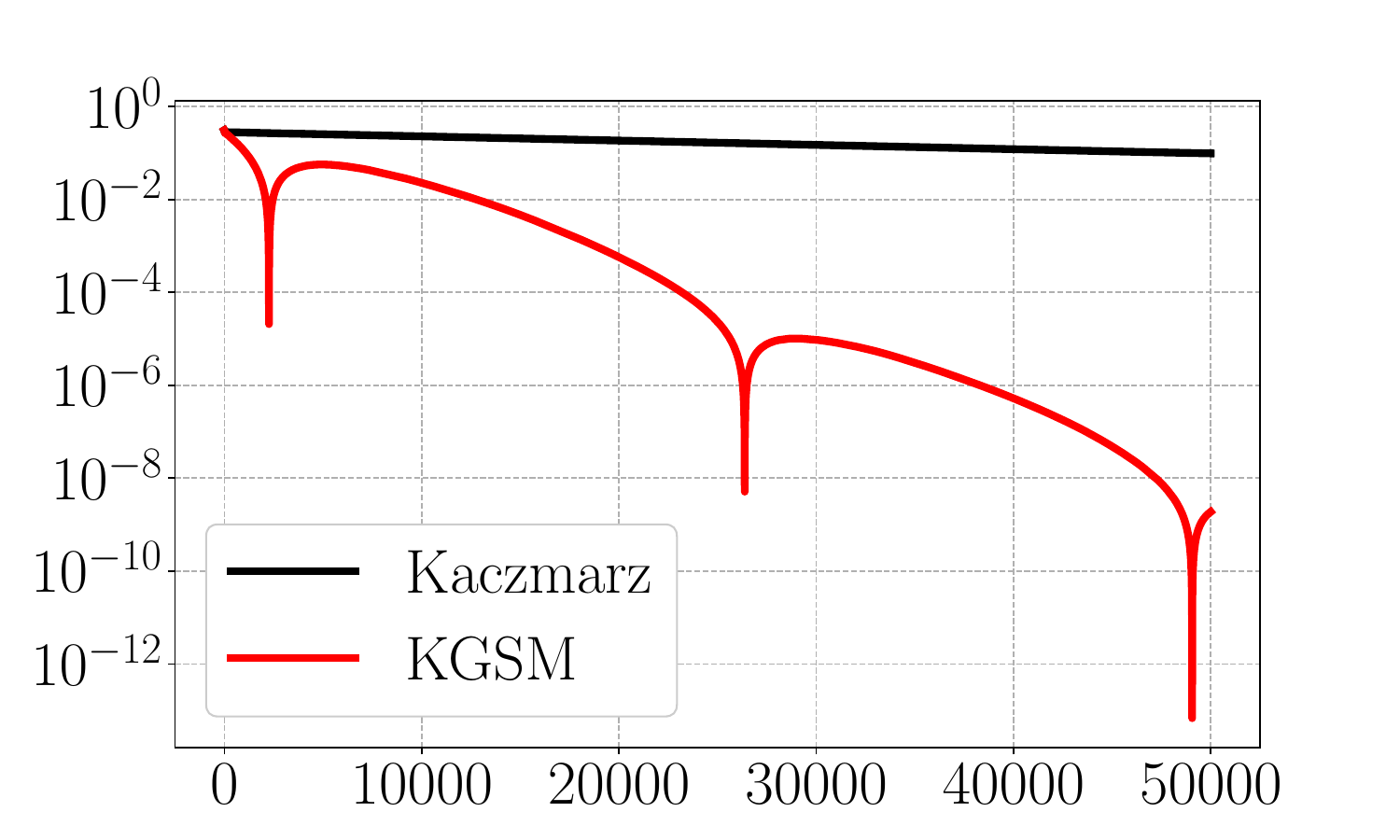} & 
\includegraphics[width=.45\textwidth,trim= 30 0 30 0]{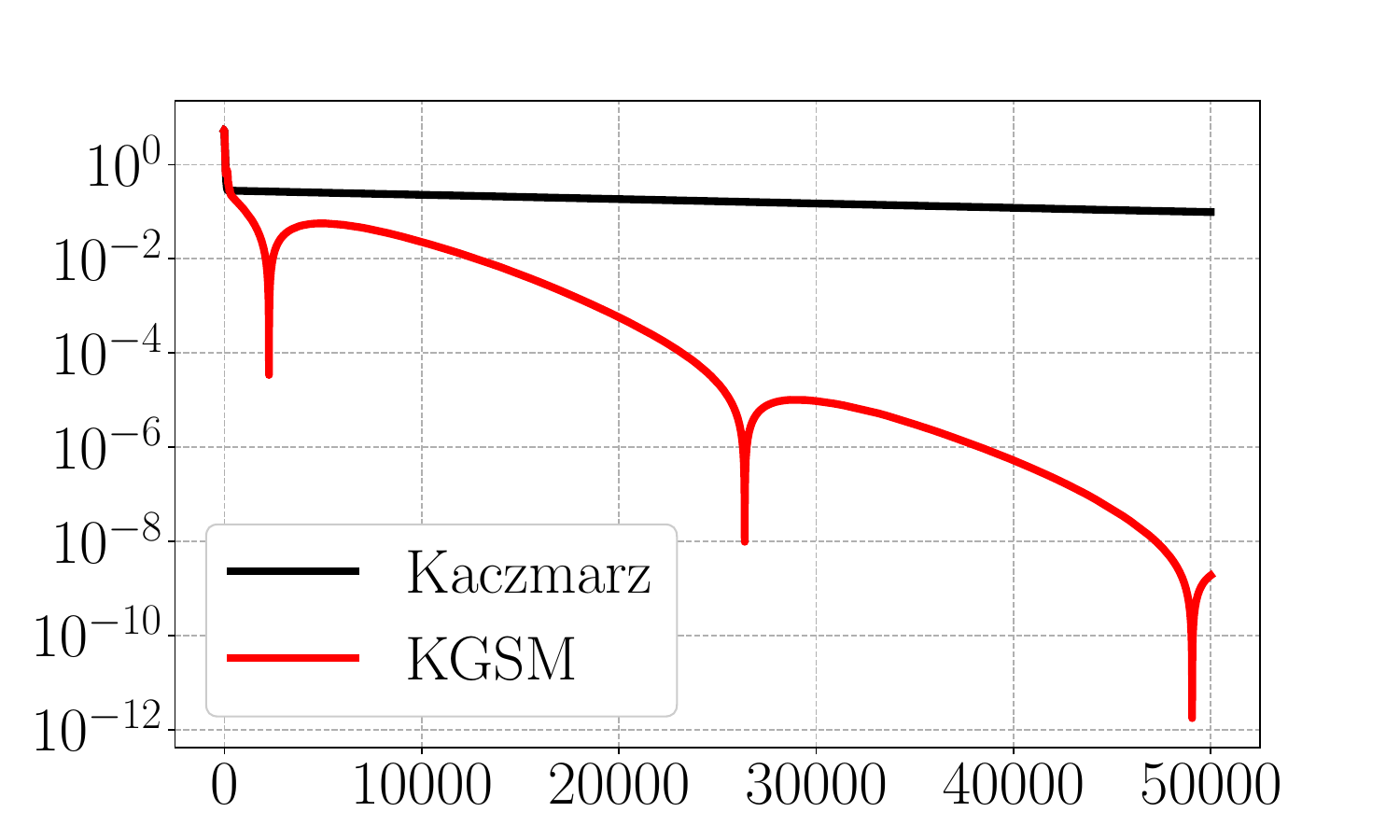} 
\end{tabular}
\caption{The error $|\langle x_k - x, v_{20}\rangle |$   (left) and 
$\|x_k-x\|_2$ (right) for  randomized Kaczmarz \eqref{kaczmarz} and KGSM \eqref{eq:our-method2} for a linear system with one small singular vector.}
\label{fig08}
\end{figure}

In the numerical example of Figure \ref{fig08}, where the underlying matrix only has one small singular value, the square error is roughly a scaled version of the error in the direction of the smallest singular vector. Next, we modify this numerical example by adding another small singular value. More precisely, we repeat the same numerical experiment with the same parameters but now set:
$$
\sigma_1 = \cdots = \sigma_{18} = 1 ,\quad \text{and} \quad \sigma_{19} = \sigma_{20} = 1/50.
$$
We plot the error in the direction of one of the smallest singular vectors in comparison to the $\ell^2$-norm error in Figure \ref{fig09}.

\begin{figure}[h!]
\centering
\begin{tabular}{cc}
\includegraphics[width=.45\textwidth,trim= 30 0 30 0]{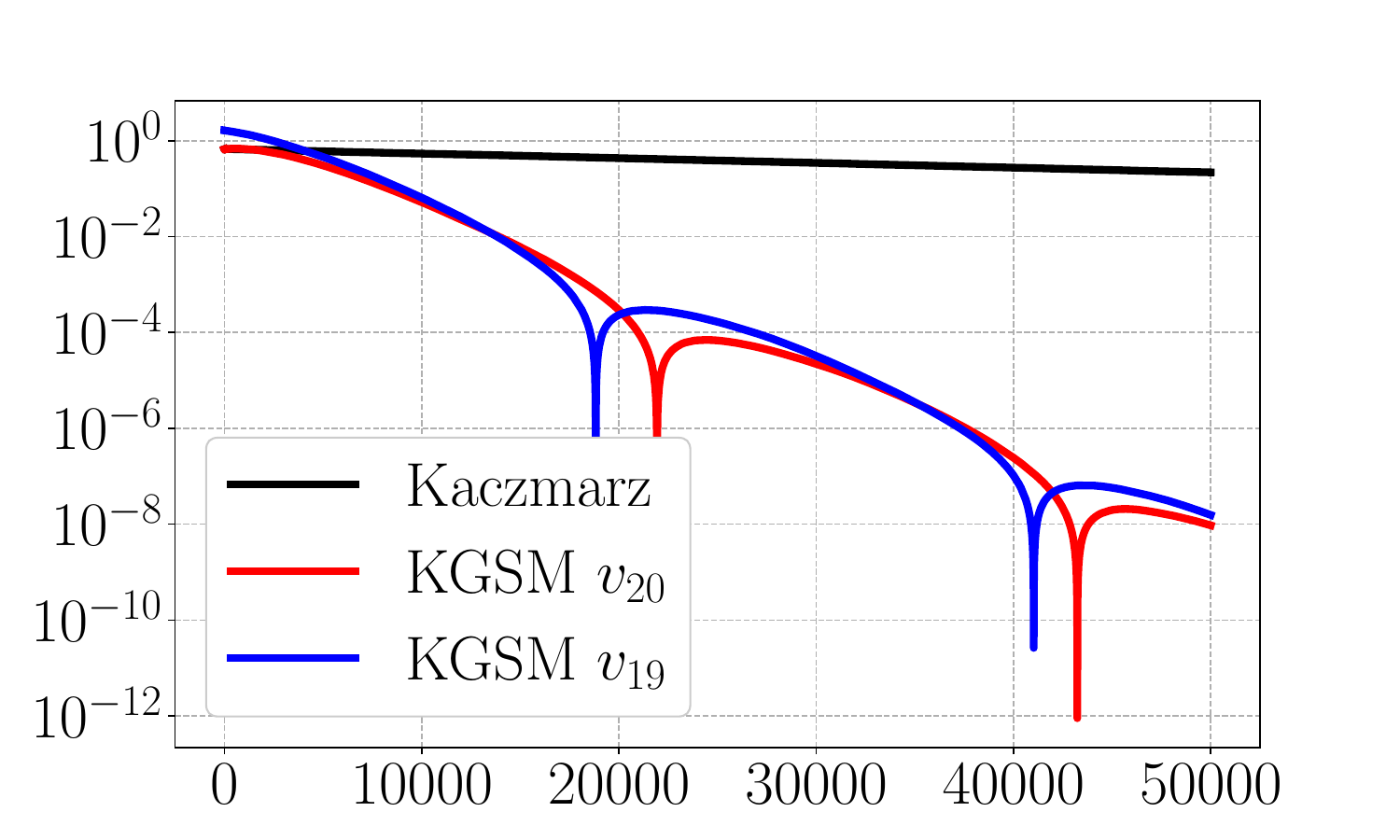} &
\includegraphics[width=.45\textwidth,trim= 30 0 30 0]{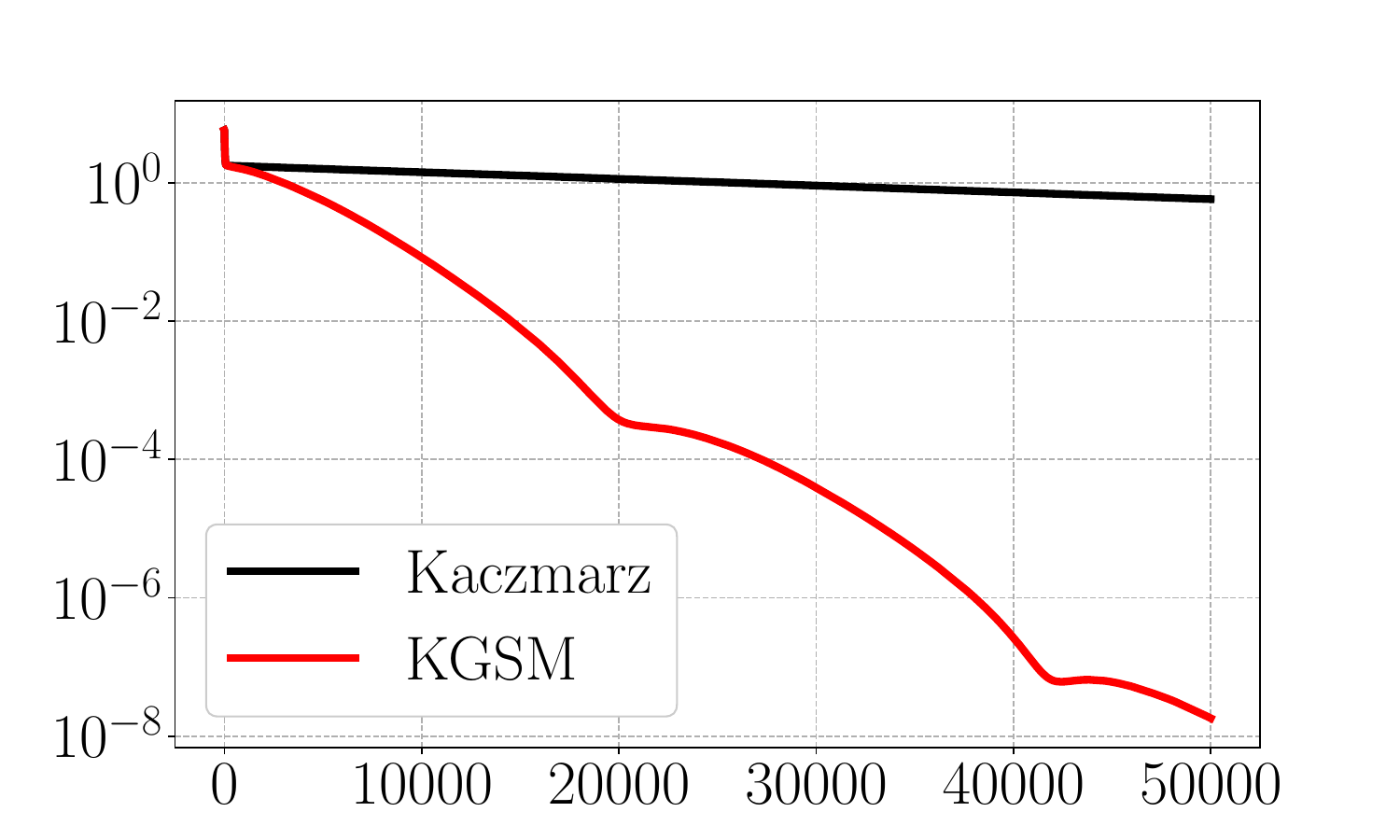} 
\end{tabular}
\caption{ The errors $|\langle x_k - x, v_{20}\rangle |$ and $|\langle x_k - x, v_{19}\rangle |$ (left) and 
$\|x_k-x\|_2$ (right) for  randomized Kaczmarz \eqref{kaczmarz} and KGSM \eqref{eq:our-method2} for a linear system with two small singular vectors.}
\label{fig09}
\end{figure}

In the numerical example in Figure \ref{fig09}, the $\ell^2$-norm error does not capture the finer features of the singular vector error since there are two small singular vectors whose errors are not perfectly synchronized due to numerical error.
The general rate of decrease is similar, but the finer features are lost. 

\section{Proof of main result} \label{proofmainresult}
This section gives the proof of Theorem \ref{thm1} and its corollaries.
\subsection{Notation and preliminaries}  \label{prelimproof}
Let $A$ be $m \times n$ matrix that is tall in the sense that $m \ge n$. Let $v_l$ denote the right singular vector of $A$ associated with the $l$-th largest singular value $\sigma_l$. The proof of the results uses the following lemma, which is the first step in the proof of Steinerberger \cite[Theorem 1.1]{Steinerberger2021}.
\begin{lemma} \label{lem1}
Suppose that $a^\top$ is a row of $A$ chosen with probability $\|a\|_2^2/ \|A\|_F^2$. Then, 
$$
\mathbb{E} \frac{\langle x, a \rangle \langle a, v_l \rangle}{\|a\|_2^2} = \frac{\sigma_l^2}{\|A\|_F^2} \langle x , v_l \rangle,
$$
for any $x \in \mathbb{R}^n$.
\end{lemma}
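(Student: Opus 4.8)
The plan is to expand the expectation as an explicit finite sum over the rows of $A$, observe that the sampling weights cancel the $\|a\|_2^2$ in the denominator, and then recognize the resulting sum as a quadratic form in $A^\top A$, for which $v_l$ is an eigenvector.

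First I would write $a_i^\top$ for the $i$-th row of $A$ and use that the random row $a$ equals $a_i$ with probability $p_i = \|a_i\|_2^2/\|A\|_F^2$, so that
\begin{equation*}
\mathbb{E}\,\frac{\langle x, a \rangle \langle a, v_l \rangle}{\|a\|_2^2}
= \sum_{i=1}^m \frac{\|a_i\|_2^2}{\|A\|_F^2}\cdot\frac{\langle x, a_i \rangle \langle a_i, v_l \rangle}{\|a_i\|_2^2}
= \frac{1}{\|A\|_F^2}\sum_{i=1}^m \langle x, a_i \rangle \langle a_i, v_l \rangle.
\end{equation*}
(Any zero row $a_i$ has probability zero and contributes nothing, so the cancellation is harmless.) Then I would rewrite each term as $\langle x, a_i \rangle \langle a_i, v_l \rangle = (x^\top a_i)(a_i^\top v_l) = x^\top (a_i a_i^\top) v_l$ and sum, using $\sum_{i=1}^m a_i a_i^\top = A^\top A$, to get $\sum_{i=1}^m \langle x, a_i \rangle \langle a_i, v_l \rangle = x^\top (A^\top A) v_l$.

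Finally, since $v_l$ is the right singular vector of $A$ associated with $\sigma_l$, it satisfies $A^\top A v_l = \sigma_l^2 v_l$, so $x^\top (A^\top A) v_l = \sigma_l^2 \langle x, v_l \rangle$; dividing by $\|A\|_F^2$ gives the claim. I do not expect a genuine obstacle here — the proof is a short direct computation. The only points requiring mild care are the degenerate case of zero rows and the bookkeeping that the Kaczmarz sampling probabilities exactly cancel the $\|a_i\|_2^2$ normalization, which is precisely what makes this the natural distribution for the identity.
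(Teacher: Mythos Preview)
Your proposal is correct and essentially identical to the paper's own proof: both expand the expectation as a weighted sum over rows, cancel the $\|a_i\|_2^2$ factors, recognize $\sum_i a_i a_i^\top = A^\top A$, and finish with $A^\top A v_l = \sigma_l^2 v_l$. The paper compresses this into a single displayed chain of equalities, while you spell out the intermediate steps (and add the harmless remark about zero rows), but there is no substantive difference.
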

We give the proof for completeness.
\begin{proof}[Proof of Lemma \ref{lem1}] Direct calculation gives
\begin{equation*}
\mathbb{E} \frac{\langle x, a \rangle \langle a, v_l \rangle}{\|a\|_2^2} =  \sum_{i=1}^m \frac{\|a_i\|_2^2}{\|A\|_F^2} \frac{x^\top a_{i} a_{i}^\top v_l}{\|a_i\|_2^2} = \frac{x^\top A^\top A v_l}{\|A\|_F^2}  =  \frac{\sigma_l^2 x^\top v_l }{\|A\|_F^2},
\end{equation*}
which completes the proof.
\end{proof}

\subsection{Proof of main result}
\begin{proof}[Proof of Theorem \ref{thm1}]
Given an initial $x_0$ and $\beta \in [0,1)$ and $M \in [0,1]$ let $y_0 = 0 \in \mathbb{R}^n$ be the zero vector. Suppose that $x_k$ and $y_k$ are defined iteratively by \eqref{eq:our-method2}. 
Without loss of generality, by the possibility of replacing $x_k$ with $x_k - x$, we can assume the solution $x$ is the zero vector. 
In this case, \eqref{eq:our-method2} reduces to
\begin{equation} \label{eq:our-method-00}
\left\{
\begin{array}{ccl}
   x_{k+1}& = & x_k -\displaystyle \frac{\langle x_k , a_{i_k} \rangle a_{i_k}}{\|a_{i_k}\|^2_2}   +M y_{k} \\[10pt]
   y_{k+1}& = & \beta y_k + (1-\beta)(x_{k+1}-x_k) .
\end{array} \right. 
\end{equation}
Inducting on $k$ gives
\begin{equation} \label{eq:yk}
y_k  = (1 - \beta) \sum_{j=1}^k\beta^{k-j}(x_j-x_{j-1}).
\end{equation}
 Indeed, the base case $k=0$ of the induction holds since an empty sum is equal to zero, and assuming the formula holds for $y_k$ and applying \eqref{eq:our-method-00} gives
\begin{equation} \label{eq:yk2}
\begin{split}
y_{k+1} &= \beta\left( (1-\beta) \sum_{j=1}^k\beta^{k-j}(x_j-x_{j-1}) \right) + (1-\beta)(x_{k+1} - x_k) , \\
&= (1-\beta) \sum_{j=1}^{k+1} \beta^{k+1-j}(x_j-x_{j-1}).
\end{split}
\end{equation}
Observe that splitting the sum over $j$ in  \eqref{eq:yk} into two sums, shifting the indices, and combining terms gives
\begin{equation} \label{eq:expandsum}
\begin{split}
\sum_{j=1}^k\beta^{k-j} (x_j-x_{j-1}) &= \sum_{j=1}^k\beta^{k-j}x_j - \sum_{j=1}^k\beta^{k-j} x_{j-1} , \\
&= \sum_{j=1}^k\beta^{k-j} x_j - \sum_{j=0}^{k-1}\beta^{k-j-1} x_{j}, \\
&= x_k - (1-\beta) \left( \sum_{j=1}^{k-1}  \beta^{k-j-1}  x_j \right)  - \beta^{k-1} x_0 .
\end{split}
\end{equation}
It follows that
\begin{equation} \label{ykformula}
y_k = (1-\beta) x_k - (1-\beta)^2 \left( \sum_{j=1}^{k-1}  \beta^{k-j-1}  x_j \right) - 
(1-\beta) \beta^{k-1} x_0.
\end{equation}
Combining \eqref{eq:our-method-00}, \eqref{eq:yk}, \eqref{eq:yk2}, and \eqref{eq:expandsum} gives
\begin{equation} \label{expandxkk}
\begin{split}
x_{k+1} =& \big(1 + M(1-\beta) \big)  x_k - \frac{\langle x_k , a_{i_k} \rangle}{\|a_{i_k}\|_2^2} a_{i_k} \\   &- M(1 - \beta)^2 \left( \sum_{j=1}^{k-1} \beta^{k-j-1} x_j \right) -
M(1-\beta) \beta^{k-1} x_0.
\end{split}
\end{equation}
Recall that $v_l$ denotes the right singular vector associated with the $l$-th largest singular value $\sigma_l$ of $A$. Taking the inner product of \eqref{expandxkk} with $v_l$ gives
\begin{equation*}
\begin{split}
\langle x_{k+1}, v_l \rangle =& 
\big(1 + M(1-\beta)\big) \langle x_k, v_l \rangle - \frac{\langle x_k , a_{i_k} \rangle \langle a_{i_k}, v_l \rangle}{\|a_{i_k}\|^2_2}  \\ 
& -M(1 - \beta)^2 \left(  \sum_{j=1}^{k-1} \beta^{k-j-1} \langle x_j, v_l \rangle  \right)
- M(1-\beta) \beta^{k-1} \langle x_0, v_l \rangle .
\end{split}
\end{equation*}
Let $\mathbb{E}_{i_0,\ldots,i_k-1}$ denote the expectation conditional on $i_0,\ldots,i_{k-1}$. We have
\begin{equation} \label{mainlong}
\begin{split}
\mathbb{E}_{i_0,...,i_{k-1}}\langle x_{k+1} , v_l \rangle = & \langle x_{k} , v_l \rangle\left( 1 - \frac{\sigma_l^2}{\|A \|_F^2}+M(1-\beta)\right) \\ &-M(1-\beta)^2 \left(\sum_{j=1}^{k-1}\beta^{k-j-1}{\langle x_j,v_l \rangle} \right)- M(1-\beta) \beta^{k-1} \langle x_0, v_l \rangle,
\end{split}
\end{equation}
where the term $\sigma_l^2/\|A\|_F^2$ results from applying Lemma \ref{lem1}. Set
\begin{equation} \label{defrmsn}
\begin{split}
r &:= 1 - \frac{\sigma_l^2}{\|A \|_F^2}+M(1-\beta), \\
\zeta &:= M(1-\beta)^2, \\
S_n &:= \left( \sum_{j=1}^{n}\beta^{n-j}{\langle x_j,v_l \rangle} \right) + \frac{\beta^n}{1-\beta} \langle x_0, v_l \rangle.
\end{split}
\end{equation}
With this notation, \eqref{mainlong} can be written as 
\begin{equation} \label{base}
\mathbb{E}_{i_0,...,i_{k-1}}\langle x_{k+1} , v_l \rangle = r\langle x_k, v_l \rangle - \zeta S_{k-1}.
\end{equation}
For any fixed integer $k \ge 1$, and $j = 1,2,\ldots,k$, we claim that
\begin{equation} \label{eq:key-expansion}
  \mathbb{E}_{i_0,...,i_{k-j}}\langle x_{k+1} , v_l \rangle =   
    \begin{bmatrix}
        r \\
        \zeta \\
    \end{bmatrix}^\top
    \begin{bmatrix}
        r & \zeta \\
        -1 & \beta \\
    \end{bmatrix}^{j-1}
        \begin{bmatrix}
        \langle x_{k+1-j}, v_l \rangle \\ -S_{k-j}
    \end{bmatrix}.
\end{equation}
For fixed integer $k \ge 1$,  we prove this expression by induction on $j$. In base case $j=1$ of the induction, \eqref{eq:key-expansion} reduces to \eqref{base}. Assume that \eqref{eq:key-expansion} holds up to $j$. By the tower property of conditional expectation, we have $\mathbb{E}_{i_0,...,i_{k-j-1}}
\mathbb{E}_{i_0,...,i_{k-j}} = \mathbb{E}_{i_0,...,i_{k-j-1}}$. Thus,
taking the conditional expectation $\mathbb{E}_{i_0,...,i_{k-j-1}}$ of both sides gives
\begin{equation*} 
\begin{split}
  \mathbb{E}_{i_0,...,i_{k-j-1}}
  \langle x_{k+1} , v_l \rangle &=   
  \mathbb{E}_{i_0,...,i_{k-j-1}}
     \begin{bmatrix}
        r \\
        \zeta \\
    \end{bmatrix}^\top
    \begin{bmatrix}
        r & \zeta \\
        -1 & \beta \\
    \end{bmatrix}^{j-1}
     \begin{bmatrix}
        \langle x_{k-j+1}, v_l \rangle \\ -S_{k-j}
    \end{bmatrix},\\
    &=   
      \begin{bmatrix}
        r \\
        \zeta \\
    \end{bmatrix}^\top
    \begin{bmatrix}
        r & \zeta \\
        -1 & \beta \\
    \end{bmatrix}^{j-1}
      \mathbb{E}_{i_0,...,i_{k-j-1}}
        \begin{bmatrix}
        \langle x_{k-j+1}, v_l \rangle \\ -S_{k-j}
    \end{bmatrix}, \\
    &=   
      \begin{bmatrix}
        r \\
        \zeta \\
    \end{bmatrix}^\top
    \begin{bmatrix}
        r & \zeta \\
        -1 & \beta \\
    \end{bmatrix}^{j-1}
        \begin{bmatrix}
        r \langle x_{k-j}, v_l \rangle - \zeta S_{k-j-1} \\ 
      -\langle x_{k-j}, v_l \rangle - \beta S_{k-j-1}
    \end{bmatrix}, \\
        &=   
      \begin{bmatrix}
        r \\
        \zeta \\
    \end{bmatrix}^\top
    \begin{bmatrix}
        r & \zeta \\
        -1 & \beta \\
    \end{bmatrix}^{j}
        \begin{bmatrix}
         \langle x_{k-j}, v_l \rangle \\ 
       -  S_{k-j-1}
    \end{bmatrix}, \\
\end{split}
\end{equation*}
where the second equality follows from the linearity of expectation, the third equality follows from \eqref{base} and the definition of $S_k$, and the fourth equality follows from factoring out
\[
    \begin{bmatrix}
        r & \zeta \\
        -1 & \beta \\
    \end{bmatrix}
\]
 from the right vector. Setting $j = k-1$ gives
\begin{equation} \label{eqqad}
  \mathbb{E}_{i_0} \langle x_{k+1} , v_l \rangle =   
    \begin{bmatrix}
        r \\
        \zeta \\
    \end{bmatrix}^\top
    \begin{bmatrix}
        r & \zeta \\
        -1 & \beta \\
    \end{bmatrix}^{k-1}
        \begin{bmatrix}
        \langle x_1, v_l \rangle \\ 
       -S_0
    \end{bmatrix}.
\end{equation}
Taking the full expectation of \eqref{eqqad} gives
\begin{equation}
\begin{split}
 \mathbb{E} \langle x_{k+1} , v_l \rangle &=   
    \begin{bmatrix}
        r \\
        \zeta \\
    \end{bmatrix}^\top
    \begin{bmatrix}
        r & \zeta \\
        -1 & \beta \\
    \end{bmatrix}^{k-1}
       \mathbb{E} 
    \begin{bmatrix}
        \langle x_1, v_l \rangle \\ 
       -S_0
    \end{bmatrix}, \\
    &=   
    \begin{bmatrix}
        r \\
        \zeta \\
    \end{bmatrix}^\top
    \begin{bmatrix}
        r & \zeta \\
        -1 & \beta \\
    \end{bmatrix}^{k-1}
\begin{bmatrix}
   (1-\sigma_l^2/\|A\|_F^2) \langle x_0,v_l \rangle \\ 
   -\frac{1}{1-\beta} \langle x_0, v_l \rangle
\end{bmatrix}, \\
    &=   
    \begin{bmatrix}
        r \\
        \zeta \\
    \end{bmatrix}^\top
    \begin{bmatrix}
        r & \zeta \\
        -1 & \beta \\
    \end{bmatrix}^{k-1}
    \begin{bmatrix}
   r - \frac{\zeta}{1-\beta} \\ 
   -\frac{1}{1-\beta}
\end{bmatrix}
 \langle x_0, v_l \rangle,
\end{split}
\end{equation}
where the final equality follows from the definition of $r$ and $\zeta$, see \eqref{defrmsn}.  In addition since 
\[
\begin{bmatrix}
  r & \zeta \\ 
  -1 & \beta 
\end{bmatrix}
\begin{bmatrix}
    1 \\ \frac{-1}{1-\beta}
\end{bmatrix}
=
\begin{bmatrix}
    r - \frac{\zeta}{1-\beta} \\ -\frac{1}{1-\beta}
\end{bmatrix},
\]
we have proved that when $k \ge 1$, we have
\begin{equation} \label{formula}
 \mathbb{E} \langle x_{k+1} , v_l \rangle =   
    \begin{bmatrix}
        r \\
        \zeta \\
    \end{bmatrix}^\top
    \begin{bmatrix}
        r & \zeta \\
        -1 & \beta \\
    \end{bmatrix}^{k}
    \begin{bmatrix}
   1 \\ 
   -\frac{1}{1-\beta}
\end{bmatrix}
 \langle x_0, v_l \rangle.
\end{equation}
Note that so far, we have assumed $k \ge 1$. To complete the proof, it remains to consider the case when $k=0$. On one hand computing the expectation  $\mathbb{E} \langle x_1, v_l \rangle$ using \eqref{eq:our-method-00} gives
\begin{equation} \label{way1}
\mathbb{E} \langle x_1, v_l \rangle = \langle x_0, v_l \rangle\left( 1 - \frac{\sigma_l^2}{\|A\|_F^2} \right) = \left( r - \frac{\zeta}{1-\beta} \right)
 \langle x_0, v_l \rangle ;
 \end{equation}
on the other hand, setting $k=0$ on the right-hand side of \eqref{formula} gives
\begin{equation}  \label{way2}
    \begin{bmatrix}
        r \\
        \zeta \\
    \end{bmatrix}^{\top}
    \begin{bmatrix}
   1 \\ 
   -\frac{1}{1-\beta}
\end{bmatrix}
 \langle x_0, v_l \rangle
 = \left( r - \frac{\zeta}{1 - \beta} \right)
 \langle x_0, v_l \rangle.
\end{equation}
Since the right-hand side of \eqref{way1} and \eqref{way2} are equal, we conclude that the case $k=0$ holds, and thus we have established
\begin{equation} 
 \mathbb{E} \langle x_{k+1} , v_l \rangle =   
    \begin{bmatrix}
        r \\
        \zeta \\
    \end{bmatrix}^\top
    \begin{bmatrix}
        r & \zeta \\
        -1 & \beta \\
    \end{bmatrix}^{k}
    \begin{bmatrix}
   1 \\ 
   -\frac{1}{1-\beta}
\end{bmatrix}
 \langle x_0, v_l \rangle,
\end{equation}
for all $k \ge 0$. This completes the proof.
\end{proof}

\subsection{Proof of corollaries} \label{proofcors}
\begin{proof}[Proof of Corollary \ref{cormin}] 
Let $M \in [0,1]$ and $0 < \eta_l \le 1$ be fixed. By \eqref{eqeigenvalues} we have
\begin{equation} \label{lambda1eqpf}
\lambda_1(\beta) := \frac{r + \beta + \sqrt{(r-\beta)^2 - 4 \zeta}}{2}.
\end{equation}
 Recall that $r,\zeta,$ and $\eta_l$ are defined by 
$$
r = 1 - \eta_l +M(1-\beta), \quad
\zeta = M(1-\beta)^2, \quad \text{and} \quad 
\eta_l = \frac{\sigma_l^2}{\|A \|_F^2}.
$$
The discriminant $D:= (r - \beta)^2 - 4 \zeta$ has two zeros
$$
\beta_0 := 1 - \frac{\eta_l}{(1-\sqrt{M})^2}, \quad \text{and} \quad \beta_1 := 1 - \frac{\eta_l}{(1+\sqrt{M})^2}.
$$
The discriminant $D$ is nonnegative when $\beta \in (-\infty,\beta_0]$, negative when $\beta \in (\beta_0,\beta_1)$, and nonnegative when $\beta \in [\beta_1,\infty)$. 

\subsection*{Case 1: $M \in [0,(1-\sqrt{\eta_l})^2]$} In this case, $\beta_0\geq 0$ and we claim
$$
\beta_0 = \argmin_{\beta \in [0,1]} |\lambda_1(\beta)|.
$$
We prove this claim by considering three subcases.
\subsubsection*{Case 1a: $\beta \in [0,\beta_0]$} In this case, the discriminant $D$ is nonnegative so
$$
|\lambda_1(\beta)| = \frac{1}{2} \left( \beta + r + \sqrt{(\beta+r)^2 - 4(r \beta + \zeta)} \right) = \frac{1}{2} \left( \beta + r + \sqrt{D} \right).
$$
We claim that $\partial_\beta |\lambda_1(\beta)| \le 0$. Indeed,
$$
\partial_\beta |\lambda_1(\beta)| = \frac{1}{2}\left( 1 - M + \frac{\partial_\beta D}{2\sqrt{D}} \right),
$$
where
$$
\frac{\partial_\beta D}{2}  = -(1-\beta)(1-M)^2 + \eta_l (1+M).
$$
The assumption that $\beta \in [0,\beta_0]$ and $M \in [0,(1-\sqrt{\eta_l})^2]$ implies that $\partial_\beta D/2 \le 0$.
Thus, to show that $\partial_\beta |\lambda_1(\beta)| \le 0$ it suffices to show that
$$
(1 - M)^2   \le \left( \frac{\partial_\beta D}{2 \sqrt{D}} \right)^2.
$$
That is, it suffices to show that
$$
(1 - M)^2 D   - \left( \frac{\partial_\beta D}{2} \right)^2 \le 0.
$$
Using the definition of $D$ and expanding terms gives
$$
(1 - M)^2 D   - \left( \frac{\partial_\beta D}{2} \right)^2 = - 4 \eta_l^2 M \le 0,
$$
so we conclude that $\partial_\beta |\lambda_1(\beta)| \le 0$, which implies that $|\lambda_1(\beta)| \ge |\lambda_1(\beta_0)|$ for $\beta \in [0,\beta_0]$.

\subsubsection*{Case 1b: $\beta \in (\beta_0,\beta_1)$}
In this case, the discriminant is negative so
$$
|\lambda_1(\beta)|^2 = \frac{(r+\beta) + 4\zeta - (r-\beta)^2}{4} = 
(1- \eta_l + M(1-\beta)) \beta + M(1-\beta)^2.
$$
Observe that the derivative of $|\lambda_1(\beta)|^2$ with respect to $\beta_0$ is positive
$$
\partial_\beta |\lambda_1(\beta)|^2 = 1 - \eta_l - M \ge 1 - \eta_l - (1-\sqrt{\eta_l})^2 = 2(\sqrt{\eta_l} - \eta_l) > 0,
$$
which implies $|\lambda_1(\beta_0)| < |\lambda_1(\beta)|$ for all $\beta \in (\beta_0,\beta_1)$.

\subsubsection*{Case 1c: $\beta \in [\beta_1,1)$}
In this case, $r \le \beta$, so we can estimate
$$
|\lambda_1(\beta)| \ge \frac{r+\beta}{2} \ge r \ge 1- \eta_l \ge |\lambda(\beta_0)|,
$$
which completes the proof of Case 1.

\subsection*{Case 2: $M \in ((1-\sqrt{\eta_l})^2,1)$}
In this case, we claim
\[
\argmin_{\beta \in [0,1]}{|\lambda_1(\beta)|} =
\left\{\begin{array}{cl}
     0 & \text{if } M \in [(1-\sqrt{\eta_l})^2,1-\eta_l] \\
     \beta_1 & \text{if } M \in (1-
     \eta_l,1] 
\end{array}\right..
\]

We prove the claim by considering two subcases.
\subsubsection*{Case 2a : $\beta \in [0,\beta_1)$ } Since the assumption on $M$ implies $[0,\beta_1) \subset (\beta_0,\beta_1)$ the discriminant is negative and 
$$
|\lambda_1(\beta)|^2 = \frac{(\beta+r)^2 + 4( r \beta + \zeta) - (\beta+r)^2}{4} = (1- \eta_l + M(1-\beta)) \beta + M(1-\beta)^2.
$$
Taking the derivative with respect to $\beta$ gives
$$
\partial_\beta |\lambda_1(\beta)|^2 = 1 - \eta_l - M.
$$
We conclude that the minimum occurs at $\beta = 0$ when $M \in [(1-\sqrt{\eta_l})^2,1-\eta]$  or at $\beta_1$ when $M \in [1-\eta_l,1]$.

\subsubsection*{Case 2b: $\beta \in [\beta_1,1)$}  Since $\beta \in [\beta_1,1)$ the discriminant is nonnegative, the eigenvalues are real, and
$$
\partial_\beta |\lambda_1(\beta)| = \frac{1}{2} \left( 1 - M + \frac{\partial_\beta D}{2\sqrt{D}} \right)
$$
as in Case 1a. Observe that, by our assumption on $\beta$, 
$$
\partial_\beta D/2  =  -(1-\beta)(1-M)^2 + \eta_l (1+M) > \left(\frac{\eta_l}{(1+\sqrt{M})^2}\right)(1-M)^2 + \eta_l (1+M)
$$
\[
= \eta_l\left[\frac{1-M^2}{(1+\sqrt{M})^2}+(1+M) \right].
\]
Since $M\in ((1-\sqrt{\eta_l})^2,1)$, all terms above are positive, which shows $\partial_\beta D/2 > 0$. Furthermore, $\partial_\beta|\lambda_1(\beta)|>0$ implying the minimum occurs at $\beta_1$. This means that for $M\in [(1-\sqrt{\eta})^2,1]$  the minimum reduces to Case 2a by continuity of $|\lambda_1(\beta)|$, completing our proof.
\end{proof}

\begin{proof}[Proof of Corollary \ref{coropt}] \label{corspectralproof}
Let $B$ denote the $2 \times 2$  matrix
$$
B = \begin{bmatrix}
r & \zeta \\
-1 & \beta \\
\end{bmatrix}
$$
from \eqref{maineq} in Theorem \ref{thm1} and recall the eigenvalues of $B$ are
$$
\lambda_1 := \frac{r + \beta + \sqrt{(r-\beta)^2 - 4 \zeta}}{2}
\quad \text{and} \quad
\lambda_2 := \frac{r + \beta - \sqrt{(r-\beta)^2 - 4 \zeta}}{2}.
$$
Set
$$
\beta = 1 - \frac{\eta_l}{(1-\sqrt{M})^2}.
$$
In this case
$$
B = \begin{bmatrix}
1 - \eta_l \frac{1 - 2 \sqrt{M}}{(1-\sqrt{M})^2} & \eta_l^2\frac{M }{(1-\sqrt{M})^4} \\[5pt]
-1 & \eta_l\frac{M }{(1-\sqrt{M})^2}
\end{bmatrix}
$$
has the eigenvalue
$$
\lambda =  1 - \frac{\eta_l}{1 - \sqrt{M}}
$$
of algebraic multiplicity two. It is straightforward to verify that the Jordan decomposition of this matrix is
\begin{equation*} 
B =
\begin{bmatrix}
\frac{\eta_l^2 M}{(1-\sqrt{M})^4} & 0 \\[5pt] 
-\frac{\eta_l \sqrt{M}}{(1-\sqrt{M})^2} & 1 
\end{bmatrix}
\begin{bmatrix}
1 - \frac{\eta_l}{1-\sqrt{M}} & 1 \\[5pt]
0 & 1 - \frac{\eta_l}{1 - \sqrt{M}}
\end{bmatrix}
\begin{bmatrix}
\frac{(1-\sqrt{M})^4}{\eta_l^2 M} & 0 \\[5pt]
\frac{(1-\sqrt{M})^2}{\eta_l \sqrt{M}} & 1
\end{bmatrix}.
\end{equation*}
It follows that
\begin{equation*} 
B^k =
\begin{bmatrix}
\frac{\eta_l^2 M}{(1-\sqrt{M})^4} & 0 \\[5pt] 
-\frac{\eta_l \sqrt{M}}{(1-\sqrt{M})^2} & 1 
\end{bmatrix}
\begin{bmatrix}
\left(1 - \frac{\eta_l}{1-\sqrt{M}}\right)^k &  k \left(1 - \frac{\eta_l}{1-\sqrt{M}}\right)^{k-1} \\[5pt]
0 & \left(1 - \frac{\eta_l}{1 - \sqrt{M}} \right)^k
\end{bmatrix}
\begin{bmatrix}
\frac{(1-\sqrt{M})^4}{\eta_l^2 M} & 0 \\[5pt]
\frac{(1-\sqrt{M})^2}{\eta_l \sqrt{M}} & 1
\end{bmatrix}.
\end{equation*}
By Theorem \ref{thm1} we have
\begin{multline*}
 \mathbb{E} \langle x_{k+1} - x, v_l \rangle =  
    \begin{bmatrix}
        1 - \eta_l + M \frac{\eta_l}{(1-\sqrt{M})^2} \\[5pt]
        M \frac{\eta_l^2}{(1-\sqrt{M})^4}
    \end{bmatrix}^\top
 \begin{bmatrix}
\frac{\eta_l^2 M}{(1-\sqrt{M})^4} & 0 \\[5pt] 
-\frac{\eta_l \sqrt{M}}{(1-\sqrt{M})^2} & 1 
\end{bmatrix}   \\
 \begin{bmatrix}
\left(1 - \frac{\eta_l}{1-\sqrt{M}}\right)^k &  k \left(1 - \frac{\eta_l}{1-\sqrt{M}}\right)^{k-1} \\[5pt]
0 & \left(1 - \frac{\eta_l}{1 - \sqrt{M}} \right)^k
\end{bmatrix}  
 \begin{bmatrix}
\frac{(1-\sqrt{M})^4}{\eta_l^2 M} & 0 \\[5pt]
\frac{(1-\sqrt{M})^2}{\eta_l \sqrt{M}} & 1
\end{bmatrix}  
    \begin{bmatrix}
   1 \\[5pt] 
   \frac{-(1-\sqrt{M})^2}{\eta_l}
\end{bmatrix}
 \langle x_0 -x, v_l \rangle;
 \end{multline*}
performing matrix multiplication gives
\begin{equation*}
 \mathbb{E} \langle x_{k+1} - x, v_l \rangle =   
\left(1 - \frac{\eta_l}{1-\sqrt{M}} \right)^k \left( 1 +   \frac{\eta_l\left((\sqrt{M}(k+1))-1\right)}{1- \sqrt{M}}  \right)
 \langle x_0 -x, v_l \rangle,
\end{equation*}
which completes the proof.
\end{proof}

\begin{proof}[Proof of Corollary \ref{corarg}] \label{proofofcorarg}
In addition to the assumptions in Theorem \ref{thm1}, suppose that the eigenvalue $\lambda_1$ defined in  \eqref{eqeigenvalues} is complex with a non-zero imaginary part; that is,
$\lambda_1 = \rho e^{i \theta}$ for $\rho>0$ and $0<\theta < \pi$. Applying Theorem \ref{thm1} we obtain
\[
\mathbb{E} \langle x_{k+1} - x, v_l\rangle = \begin{bmatrix} r \\ \zeta\end{bmatrix}^T Q \Lambda^k Q^{-1} \begin{bmatrix} 1 \\ \frac{-1}{1-\beta}\end{bmatrix}\langle x_0 - x, v_l \rangle ,
\]
where $Q \Lambda Q^{-1}$ is the eigenvalue decomposition of $B=\left[\begin{smallmatrix} r & \zeta \\ -1 & \beta \end{smallmatrix}\right]$ and $\zeta,\beta,r$ are defined as in the proof of Theorem \ref{thm1}. Since $B$ is a real matrix and $\lambda_1$ is complex we know that $\lambda_2$, the second eigenvalue of $B$, is conjugate to $\lambda_1$. Hence
\[
\mathbb{E} \langle x_{k+1} - x, v_l\rangle = \begin{bmatrix} r \\ \zeta\end{bmatrix}^T Q \begin{bmatrix}
\rho^k e^{ik \theta} & 0 \\ 0 & \rho^k e^{-ik\theta}    
\end{bmatrix}  
Q^{-1} \begin{bmatrix} 1 \\ \frac{-1}{1-\beta}\end{bmatrix}\langle x_0 - x, v_l \rangle .
\]
Defining $\begin{bmatrix}
    c_1 \\ c_2
\end{bmatrix} := Q^{-1}\begin{bmatrix} 1 \\ \frac{-1}{1-\beta}\end{bmatrix}\langle x_0 - x, v_l \rangle$ we see that
\[
\mathbb{E} \langle x_{k+1} - x, v_l\rangle = \rho^k \begin{bmatrix} r \\ \zeta\end{bmatrix}^T Q \left(\begin{bmatrix} c_1 \\ 0\end{bmatrix} e^{ik \theta}+ \begin{bmatrix} 0 \\ c_2\end{bmatrix} e^{-i k\theta}\right) = \rho^k(C_1 e^{ik \theta} + C_2 e^{-ik \theta}) ,
\]
where $C_1 = \begin{bmatrix} r \\ \zeta\end{bmatrix}^T Q \begin{bmatrix} c_1 \\ 0\end{bmatrix}$ and $C_2 = \begin{bmatrix} r \\ \zeta\end{bmatrix}^T Q \begin{bmatrix} 0 \\ c_2 \end{bmatrix}.$ Since $\mathbb{E} \langle x_{k+1} - x, v_l\rangle$ is a real number for all $k$, we see that
\[
\text{Re}(C_1)\sin(k\theta) + \text{Im}(C_1)\cos(k\theta) - \text{Re}(C_2)\sin(k\theta) + \text{Im}(C_2)\cos(k\theta) = 0, 
\]
for all $k.$ Since $0<\theta<\pi$, it follows that $\text{Re}(C_1)=\text{Re}(C_2)$ and $\text{Im}(C_1) = -\text{Im}(C_2).$ Thus,  
\[
\mathbb{E} \langle x_{k+1} - x, v_l\rangle = \rho^k(C_1e^{i k \theta} + \overline{C_1 e^{i k \theta}}).
\]
Writing $C_1= \frac{C}{2}e^{i \theta_0}$ for some $C>0$ and $\theta_0\in [0,2\pi)$, we obtain
\[
\mathbb{E} \langle x_{k+1} - x, v_l\rangle = \rho^k C \cos(k\theta + \theta_0).
\]
This completes the proof.

\end{proof}

\section{Discussion} \label{discussion}
\subsection{Summary}
This paper introduced the randomized Kaczmarz with geometrically smoothed momentum (KGSM) algorithm. The method depends on two hyperparameters
\begin{itemize}
\item the momentum parameter $M$, and
\item the geometric smoothing parameter $\beta$.
\end{itemize}
We analyzed the behavior of this method across different distributions of singular vectors and different values of $M$ and $\beta$. Our main result, Theorem \ref{thm1}, provides a formula for the expected signed error $\mathbb{E} \langle x - x_k, v_l \rangle$ of KGSM in the direction of the singular vectors $v_l$ of the matrix $A$ that defines the least squares loss. In Corollary \ref{corstefan}, we observed how our result extends Theorem 1.1 of Steinerberger \cite{Steinerberger2021}, while Corollaries
\ref{cormin} and \ref{coropt} optimize the smoothing parameter $\beta$ for
a fixed momentum parameter $M$. Our theoretical results are illustrated by several numerical examples in \S \ref{numerics}. In addition, we provided examples that show the limitations of our analysis and pose questions regarding the dynamics of KGSM. We detail some of these questions below and discuss potential avenues to extend our analysis.

\subsection{Limitations and questions}
The main limitation of our theoretical analysis is that it does not provide insight into the value of the critical momentum parameter $M$ for a given linear system. We discuss this issue and related questions in the following.

\subsubsection{Critical momentum} 
Our numerical results indicate that there is a critical momentum along the curve illustrated in Figure \ref{fig04} where the algorithm's behavior breaks down. More generally, there is a region of the $(M,\beta)$ parameter space where the method fails to converge. What is the critical $M$ or region in $(M,\beta)$ parameter space where this failure occurs? 

\subsubsection{Estimating $M$ and $\beta$ from data} 
We did not investigate methods for estimating effective parameters $M$ and $\beta$ from data. Even if formulas for $M$ and $\beta$ are known, these would need to be estimated from data to implement the method for practical problems. Is it possible to estimate suitable parameters as the algorithm runs adaptively? A related question involves estimating the locations of the periodic spikes in the error
discussed in \S \ref{periodicspikingbehavior}. Is it possible to estimate the locations of spikes in the error in an adaptive way?

\subsubsection{Nesterov acceleration} Another method to accelerate the convergence of randomized Kaczmarz is Nesterov acceleration \cite{liu2015}. Could our analysis be extended to analyze Kaczmarz methods that use Nesterov acceleration? It may be possible to adapt our analysis to the setting of Liu and Wright \cite{liu2015} or to study convergence along singular vectors for other accelerated gradient methods.

\subsubsection{Direction of convergence and convergence in $\ell^2$-norm}
We did not establish an analog of \cite[Theorem 1.3]{Steinerberger2021}, which would help to explain any directional change in KGSM. Can this result be generalized to the setting with geometrically smoothed momentum? In particular, can the change in direction be controlled as a function of $M$?
Another related question is establishing convergence for the expected absolute error in the direction of a singular vector  $|\langle x-x_k,v_l \rangle|$ or for the $\ell^2$-norm $\|x-x_k\|_2$. By Jensen's inequality, we have $|\mathbb{E} \langle x - x_k, v_l \rangle| \le \mathbb{E} |\langle x - x_k, v_l \rangle|$, so our results provide a lower bound that our numerical results indicate is sharp in some cases.  Thus, at least for some special cases, proving a result about the absolute error may be possible.

\subsubsection{Block methods} Our analysis considered the case of block size one (or equivalent minibatch size). Can the analysis be extended to Kaczmarz methods with variable block or minibatch size? One potential approach would be to combine our analysis with that of Bollapragada, Chen, and Ward \cite{bollapragada2023fast} or to directly approach the problem using the methodology of other block methods.

\section*{Acknowledgments}
 We thank the anonymous reviewers whose suggestions significantly improved the exposition of the results. NFM was supported in part by a start-up grant from Oregon State University.

\begin{appendix}

\section{Futher Examples} \label{furtherexamplesappendix}

\subsection{Elementary example: Gaussian system} \label{gaussiansystem}
In this section, we present an example of KGSM on a linear system defined by a $60 \times 50$ matrix with i.i.d. standard Gaussian entries. This example is straightforward to implement in any programming environment that supports random number generation and basic linear algebra operations; detailed pseudocode is included in Algorithm \ref{figgauss}.

\begin{algorithm}[h!]
\caption{Elementary example: KGSM vs. Kaczmarz on a Gaussian system}
\label{figgauss}
\begin{tabular}{cc}
\begin{minipage}{0.5\textwidth}
\begin{algorithmic}[1] 
\State $m \gets 60, \quad n \gets 50, \quad N \gets 15000$
\State $M \gets 0.8, \quad \beta \gets 0.98$ 
\State $A \gets \text{random\_normal}(m,n)$
\State $x \gets  \text{random\_normal}(n,1)$
\State $b \gets A x$ 
\State $x_0, y_0, z_0 \gets \text{zeros}(n,1)$
\State $u, v \gets \text{zeros}(N,1)$
\For{$k =1,\ldots,N$}
    \State $i \gets \text{unif\_random}(\{1,\ldots,m\})$ 
    \State $a_i \gets (\text{$i$-th row $A$})^\top$ (dim $n \times 1$)
    \State $x_1 = x_0 + \frac{b_i -  a_i^\top x_0}{\|a_i\|_2^2} a_i + M y_0$
    \State $y_1 = \beta y_0 + (1-\beta) (x_1 - x_0)$ 
    \State $z_1 = z_0 + \frac{b_i - a_i^\top z_0}{\|a_i\|_2^2} a_i$ 
    \State $x_0 \gets x_1, \quad y_0 \gets y_1, \quad z_0 \gets z_1$ 
    \State $u_k \gets \|z_0-x\|_2$, $v_k \gets \|x_0-x\|_2$  
\EndFor
\State \textbf{plot} $\log_{10}(u)$ blue, $\log_{10}(v)$ red
{\State  \textbf{legend}(`Kaczmarz',`KGSM')}
\end{algorithmic} 
\end{minipage} &
\raisebox{-.5\height}{\includegraphics[width=.43\textwidth,trim=50 0 0 0]{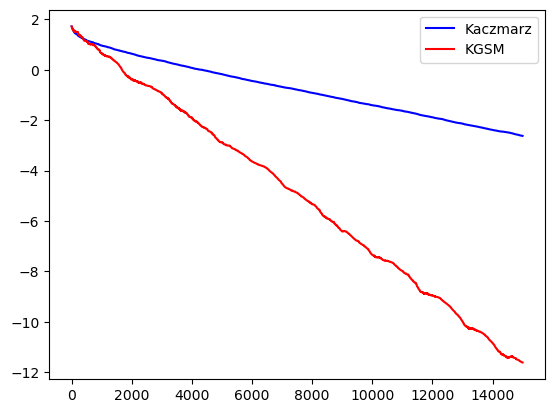}}
\end{tabular}
\end{algorithm} 

Note that the pseudocode in Algorithm \ref{figgauss} chooses rows uniformly at random instead of proportional to their squared $2$-norm; this was done intentionally for simplicity: since the squared $\ell^2$-norm of the rows concentrates to $n$, choosing rows uniformly at random does not make a large practical difference. Further, we note that the ratio $m/n$ was specifically chosen so that the distribution of singular values includes a range of big and small values. Tall Gaussian matrices are well-conditioned and thus will not provide an interesting example for KGSM; indeed, recall that if the width $n$ of a random Gaussian matrix is fixed, then the condition $\sigma_1(A)/\sigma_n(A) \to 1$ as $m  \rightarrow \infty$. 

\subsection{Batch size $1$ Heavy Ball Momentum} \label{heavyballmomentumbatchsize1fails}
In this section, we provide two numerical examples illustrating the ineffectiveness of batch size $1$ Heavy Ball Momentum \eqref{beta0}. In particular, we compare Kaczmarz, KGSM, and batch size $1$ Heavy Ball Momentum for the system with one small singular value described in \S \ref{basicex} and the system whose singular values decay linearly described in \S \ref{exlinear}, see Figure \ref{fig_heavyball}.

\begin{figure}[h!]
    \centering
    \includegraphics[width=.4\textwidth]{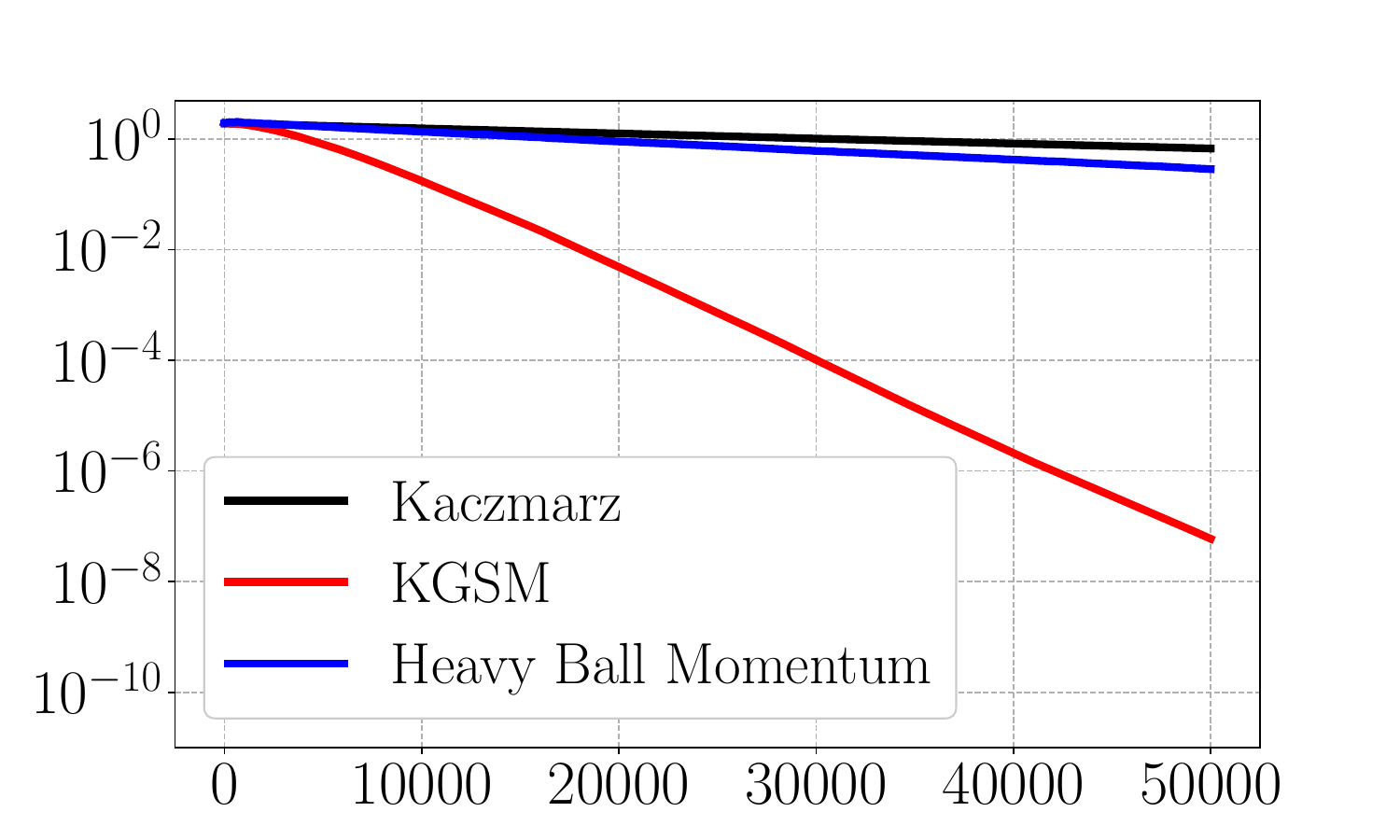}
    \includegraphics[width=.4\textwidth]{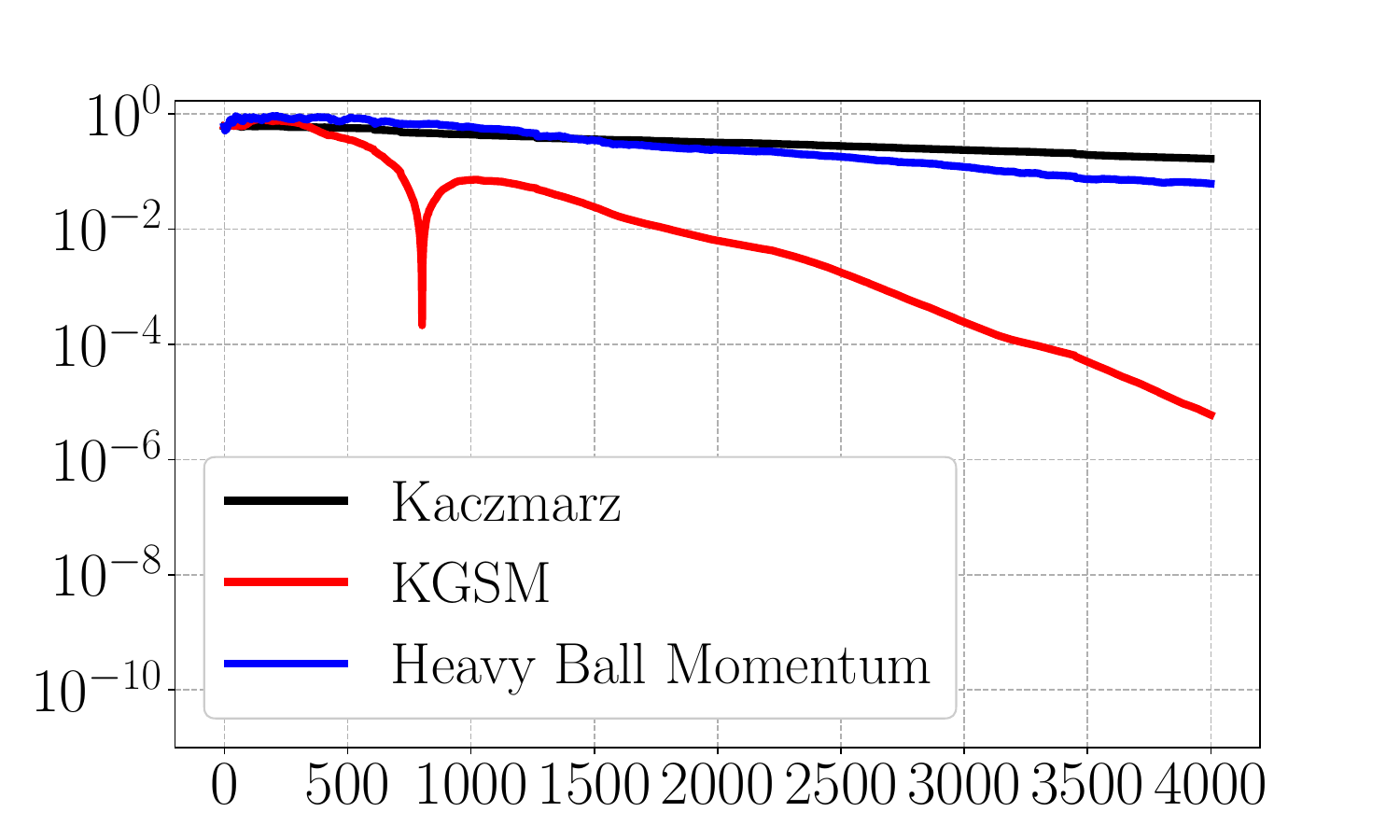}
    \caption{ The error $\|x-x_k\|_2$ for Kaczmarz, KGSM, and batch size $1$ Heavy Ball Momentum \eqref{beta0} with $M=0.5$ for the linear systems described in \S \ref{basicex} (left) and \S \ref{exlinear} (right). }
    \label{fig_heavyball}
\end{figure}

While batch size $1$ Heavy Ball Momentum is sometimes slightly better than randomized Kaczmarz, we were unable to identify any situation or parameter settings where it was better in any significant way.

\subsection{Additional spectral decays} \label{additionalspectraldecays} In this section, 
we consider systems whose singular values decay based on convex and concave functions. In particular, using the procedure described in \S \ref{numericsprelim} we construct $100\times 20$ matrices $A_{\mu}$ and $A_{\sigma}$ with singular values 
\begin{equation} \label{singularvalues}
\mu_i = \left(1-c_1 \left(\frac{i-1}{20}\right)^6\right) \quad \text{and} \quad \sigma_i = \left(1-c_2 \left(\frac{i-1}{20}\right)\right)^6,
\end{equation}
for $i=1,\ldots,20$, where $c_1$ and $c_2$ are constants chosen so that $\sigma_{20}=\mu_{20}=1/50$, see Figure \ref{fig_mu_sigma1}.

\begin{figure}[h]
    \centering
    \begin{tabular}{cc}
    \includegraphics[width = 0.4\textwidth,trim=30 0 0 0]{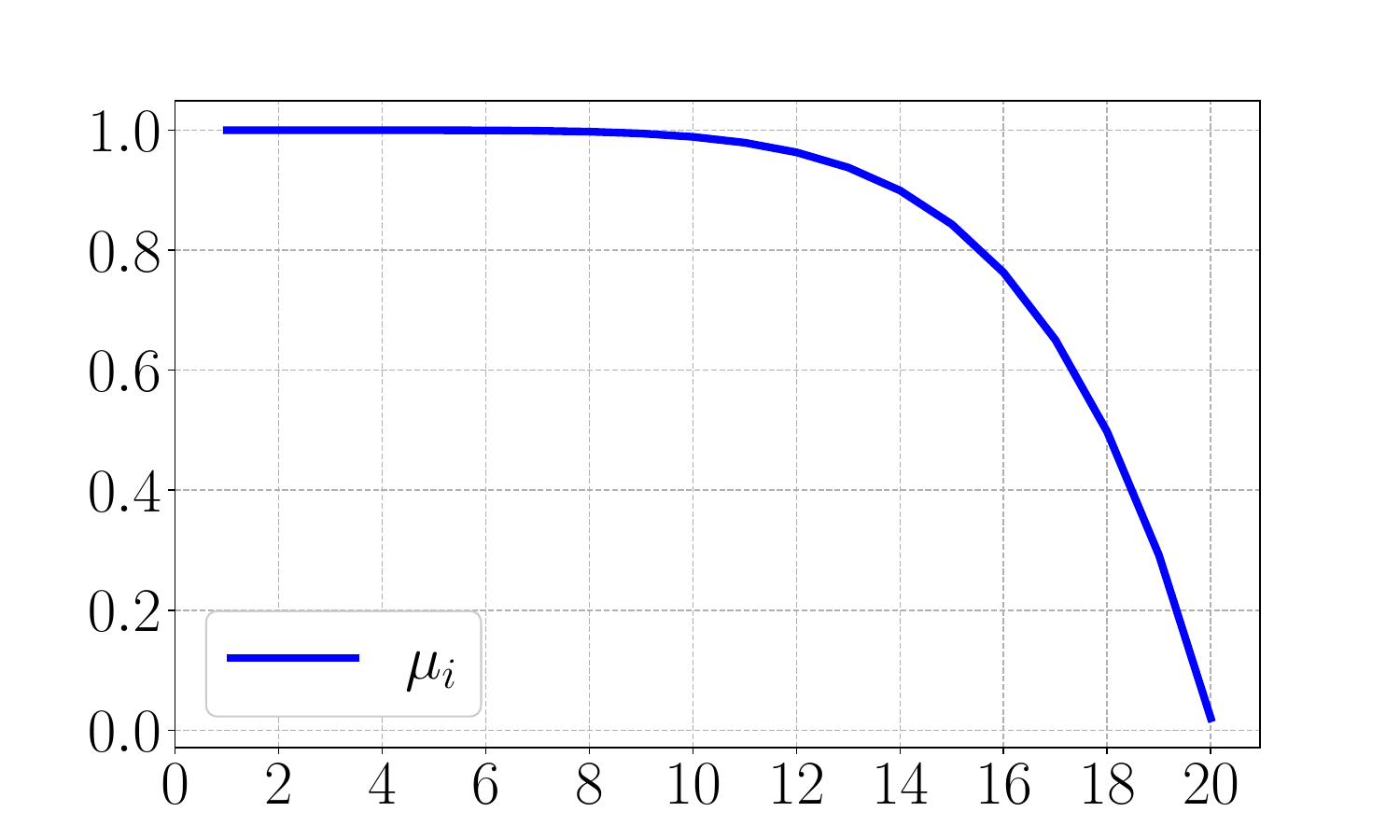}  &
    \includegraphics[width = 0.4\textwidth,trim=30 0 0 0]{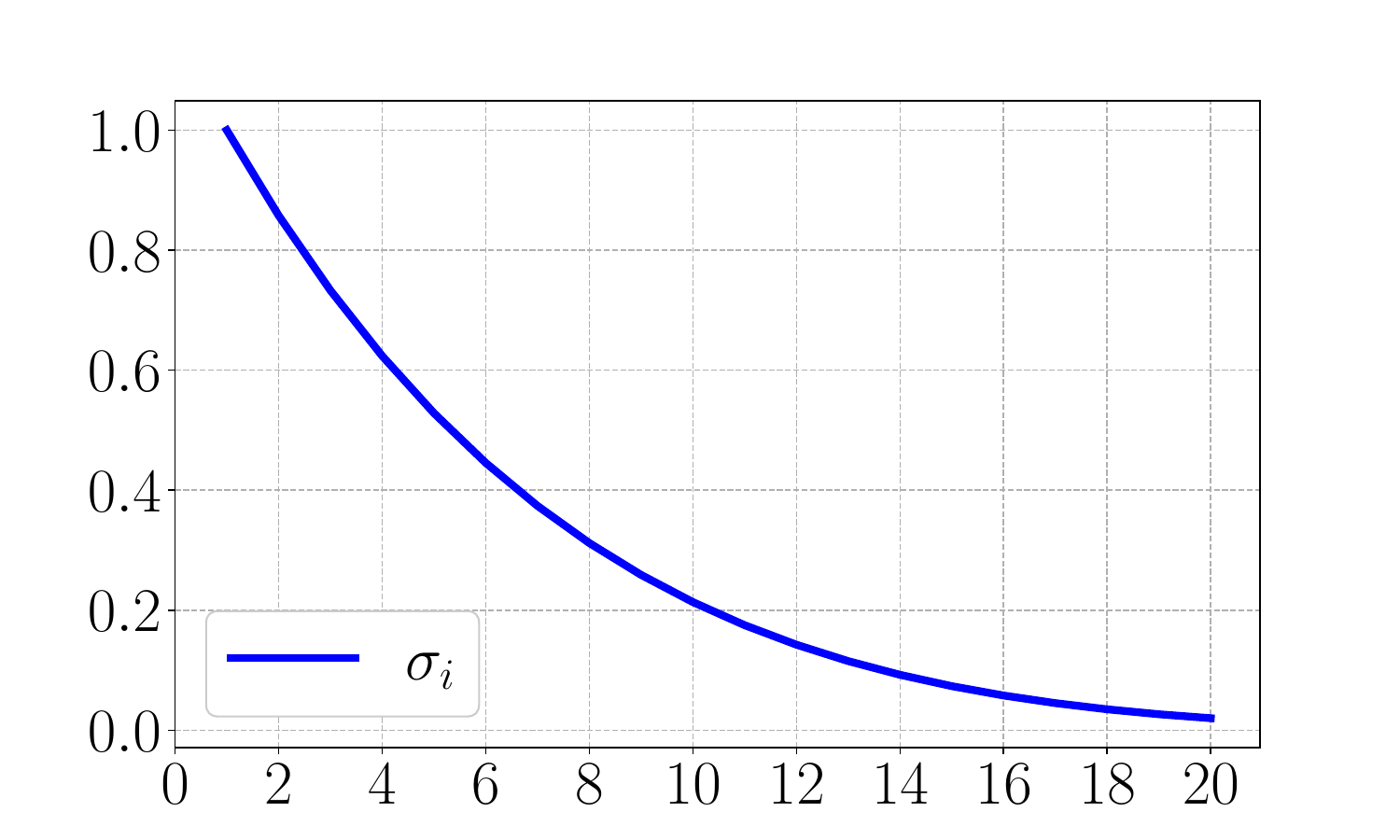}
    \end{tabular}
    \caption{ The singular values $\mu_i$ (left) and $\sigma_i$ (right)
    defined in \S \ref{singularvalues}.
    }
    \label{fig_mu_sigma1}
\end{figure}

We define $M_\mu = 0.95$, $\beta_{\mu} = 1 - \frac{\mu_{20}^2/\|A_{\mu}\|_{F}^2}{(1 - \sqrt{M})^2}$, $M_{\sigma} = 0.91$, and $\beta_{\sigma} = 1 - \frac{\sigma_{20}^2/\|A_{\sigma}\|_{F}^2}{(1 - \sqrt{M})^2}$
to be the momentum and smoothing parameters for each system, respectively; see Remark \ref{settingM} for a discussion about setting the momentum parameter $M$. We choose a random initial vector $x_0$ for each system (with independent standard normal entries), run randomized Kaczmarz \eqref{kaczmarz} and KGSM \eqref{eq:our-method}, and compute the $\ell_2$-errors, see Figure \ref{fig_mu_sigma}. 

\begin{figure}[h!]
    \centering
    \begin{tabular}{cc}
    \includegraphics[width = 0.4\textwidth,trim=30 0 0 0]{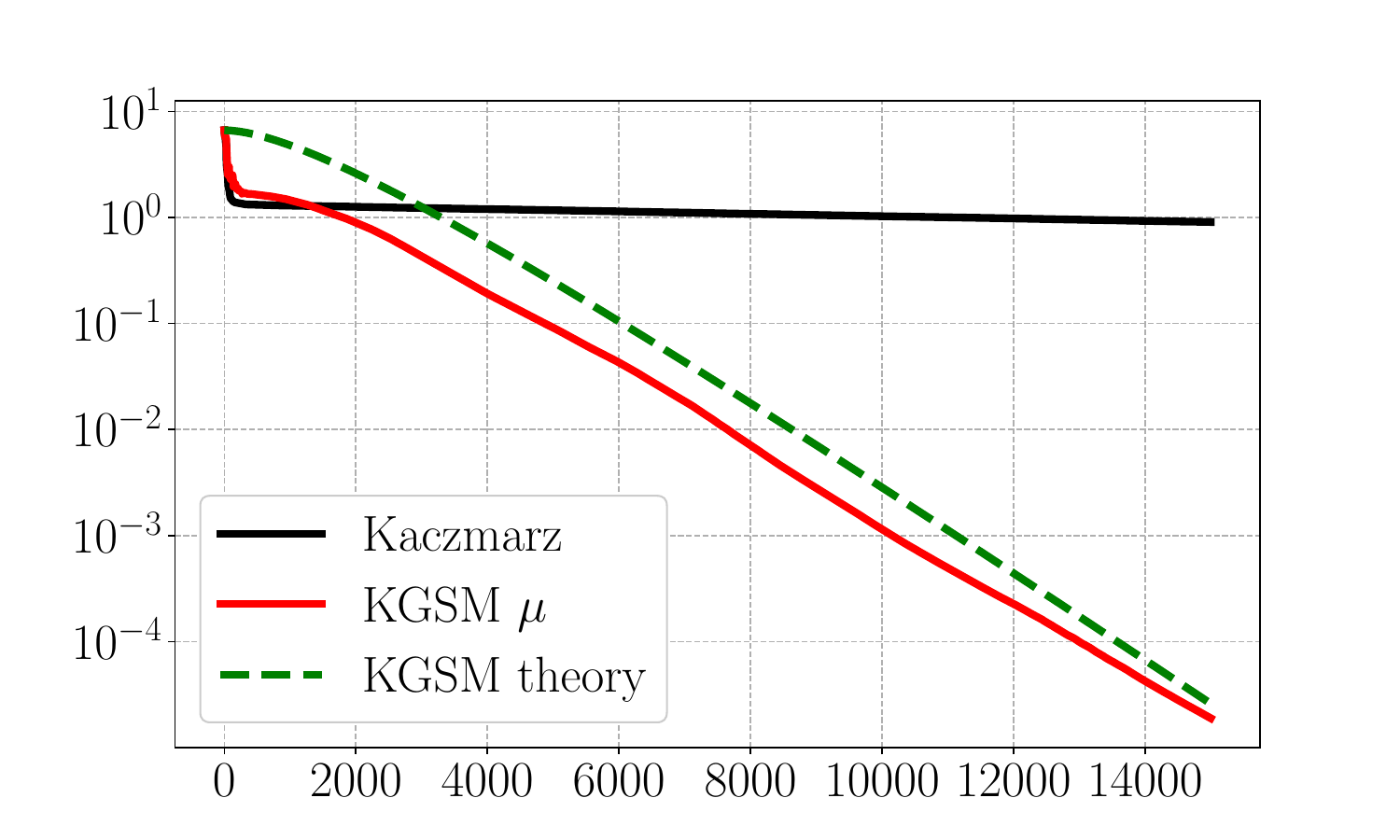} &
    \includegraphics[width = 0.4\textwidth,trim=30 0 0 0]{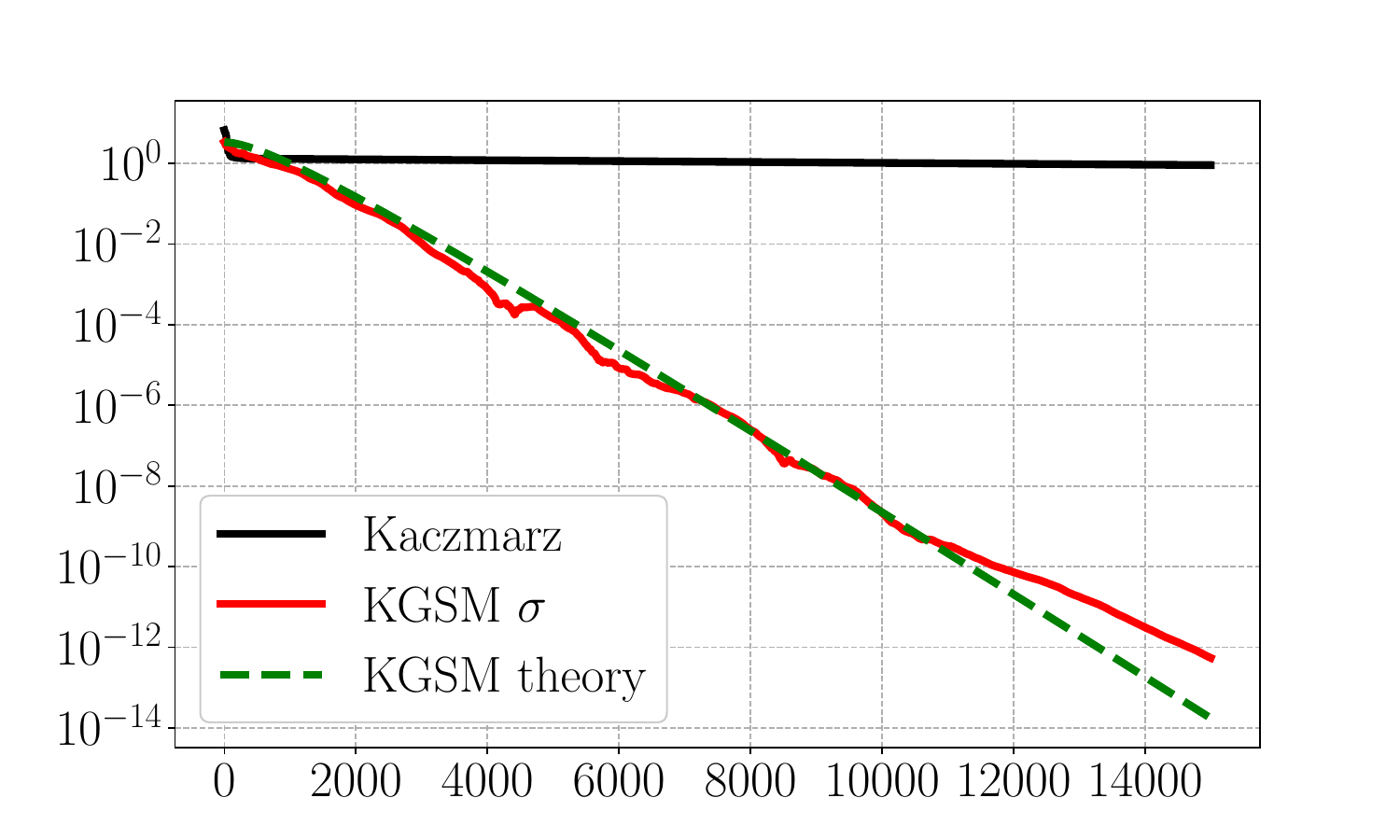} \\
    \end{tabular}
    \caption{The error $\|x_k - x\|_2$ for randomized Kaczmarz \eqref{kaczmarz} and KGSM { \eqref{eq:our-method}} for the system defined by $A_{\mu}$ (left) and $A_{\sigma}$ (right).
    For reference, we plot the value of $|\mathbb{E} \langle x-x_k, v_{20} \rangle |$ for KGSM computed via Theorem \ref{thm1}.}
    \label{fig_mu_sigma}
\end{figure}

In both cases, the theoretical estimates from \S \ref{sec:mainresult} seem to match the numerical error. This contrasts with some of the behavior seen in the example with many small singular values in \S \ref{manybad}, where the theory largely deviated from the numerical error. Additionally, we see that KGSM converges significantly faster than randomized Kaczmarz in both cases. { Interestingly, this example seems to indicate that KGSM converges faster for convex spectral decays than concave spectral decays. Note that $\sigma_1 = \mu_1$ and $\sigma_n = \mu_n$. Since $\sigma$ is convex and $\mu$ is concave, it follows that $\|A_{\sigma}\|_F \leq \|A_{\mu}\|_F$. Since Theorem \ref{thm1} approximately predicts the behavior of both plots, the difference in convergence rates is explained by the dependence of Theorem \ref{thm1} on the quantity $\eta_l = \sigma_l/\|A\|_F$.} These two numerical examples, in combination with the numerical examples given in \S \ref{numerics} and \S \ref{gaussiansystem}, illustrate the performance benefit of KGSM over randomized Kaczmarz for a variety of different spectral decays. 

\subsection{Further phase plots} \label{additionalbetaplots} In Example \ref{explorembeta}, we explored the 
diverse dynamics of KGSM { \eqref{eq:our-method}} for four values in different regions of the $(M,\beta)$ parameter space (see Figure \ref{fig04}). In this section, we provided additional plots of $|\langle x-x_k, v_{19} \rangle|$ in Figure \ref{fig05_v19}, and $\|x -x_k\|_2$  in Figure \ref{fig05_l2}.

\begin{figure}[h!]
\centering
\begin{tikzpicture}
\node[anchor=north west] at (0,0) {\includegraphics[width=0.355\textwidth,trim=30 0 30 0]{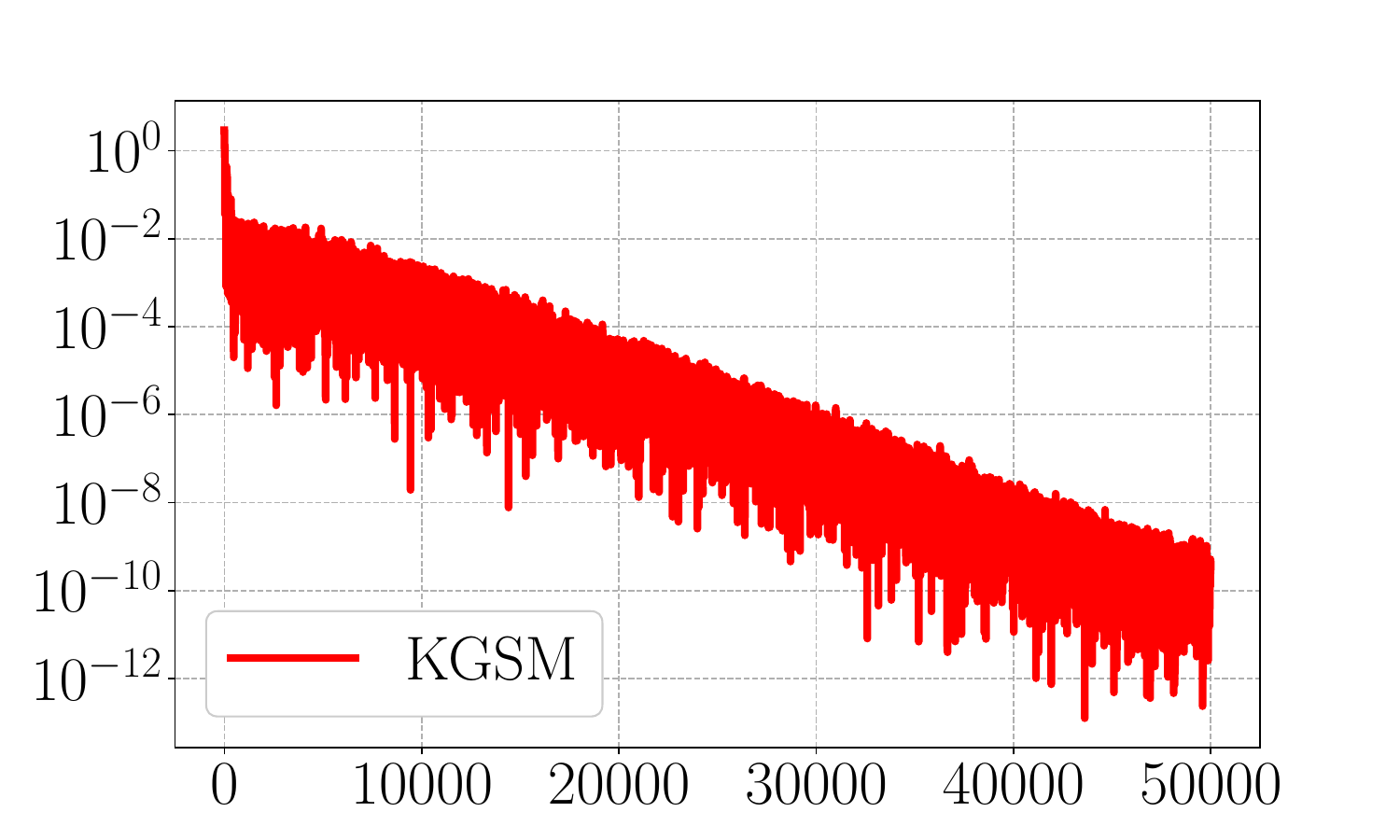}};
\node[anchor=north west] at (0.5\textwidth,0) {\includegraphics[width=0.355\textwidth,trim=30 0 30 0]{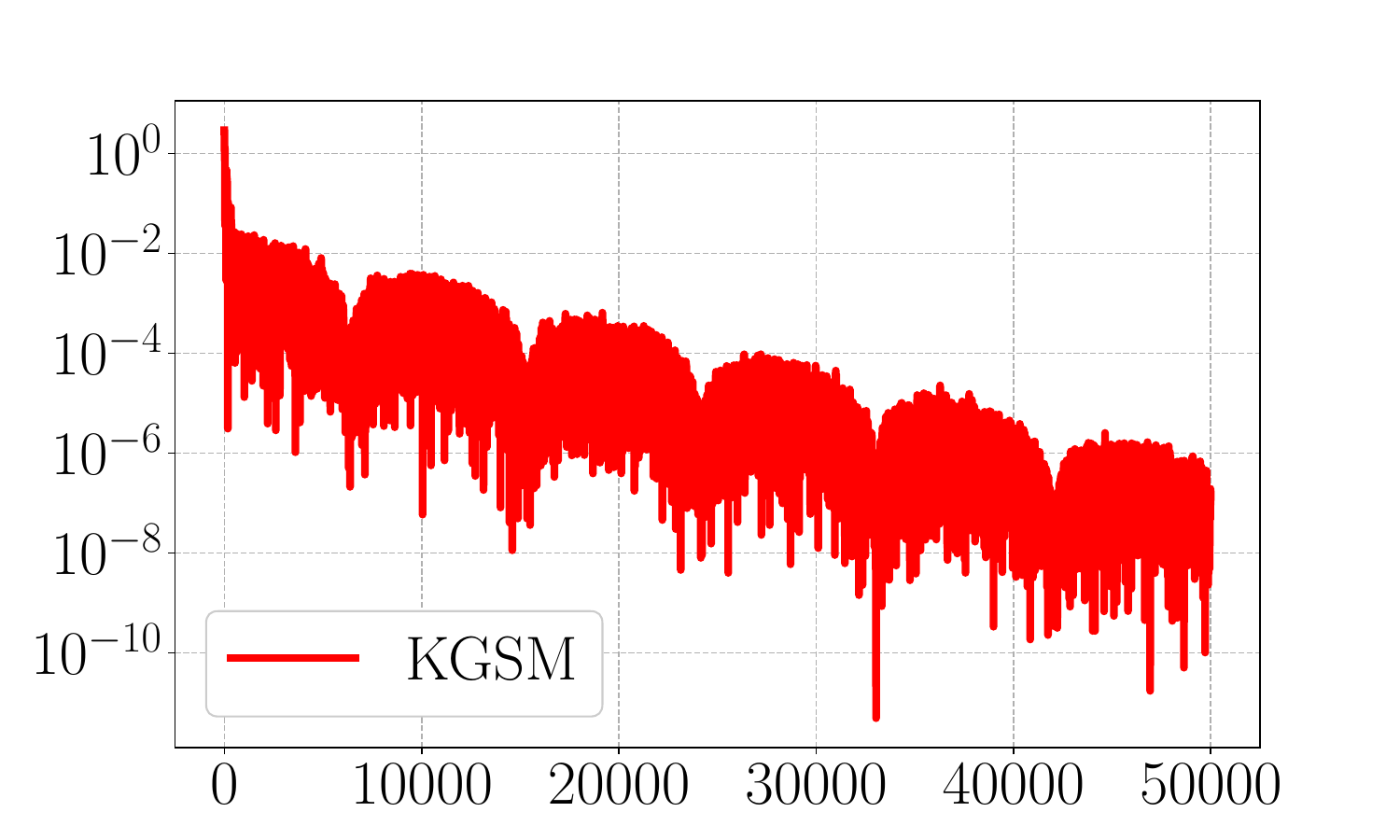}};
\node[anchor=north west] at (0,-0.3\textwidth) {\includegraphics[width=0.355\textwidth,trim=30 0 30 0]{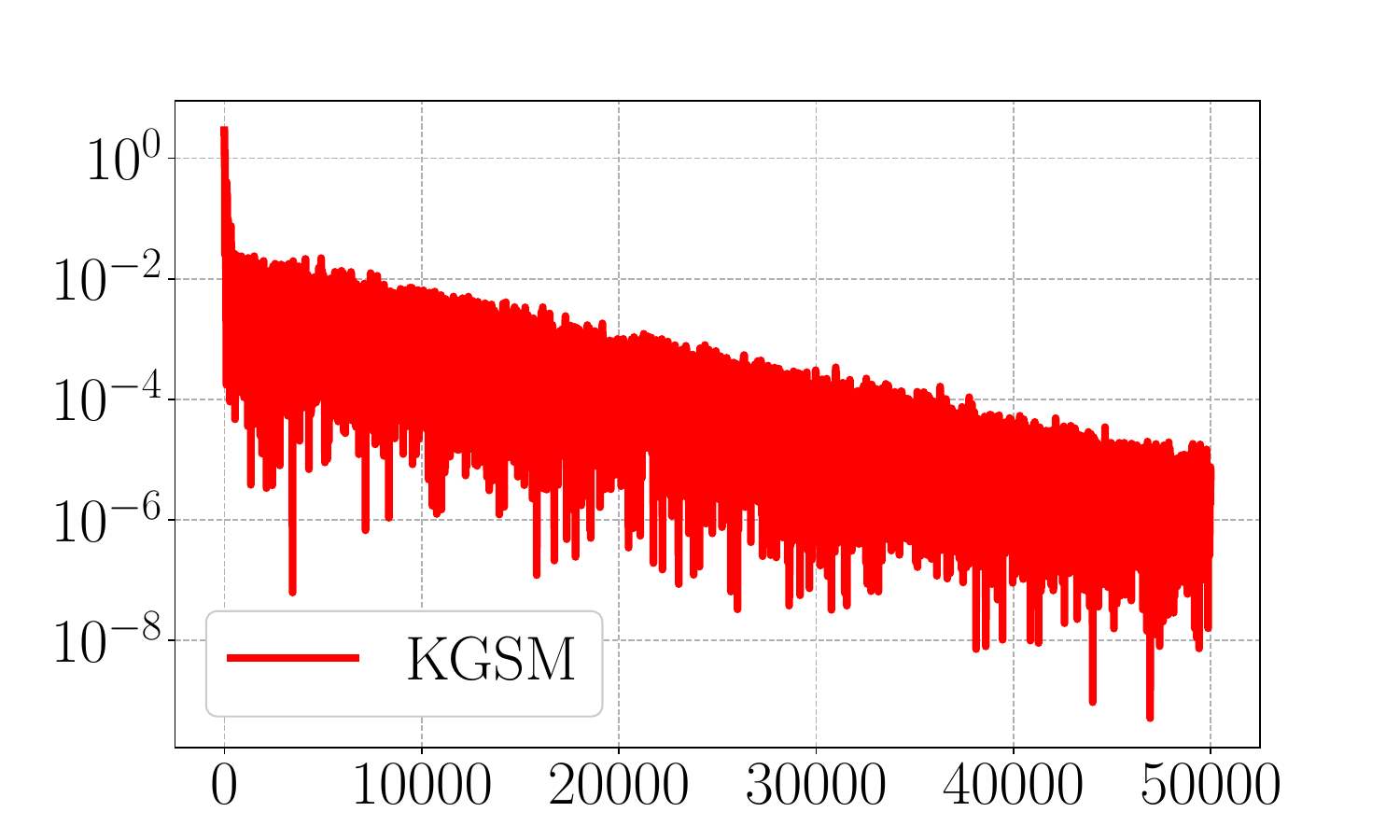}};
\node[anchor=north west] at (0.5\textwidth,-0.3\textwidth) {\includegraphics[width=0.355\textwidth,trim=30 0 30 0]{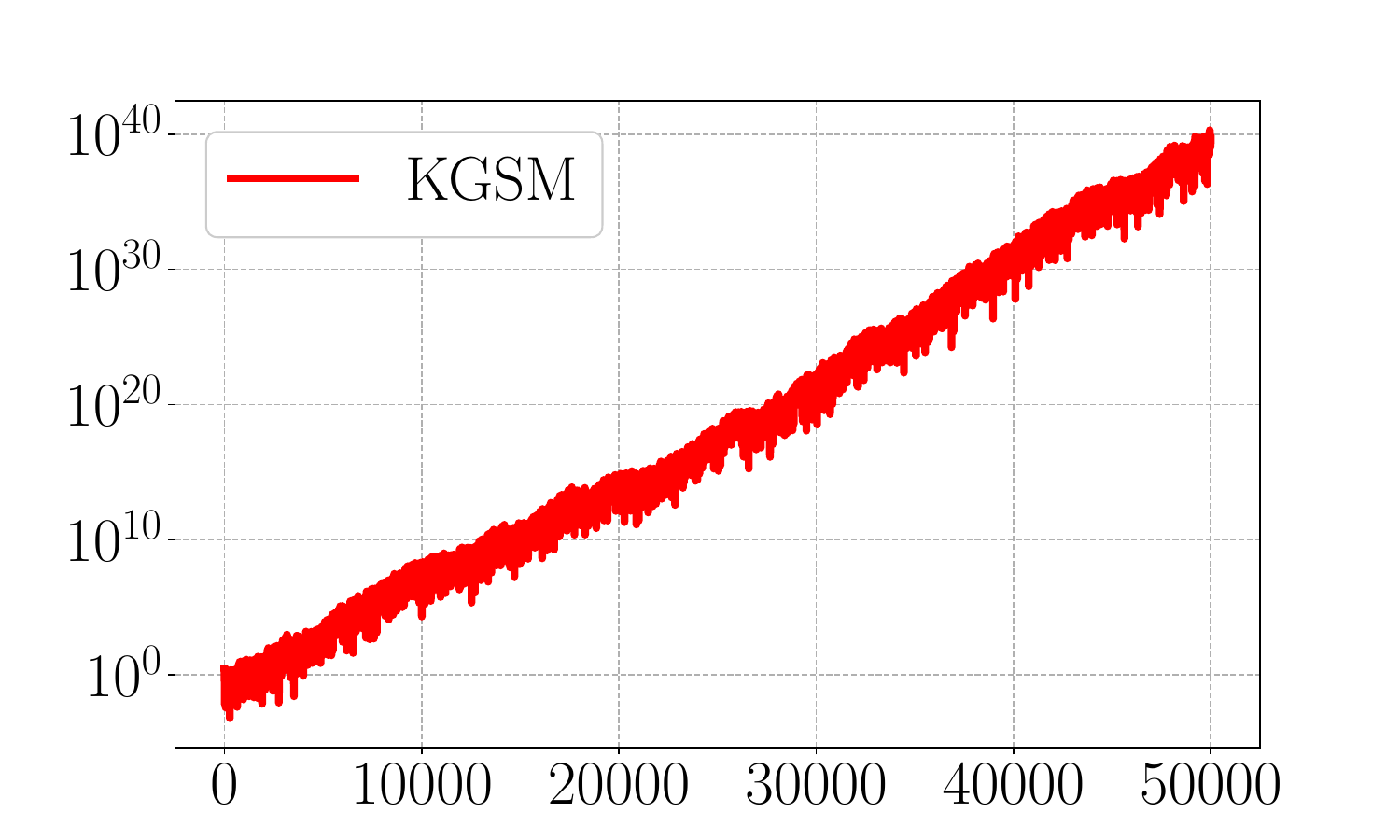}};

\node[draw=black, circle, fill=black, inner sep=0pt, minimum size=3mm] at (-0.01\textwidth, -0.05\textwidth) {};

\definecolor{mygreen}{HTML}{008000}
\begin{scope}[shift={(0.49\textwidth, -0.05\textwidth)}]
    \draw [mygreen, line width=1.3mm] (-0.15,0) -- (0.15,0); 
    \draw [mygreen, line width=1.3mm] (0,-0.15) -- (0,0.15); 
\end{scope}

\definecolor{myred}{HTML}{FF0000}
\node[draw=myred, rectangle, fill=myred, inner sep=0pt, minimum size=3mm] at (-0.01\textwidth, -0.35\textwidth) {};

\definecolor{myblue}{HTML}{0000FF}
\draw[draw=myblue, fill=myblue] (0.48\textwidth, -0.35\textwidth) -- ++(60:3mm) -- ++(-60:3mm) -- cycle;
\end{tikzpicture}
\caption{ The error $|\langle x_k -x,v_{19}\rangle|$  for KGSM \eqref{eq:our-method2} with parameters $(M,\beta)$ indicated by markers labeling each plot, which correspond to the markers in Figure \ref{fig04}.}
\label{fig05_v19}
\end{figure}

\begin{figure}[h!]
\centering
\begin{tikzpicture}
\node[anchor=north west] at (0,0) {\includegraphics[width=0.355\textwidth,trim=30 0 30 0]{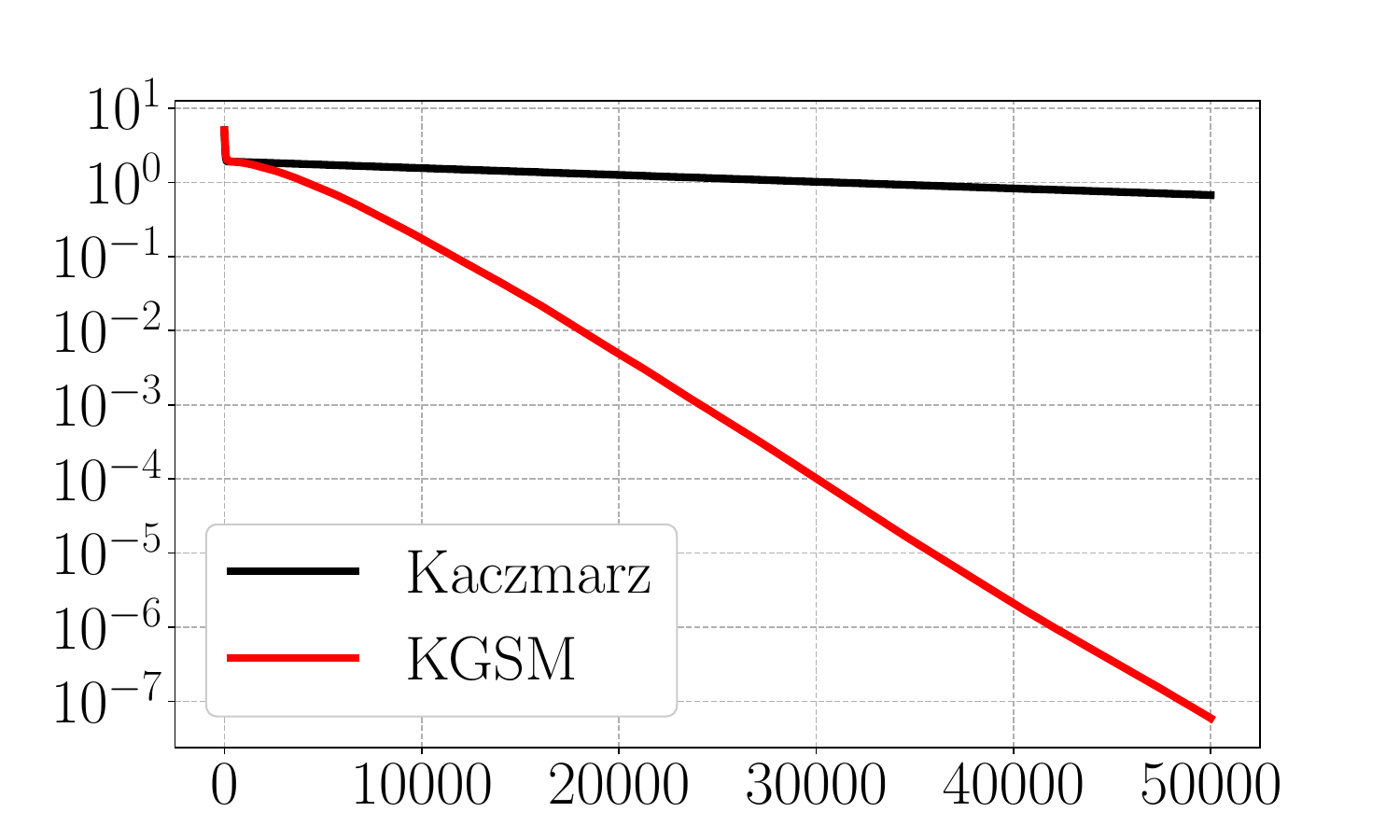}};
\node[anchor=north west] at (0.5\textwidth,0) {\includegraphics[width=0.355\textwidth,trim=30 0 30 0]{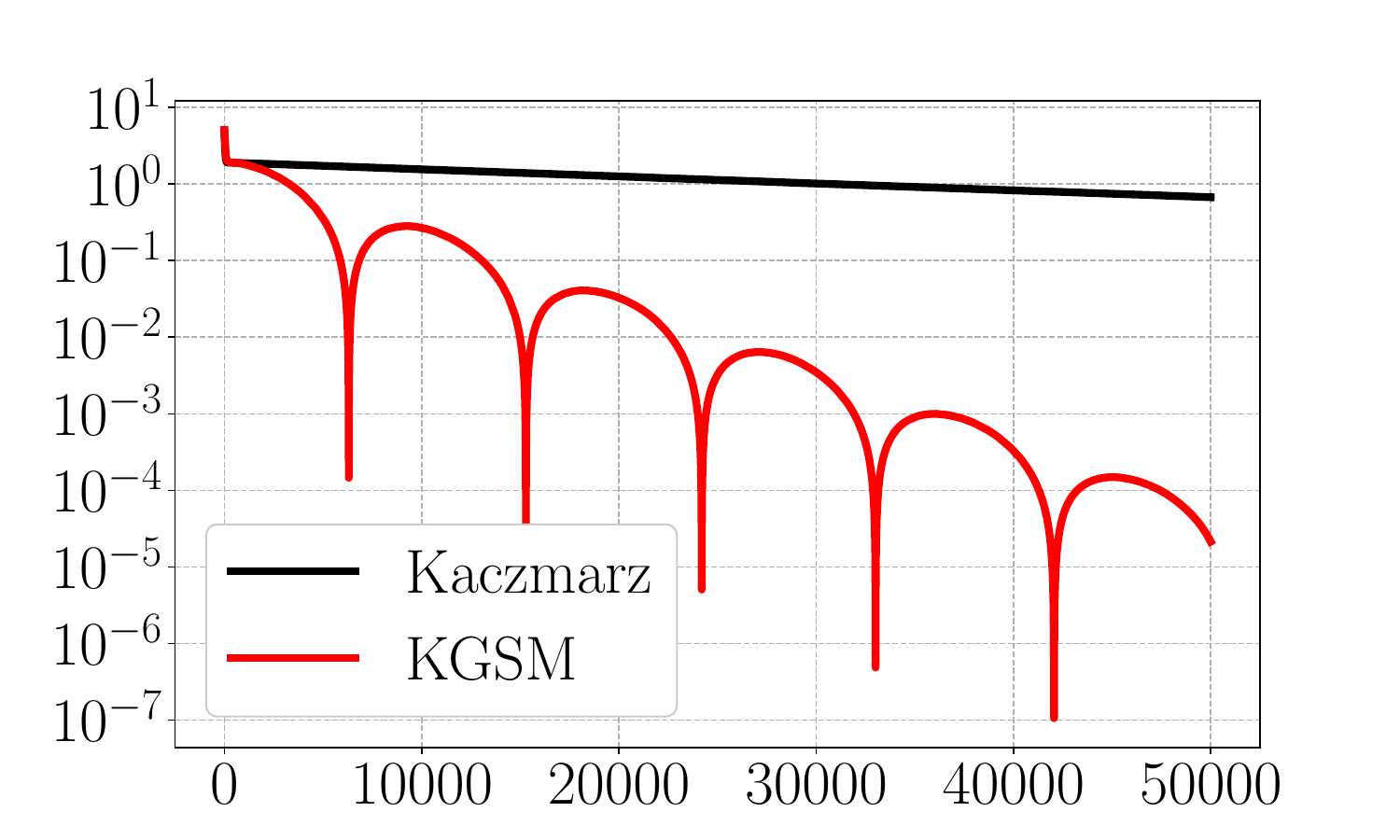}};
\node[anchor=north west] at (0,-0.3\textwidth) {\includegraphics[width=0.355\textwidth,trim=30 0 30 0]{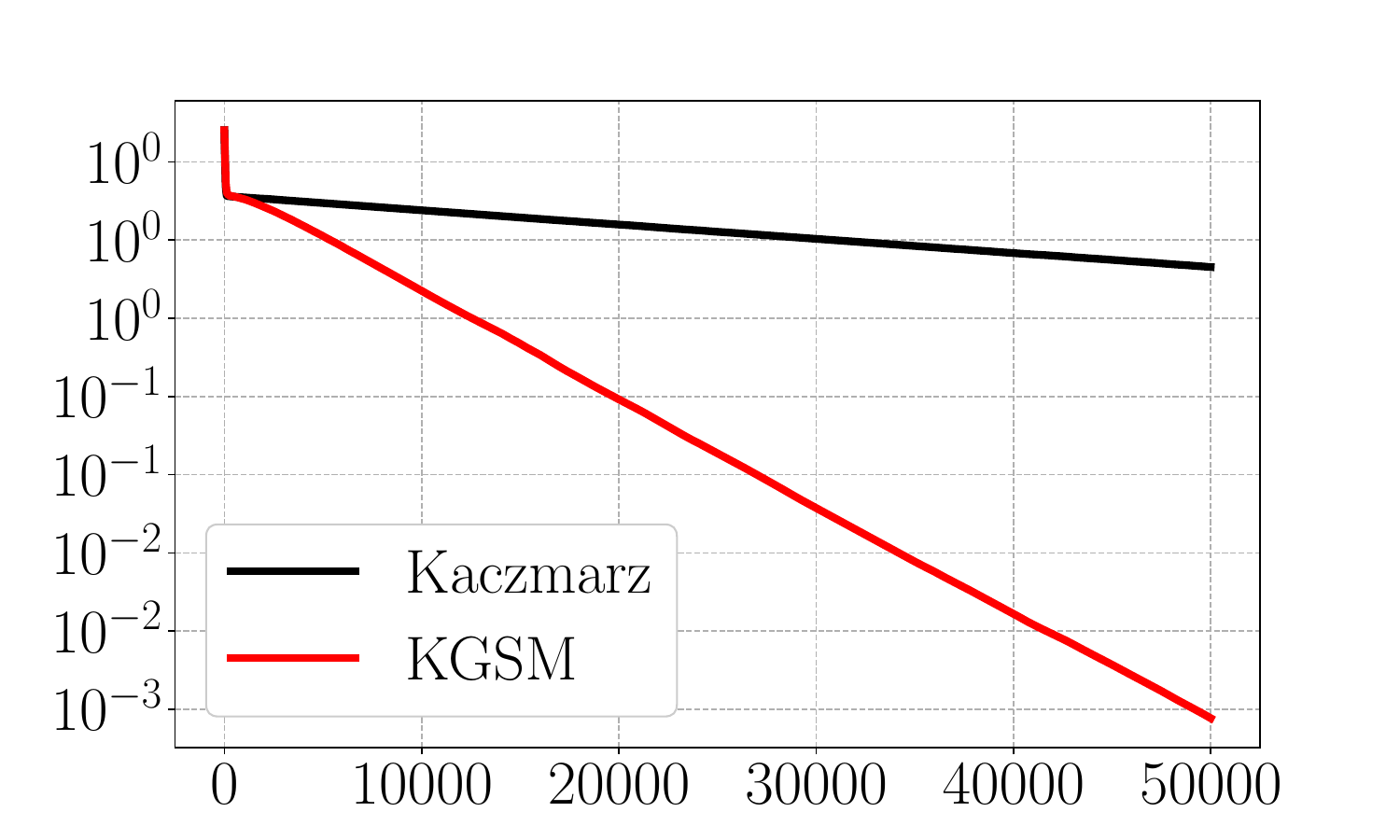}};
\node[anchor=north west] at (0.5\textwidth,-0.3\textwidth) {\includegraphics[width=0.355\textwidth,trim=30 0 30 0]{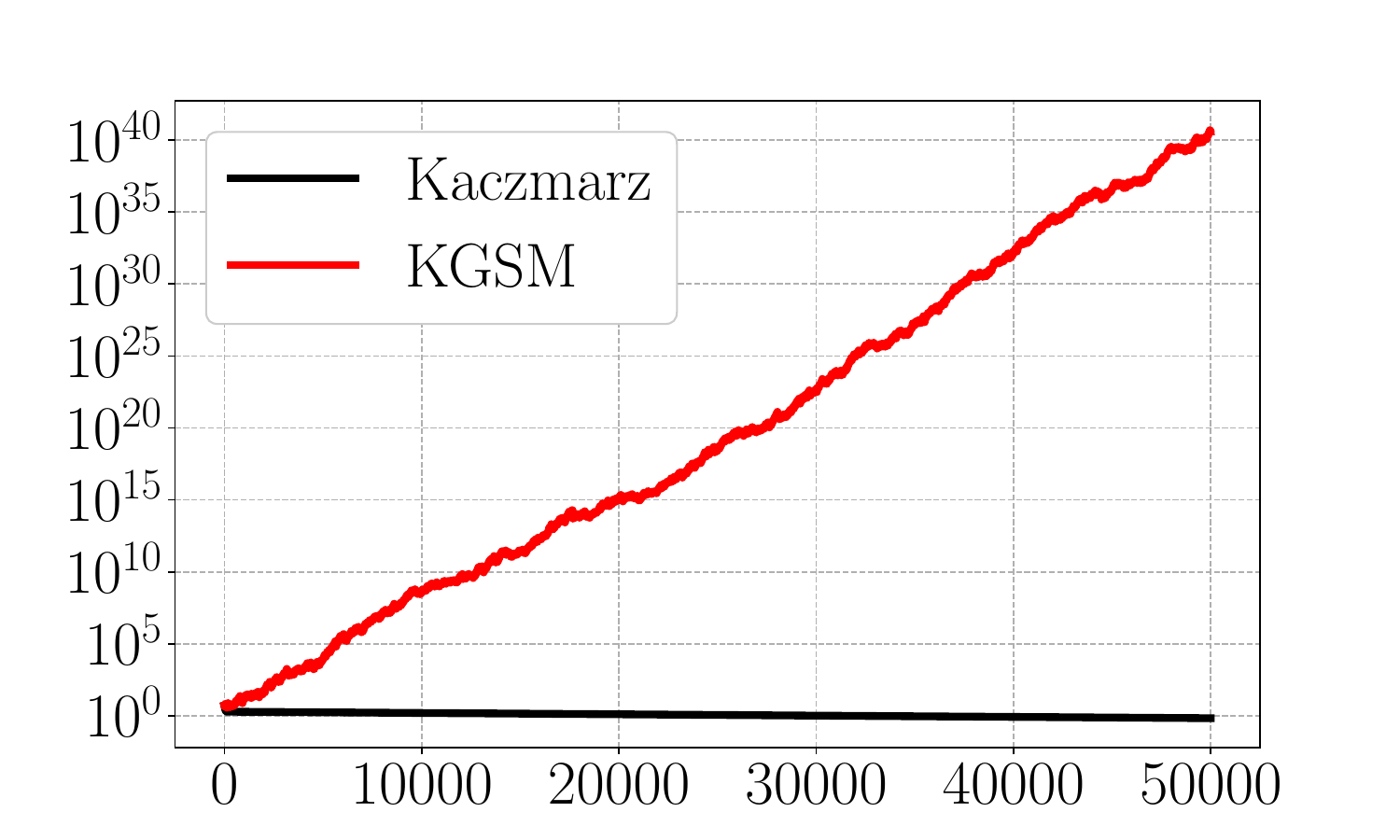}};

\node[draw=black, circle, fill=black, inner sep=0pt, minimum size=3mm] at (-0.01\textwidth, -0.05\textwidth) {};

\definecolor{mygreen}{HTML}{008000}
\begin{scope}[shift={(0.49\textwidth, -0.05\textwidth)}]
    \draw [mygreen, line width=1.3mm] (-0.15,0) -- (0.15,0); 
    \draw [mygreen, line width=1.3mm] (0,-0.15) -- (0,0.15); 
\end{scope}

\definecolor{myred}{HTML}{FF0000}
\node[draw=myred, rectangle, fill=myred, inner sep=0pt, minimum size=3mm] at (-0.01\textwidth, -0.35\textwidth) {};

\definecolor{myblue}{HTML}{0000FF}
\draw[draw=myblue, fill=myblue] (0.48\textwidth, -0.35\textwidth) -- ++(60:3mm) -- ++(-60:3mm) -- cycle;
\end{tikzpicture}
\caption{The error $\|x-x_k\|_2$  for randomized Kaczmarz \eqref{kaczmarz} and KGSM \eqref{eq:our-method2} for parameters $(M,\beta)$ indicated by markers labeling each plot which correspond to the markers in Figure \ref{fig04}.}
\label{fig05_l2}
\end{figure}

Observe that the error in the direction of $v_{19}$ appears noisy, while the $\ell^2$ error seems to be noise-free and roughly matches $|\langle x_k - x, v_{20} \rangle|$. Roughly speaking, the noise in the direction $v_{19}$ is due to the fact that the error convergences rapidly in this direction but is subject to perturbations of magnitude on the order of $|\langle x_k -x,v_{20}\rangle|$ as the algorithm runs. Finally we note that since $v_1,v_2,\cdots, v_{19}$ have the same singular value $\sigma_1=\cdots = \sigma_{19}=1$, plots of the error in the directions $v_1,\ldots,v_{18}$ all appear similar to those of $v_{19}$ in Figure \ref{fig05_l2}.
\newpage
\end{appendix}

\end{document}